\numberwithin{equation}{section}
\theoremstyle{plain}
\newtheorem{prop}{Proposition}[section]
\newtheorem{theo}[prop]{Theorem}
\newtheorem{lemm}[prop]{Lemma}
\theoremstyle{definition}
\numberwithin{equation}{section}
\newcommand{\bA}{\mathbb A}
\newcommand{\bC}{\mathbb C}
\newcommand{\bP}{\mathbb P}
\newcommand{\bQ}{\mathbb Q}
\newcommand{\bZ}{\mathbb Z}
\newcommand{\bR}{\mathbb R}
\newcommand{\bF}{\mathbb F}
\newcommand{\R}{\mathbb R}
\newcommand{\Z}{\mathbb Z}
\newcommand{\cO}{\mathcal O}
\newcommand{\sH}{\mathsf H}
\newcommand{\sZ}{\mathsf Z}
\newcommand{\Pic}{{\rm Pic}}
\newcommand{\Val}{{\rm Val}}
\newcommand{\PGL}{{\rm PGL}}
\newcommand{\mathbs}{{\mathbf s}}
\newcommand{\ch}{\mathrm{ch}\,}
\begin{document}
\title[Equivariant compactifications of $\mathrm{PGL}_2$]
{Distribution of Rational points of bounded height on equivariant compactifications of $\mathrm{PGL}_2$ I}

\author{Ramin Takloo-Bighash}
\address{Department of Mathematics\\
UIC, 851 S. Morgan Str\\
Chicago, IL 60607\\
USA}
\email{rtakloo@math.uic.edu}
\author{Sho Tanimoto}
\address{Department of Mathematical Sciences\\
University of Copenhagen\\
Universitetspark 5\\
2100 Copenhagen $\emptyset$\\
Denmark}
\email{sho@math.ku.dk}

\date{\today}

\begin{abstract}
We study the distribution of rational points of bounded height on a one-sided equivariant compactification of $\PGL_2$ using automorphic representation theory of $\PGL_2$.
\end{abstract}

\maketitle


\section{Introduction}

A driving problem in Diophantine geometry is to find asymptotic formulae for the number of rational points on a projective variety $X$ with respect to {\it a height function}. In \cite{batyrev-manin}, Batyrev and Manin formulated a conjecture relating the generic distribution of rational points of bounded height to certain geometric invariants on the underlying varieties. This conjecture has stimulated several research directions and has lead to the development of tools in analytic number theory, spectral theory, and ergodic theory. Although the strongest form of Manin's conjecture is known to be false (e.g., \cite{bt-exam}, \cite{BL13}, and \cite{LR15}), there are no counterexamples of Manin's conjecture in the class of {\it equivariant compactifications of homogeneous spaces} whose stabilizers are connected subgroups.

There are mainly two approaches to the study of the distribution of rational points on equivariant compactifications of homogeneous spaces. One is the method of ergodic theory and mixing, (e.g., \cite{GMO08}, \cite{GO11}, and \cite{GTBT11}), which posits that the counting function of rational points of bounded height should be approximated by the volume function of height balls. This method has been successfully applied to prove Manin's conjecture for wonderful compactifications of semisimple groups. The other approach is the method of height zeta functions and spectral theory (e.g., \cite{STBT} and \cite{PGL2} as well as \cite{BT98}, \cite{BT-general}, \cite{VecIII}, and \cite{ST15}); this method solves cases of toric varieties, equivariant compactifications of vector groups, biequivariant compactifications of unipotent groups, and wonderful compactifications of semisimple groups.

In all of these results, one works with a compactification $X$ of a group $G$ that are bi-equivariant, i.e.  the right and left action of $G$ on itself by multiplication extends to $X$. The study of one-sided equivariant compactifications remains largely open, and the only result in this direction is \cite{TT12} which treats the case of the $ax+b$-group under some technical conditions.

From a geometric point of view, one-sided equivariant compactifications of reductive groups are different from bi-equivariant compactifications, and their birational geometry is more complicated. For example, in the case of bi-equivariant compactifications of reductive groups, the cone of effective divisors is generated by boundary components. In particular, when reductive groups have no character, the cone of effective divisors is a simplicial cone. However, this feature is absent for one-sided equivariant compactifications of reductive groups, and one can have more complicated cones for these classes of varieties. This has a serious impact on the analysis of rational points.

In all previous cases where spectral theory or ergodic theory are applied, the main term of the asymptotic formula for the counting function associated to the anticanonical class arises from the trivial representation component of the spectral expansion of the height zeta function, assuming  that the group considered has no character. The trivial representation component has been studied by Chambert-Loir and Tschinkel in \cite{volume}. They showed that when the cone of effective divisors is generated by boundary components, the trivial representation component coincides with Manin's prediction. However, if the cone of effective divisors is not generated by boundary components, then the trivial representation component does not suffice to account for the main term of the height zeta function. 

In this paper, we study a one-sided equivariant compactification of $\PGL_2$ whose cone of effective divisors is not generated by boundary components. We use the height zeta functions method and automorphic representation theory of $\PGL_2$. A new feature is that for the height function associated with the anti-canonical class, the main pole of the zeta function comes not from the trivial representation, but from constant terms of Eisenstein series; indeed, the contribution of the trivial representation is cancelled by a certain residue of Eisenstein series, c.f. \S \ref{sect:eisenstein} for details. In particular, it would appear as though ergodic theory methods cannot shed light on this situation, as, at least in principle, these methods only study the contribution of one-dimensional representations.

Let us express our main result in qualitative terms:
\begin{theo}\label{mainthm}
Let $X$ be the blow up of $\bP^3$ along a line defined over $\bQ$.
The variety $X$ satisfies Manin's conjecture, with Peyre's constant, for any big line bundle over $\bQ$. 
\end{theo}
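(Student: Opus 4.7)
The plan is to execute the height zeta function method employed elsewhere in this paper, applied to the one-sided compactification $X = \mathrm{Bl}_L \bP^3$. First I would make the geometry explicit: $X$ is an equivariant compactification of $G = \PGL_2$ coming from the action on $\bP^3 = \bP(M_2)$ by left multiplication, with the boundary quadric $\{\det = 0\}$ ruled by the $G$-invariant ``kernel lines'' of which $L$ is one. Then $\Pic(X) = \bZ H \oplus \bZ E$ with $H$ the pullback of the hyperplane class and $E$ the exceptional divisor, $-K_X = 4H - E$, and the boundary decomposes as $E \cup \widetilde Q$ with classes $E$ and $2H - E$. The pencil of planes containing $L$ gives a morphism $X \to \bP^1$ whose fibres have class $H - E$, showing that $H-E$ is also extremal effective; the effective cone is therefore $\mathrm{Cone}(E, H-E)$, strictly larger than the boundary cone $\mathrm{Cone}(E, 2H-E)$, and this rigid non-boundary ray is the geometric source of the difficulty flagged in the introduction.

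Having fixed a big line bundle $\cL$, I would introduce a smooth adelic height $\sH(\cdot,\mathbf s)$ depending on complex parameters pairing against $H$ and $E$, and form the height zeta function $Z(\mathbf s) = \sum_{\gamma \in G(\bQ)} \sH(\gamma,\mathbf s)^{-1}$. Viewed as a $G(\bQ)$-invariant function on $G(\bA)$, Plancherel decomposition for $\PGL_2$ splits it as
\[
 Z(\mathbf s) = Z_{\mathrm{triv}}(\mathbf s) + Z_{\mathrm{cusp}}(\mathbf s) + Z_{\mathrm{Eis}}(\mathbf s),
\]
the projections onto the trivial representation, the cuspidal spectrum, and the continuous Eisenstein spectrum respectively. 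The cuspidal term should extend holomorphically past the critical line by bounds on matrix coefficients and Rankin--Selberg type estimates. The trivial part is an Euler product of local integrals of $\sH^{-1}$ against Tamagawa measure, analysed in the framework of Chambert-Loir--Tschinkel \cite{volume}; because the effective cone is strictly larger than the boundary cone, the rightmost pole produced by this term sits strictly to the left of the Manin abscissa $a(\cL)$ and alone cannot account for the predicted asymptotic.

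The crux, and the main obstacle, is the Eisenstein piece. Writing $Z_{\mathrm{Eis}}$ as a contour integral of $\PGL_2$-Eisenstein series paired against the local height data and shifting the contour, the constant terms contribute a residue whose pole exactly cancels that of $Z_{\mathrm{triv}}$, as anticipated in the introduction; a further residue arising from the $H-E$ direction then supplies the genuine leading term, with pole at $s = a(\cL)$ of order $b(\cL)$. Pinning this down requires explicit local computations of the induced principal series paired against the height, together with a global assembly in terms of Dedekind and Hecke zeta factors. The resulting leading coefficient must then be identified with Peyre's constant $\alpha(\cL)\beta(\cL)\tau(\cL)$, using the two-generator description of the effective cone for $\alpha$, Galois invariants of $\Pic$ for $\beta$, and the Tamagawa measure built from the adelic height for $\tau$. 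A standard Tauberian theorem of Delange type then converts the pole data into the asymptotic for the counting function, yielding Manin's prediction with Peyre's constant for every big line bundle.
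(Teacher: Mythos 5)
Your geometric setup and the broad spectral strategy match the paper: you correctly identify $\Pic(X)$, the anticanonical class $-K_X = 4H - E$, the boundary classes $E$ and $2H-E$, and the effective cone $\mathrm{Cone}(E, H-E)$, and you correctly recognise that the trivial-representation Euler product cannot by itself give Peyre's constant and that the constant term of the Eisenstein series must step in. Your claim that the trivial part's pole ``sits strictly to the left'' of $a(\mathcal L)$ is, however, imprecise: on the anticanonical line $s=2w$ the trivial piece $\frac{\Lambda(s-1)\Lambda(w)}{\Lambda(s+w)}$ in fact has a double pole at $w=1$, i.e.\ at the correct abscissa and of the correct order; the problem is rather that its leading coefficient is wrong, and the paper shows that a residue of the Eisenstein constant term at $y=1/2$ cancels this contribution \emph{exactly}, with the genuine main term supplied by the residues at $y=s-3/2$ and $y=s-1/2$.

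The serious gap is in your treatment of the cuspidal spectrum. You propose to control $Z_{\mathrm{cusp}}$ via ``bounds on matrix coefficients and Rankin--Selberg type estimates.'' But matrix-coefficient bounds require the height to be bi-$K$-invariant: one writes the inner integral as a convolution against $\int_K \phi(kg)\,dk$, which collapses to a spherical matrix coefficient. The height functions arising here are only right-$K$-invariant (the line $l$ is fixed under the right action, not the left), so this mechanism simply does not apply; the integrals $\int_{G(\bA)} \sH(g,\mathbf s)^{-1}\phi(g)\,dg$ do not reduce to spherical functions. This is exactly the obstruction the paper is designed to overcome, and its central technical innovation is to replace matrix coefficients by Whittaker models: expand $\phi$ in its automorphic Fourier series $\phi(g)=\sum_{\alpha\in\bQ^\times} W_\phi\big(\begin{smallmatrix}\alpha & \\ & 1\end{smallmatrix} g\big)$, exploit the Euler product of $W_\phi$, and then bound the resulting local integrals $J_{\pi_v}(\alpha)$ using explicit formulas for unramified Whittaker functions together with an approximation to the Ramanujan conjecture. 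Without this replacement, the convergence of $Z_{\mathrm{cusp}}$ past the abscissa of absolute convergence has no supporting argument.

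A smaller but still substantive omission is the verification of Peyre's constant for \emph{all} big line bundles. For classes $L = x\tilde D + y E$ off the anticanonical ray, the order-one pole can arise either because $a(L)L + K_X$ is a rigid effective divisor (proportional to $E$) or because it is semiample and defines the conic-bundle fibration $X\to\bP^1$ (proportional to $\tilde D - E = 2(H-E)$). These two regimes require different Tamagawa-number formalisms in the sense of Batyrev--Tschinkel, and in the non-rigid case the leading constant is a sum over the fibres of $X\to\bP^1$ which must be re-identified with the Eisenstein series $E(s,e)$. Your outline elides this dichotomy, which is where most of the work in matching the residue to Peyre's prediction actually lives.
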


The blow up of $\bP^3$ along a line is a toric variety and an equivariant compactification of a vector group and thus our result is covered by previous works on Manin's conjecture. However, our proof is new in the sense that we explicitly used the structure of one-sided equivariant compactifications of $\PGL_2$, but not biequivariant, and we develop a method using automorphic representation theory of $\PGL_2$, which can be applied to more general examples of equivariant compactifications of $\PGL_2$, some of which are not covered by previous works. 

Let us describe our result more precisely.  Consider the following equivariant compactification of $G=\PGL_2$:
$$
\PGL_2 \ni \begin{pmatrix} a&b\\c&d \end{pmatrix} \mapsto (a:b:c:d) \in \bP^3.
$$
The boundary divisor $D$ is a quadric surface defined by $ad - bc  = 0$. We consider the line $l$ on $D$ defined by
$$
\begin{pmatrix} 0&1 \end{pmatrix}\begin{pmatrix} a&b\\c&d \end{pmatrix} = 0.
$$
We let $X$ be the blowup of $\bP^3$ along the line $l$. 
Note that the left action of $\PGL_2$ on $\bP^3$ acts transitively on lines in the ruling of $l$,
so geometrically their blow ups are isomorphic to each other.
The smooth projective threefold $X$ is an equivariant compactification of $\PGL_2$ over $\bQ$ and the natural right action on $\PGL_2$ extends to $X$. 
The variety $X$ extends to a smooth projective scheme over $\mathrm{Spec}(\bZ)$ and the action of $\PGL_2$ also naturally extends to this integral model.
We let $U$ be the open set consisting of the image of $\PGL_2$ in $X$. 

We denote the strict transformation of $D$ by $\tilde{D}$, and the exceptional divisor by $E$. The boundary divisors $\tilde{D}$ and $E$ generate $\Pic (X)_{\bQ}$, however, the boundary divisors do not generate the cone of effective divisors $\Lambda_{\rm{eff}}(X)$. The cone of effective divisors is generated by $E$ and $P = \frac{1}{2}\tilde{D} - \frac{1}{2}E$ which corresponds to the projection to $\bP^1$.

Let $F$ be a number field. For an archimedean place $v\in \rm{Val}(F)$, the height functions are defined by
$$
\mathsf H_{E, v}(a,b,c,d) = \frac{\sqrt{|a|_v^2 + |b|_v^2 + |c|_v^2 + |d|_v^2}}{\sqrt{|c|_v^2 + |d|_v^2}},
$$
$$
\mathsf H_{\tilde{D}, v}(a,b,c,d) = \frac{\sqrt{|a|_v^2 + |b|_v^2 + |c|_v^2 + |d|_v^2}\sqrt{|c|_v^2 + |d|_v^2}}{|ad-bc|_v},
$$
For a non-archimedean place $v \in \rm{Val}(F)$, we have
$$
\mathsf H_{E, v}(a,b,c,d) = \frac{\max \{ |a|_v, |b|_v, |c|_v, |d|_v\}}{\max \{|c|_v, |d|_v\}},
$$
$$
\mathsf H_{\tilde{D}, v}(a,b,c,d) = \frac{\max \{ |a|_v, |b|_v, |c|_v, |d|_v\}\max \{|c|_v, |d|_v\}}{|ad-bc|_v},
$$

Thus the local height pairing is given by
$$
\mathsf H_v(g, (s, w) ) = \mathsf H_{\tilde{D},v}(g)^{s} \mathsf H_{E,v}(g)^{w} 
$$
For ease of reference, we let 
$$
\mathsf H_1(g) = \max \{|c|_v, |d|_v\}
$$
and 
$$
\mathsf H_2(g) = \max \{ |a|_v, |b|_v, |c|_v, |d|_v\}. 
$$
The complexified height function: 
$$
\mathsf H(g, \mathbs)^{-1} = \mathsf H_1(g)^{w-s} \mathsf H_2(g)^{-s-w} |\det g|^{s}. 
$$

\

The global height paring is given by
$$
\mathsf H(g, \mathbf s) = \prod_{v\in \rm{Val}(F)} \mathsf H_v(g, \mathbf s) : G(\bA_F)\times \Pic(X)_{\bC} \rightarrow \bC^{\times}
$$
The anti-canonical class $-K_X$ is equal to $2 \tilde{D} + E$, and as such we have 
$$
\mathsf H_{-K_X}(g) =  \mathsf H_{\tilde{D},v}(g)^{2} \mathsf H_{E,v}(g). 
$$
A more precise version of the theorem is the following: 
\begin{theo}\label{mainthm2}
Let $C$ be a real number defined by 
$$
\zeta(3) C = 5 \gamma - 3 \log 2 + \frac{3}{4} \log \pi - \log \Gamma\left(\frac{1}{4}\right) - \frac{24}{\pi^2} \zeta'(2) - \frac{\zeta'(3)}{\zeta(3)} - 4. 
$$
Then there is an $\eta >0$ such that as $B \to \infty$, 
$$
\# \{ \gamma \in U(\bQ) \mid \mathsf H_{-K_X}(\gamma) < B \} = \frac{1}{\zeta(3)} B (\log B) + C B + O(B^{1-\eta}).
$$
\end{theo}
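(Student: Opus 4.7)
The plan is to study the height zeta function
\begin{equation*}
Z(\mathbf{s}) \;=\; \sum_{\gamma \in \PGL_2(\mathbb{Q})} \mathsf{H}(\gamma, \mathbf{s})^{-1},
\end{equation*}
and to read off the counting asymptotic from its analytic properties via a Tauberian theorem. Since $U(\mathbb{Q}) = \PGL_2(\mathbb{Q})$ and $\mathsf{H}$ is built adelically, one views $\mathsf{H}(\cdot,\mathbf{s})^{-1}$ as an automorphic function on $\PGL_2(\mathbb{Q})\backslash\PGL_2(\mathbb{A})$. A first step is to check, by explicit computation via the Iwasawa decomposition using the formulae for $\mathsf{H}_{\tilde D,v}$ and $\mathsf{H}_{E,v}$, that the local Mellin transforms against unramified characters of the Borel subgroup converge absolutely in a suitable open cone of $\mathbf{s}$, so that $Z(\mathbf{s})$ converges there and inherits the spectral decomposition of $L^2(\PGL_2(\mathbb{Q})\backslash\PGL_2(\mathbb{A}))$.

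The spectral decomposition breaks $Z(\mathbf{s})$ into contributions from the trivial representation, the cuspidal spectrum, and the continuous (Eisenstein) spectrum. The trivial representation piece is an Euler product of local integrals $\int_{\PGL_2(\mathbb{Q}_v)}\mathsf{H}_v(g,\mathbf{s})^{-1}\,dg$ which, after Iwasawa, factor into Tate-type integrals and can be written in closed form as a product of shifted $\zeta$-factors with holomorphic correction. The cuspidal contribution, using bounds on matrix coefficients of cuspidal representations together with Sobolev-type estimates for $\mathsf{H}(\cdot,\mathbf{s})^{-1}$, should be holomorphic in a half-plane strictly to the right of the anti-canonical point, and thus feeds only into the error term.

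The core and most delicate step is the continuous spectrum contribution. One computes
\begin{equation*}
\int_{\PGL_2(\mathbb{A})} \mathsf{H}(g,\mathbf{s})^{-1} E(g, z, \phi)\, dg
\end{equation*}
for Eisenstein sections $\phi$, expresses the result in terms of local intertwining integrals (Gindikin--Karpelevich at the finite places, an explicit $\Gamma$-integral at the archimedean place), and then shifts the contour in $z$ across the pole of $E(\cdot, z)$. The key cancellation stressed in the introduction arises here: the residue picked up at $z=1/2$ annihilates the trivial representation contribution, and the main pole of $Z$ at $\mathbf{s} = -K_X$ is produced instead by the Eisenstein constant term on the shifted contour together with the intertwining operator acting on the local heights. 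One expects a double pole at the anti-canonical point with leading residue $\zeta(3)^{-1}$. This is where I foresee the main obstacle, since both the exact cancellation and the identification of the leading coefficient demand handling the intertwining operator globally and matching its poles against those produced by the trivial representation.

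Finally, setting $Z_{-K_X}(s) = Z(2s, s)$ (consistent with $-K_X = 2\tilde D + E$), a standard Tauberian theorem extracts the counting asymptotic from a Laurent expansion of the form
\begin{equation*}
Z_{-K_X}(s) \;=\; \frac{1}{\zeta(3)(s-1)^2} + \frac{C}{s-1} + \text{holomorphic at }s=1.
\end{equation*}
The error term $O(B^{1-\eta})$ follows by shifting the contour slightly to the left of $\Re(s)=1$ and invoking convexity bounds for $\zeta$ and its derivatives, together with the polynomial control on the cuspidal and continuous contributions. Pinning down the explicit constant $C$ is a substantial separate task: one must expand the Eisenstein contribution to second order around the pole, tracking the Euler--Mascheroni constant from $\zeta$, the terms $\log\Gamma(1/4)$ and $\log\pi$ from the archimedean intertwining $\Gamma$-factors at $z=1/2$, and $\zeta'(2)$ and $\zeta'(3)$ from logarithmic derivatives of the finite Euler product, then checking that the resulting combination matches the stated formula. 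The analytic architecture is, in principle, routine once the Eisenstein cancellation is in hand; the bulk of the remaining work is this explicit bookkeeping.
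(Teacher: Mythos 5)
Your outline is broadly aligned with the paper's spectral strategy, and your identification of the Eisenstein cancellation against the trivial representation as the key phenomenon is correct. The genuine gap is in the cuspidal spectrum. You propose bounding $\langle \sZ(\mathbs, \cdot), \phi_\pi\rangle$ via matrix coefficient estimates and Sobolev bounds, but that route is exactly what is unavailable here: the local heights $\sH_1, \sH_2$ are right $K_v$-invariant but \emph{not} left $K_v$-invariant (left multiplication by $K_v = G(\cO_v)$ mixes the rows of $g$, so neither $\max\{|c|_v,|d|_v\}$ nor $\max\{|a|_v,|b|_v,|c|_v,|d|_v\}$ is preserved), so one cannot reduce the pairing to spherical functions or matrix coefficients as in \cite{PGL2}. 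The paper's central new idea replaces this by the automorphic Fourier expansion of $\phi_\pi$ in terms of Whittaker functions, computes the resulting local integrals $J_{\pi_v}(\alpha)$ explicitly in the Satake parameter, and in this way extracts the local $L$-factor $L(s-1/2,\pi_v)$; combined with the Luo--Rudnick--Sarnak approximation to the Ramanujan conjecture, $q_v^{-\delta} < |\chi(\varpi)| < q_v^{\delta}$ with $\delta < 1/2$, this places the abscissa of holomorphy of the cuspidal contribution at $\Re s > 3/2 + \delta$, strictly left of the anticanonical line near $w=1$. Without some such nontrivial input, your claim that the cuspidal piece is holomorphic near the anticanonical point has no proof; with only the trivial bound $\delta = 1/2$, the cuspidal part could in principle carry a pole right at $w=1$.

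Two smaller corrections on the constant $C$. The term $\log\Gamma(1/4)$ does not arise from archimedean intertwining $\Gamma$-factors; it enters through the Kronecker Limit Formula applied to $E(s,e)$ near $s = 1/2$, via the identity $\eta(i) = \Gamma(1/4)/(2\pi^{3/4})$ for the Dedekind eta function. And the exact cancellation of the trivial representation with the residue picked up at $y = 1/2$ when shifting the Eisenstein contour is not automatic from the contour shift alone: it uses the Langlands identity ${\rm Res}_{y = 1/2} E(y, e) = \mathrm{vol}(G(\bQ)\backslash G(\bA))^{-1}$, which you should record as an explicit ingredient rather than take for granted.
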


We will show in \S \ref{sect:peyre} that this is indeed compatible with the conjecture of Peyre \cite{peyre95}. 

\

Our method is based on the spectral analysis of the height zeta function given by 
$$
\mathsf Z(s, w) = \sum_{\gamma \in G(F) } \mathsf H(\gamma, s, w)^{-1}.
$$
Namely, for $g \in \PGL_2(\bA)$, we let 
$$
\mathsf Z(g; s, w) = \sum_{\gamma \in G(F) } \mathsf H(\gamma g; s, w)^{-1}.
$$
For $\Re s, \Re w$ large, $\mathsf Z(.; s, w)$ is in $L^2(\PGL_2(\bQ) \backslash \PGL_2(\bA))$ and is continuous on $\PGL_2(\bA)$. We will then use the spectral theory of automorphic functions to analytically continue $\mathsf Z$ to a large domain.  The main result will be a corollary of the following general statement: 

\begin{theo}\label{mainthm3} Our height zeta function has the following decomposition:
$$
\sZ(s, w)
=   
\frac{\Lambda(s+w-2)}{\Lambda(s+ w)} E(s-3/2, e) 
-   \frac{\Lambda(s+w-1)}{\Lambda(s+ w)} E(s-1/2, e)
 + \Phi(s, w)
$$
with $\Phi(s, w)$ a function holomorphic for $\Re s > 2 - \epsilon$ and $\Re(s+ w) > 2$ for some $\epsilon >0$.
Here $\Lambda$ is the completed Riemann zeta function defined in \S \ref{zeta}, and $E(s, g)$ is the Eisenstein series defined in \S \ref{subsect:eisenstein}. 
\end{theo}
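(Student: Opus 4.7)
The plan is to perform a spectral expansion of $g \mapsto \sZ(g; s, w)$ in $L^2(\PGL_2(\bQ) \backslash \PGL_2(\bA))$, valid for $\Re s, \Re w$ sufficiently large, and then to analytically continue each piece. Since the spectrum of $\PGL_2$ decomposes as trivial, cuspidal, and continuous (Eisenstein), with no further residual component, evaluating at $g = e$ gives
\[
\sZ(s, w) \;=\; Z_{\mathrm{triv}}(s, w) + Z_{\mathrm{cusp}}(s, w) + Z_{\mathrm{Eis}}(s, w).
\]
For the trivial part, $Z_{\mathrm{triv}}(s, w) = \mathrm{vol}^{-1} \int_{\PGL_2(\bA)} \sH(g; s, w)^{-1}\, dg$: in Iwasawa coordinates $g = n(x) a(y) k$ one has $\sH_1(na) = 1$, $\sH_2(na) = \max(|y|, |x|, 1)_v$ non-archimedeanly (with the $L^2$ analogue archimedeanly), and $|\det(na)| = |y|$, so the integral factors as an Euler product of local Tate-type integrals that assemble into a ratio of completed $\Lambda$-functions. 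For the cuspidal part, rapid decay of cusp forms together with Weyl's law and standard bounds on sup-norms show that $\sum_\phi \langle \sZ, \phi\rangle \phi(e)$ converges absolutely and is holomorphic in a region strictly larger than the target domain; this contribution is absorbed into $\Phi(s, w)$.

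For the Eisenstein part, I would start from
\[
Z_{\mathrm{Eis}}(e; s, w) \;=\; \frac{1}{4\pi i} \int_{\Re \sigma = 0} K(s, w, \sigma)\, E(\sigma, e)\, d\sigma, \qquad K(s, w, \sigma) := \langle \sZ(\cdot; s, w), E(\sigma, \cdot) \rangle.
\]
Unfolding $E$ gives $K = \int_{B(\bQ) \backslash G(\bA)} \sZ(g)\, \overline{f_\sigma(g)}\, dg$, which after Iwasawa reduces to a Mellin transform in $y$ of the constant term $\int_{N(\bQ) \backslash N(\bA)} \sZ(n a(y))\, dn$ against $\delta_B^{1/2 - \sigma}$. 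The Bruhat decomposition $G(\bQ) = B(\bQ) \sqcup B(\bQ) w N(\bQ)$ splits this constant term into small-cell and big-cell pieces; the big cell yields, via the standard collapse $\sum_{u \in \bQ} \int_{\bQ \backslash \bA} = \int_\bA$, an adelic integral factorizing into local factors. Each local factor is evaluated explicitly (geometric-series analysis of max-norm integrals at finite places, a gamma-function computation at infinity), and the global Euler product is identified with an explicit ratio of completed $\Lambda$-functions in the three variables $s$, $w$, $\sigma$.

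Finally, I analytically continue $Z_{\mathrm{Eis}}$ by shifting the contour $\Re \sigma = 0$ leftwards across the poles of the integrand. Within the target region $\Re s > 2 - \epsilon$, $\Re(s+w) > 2$, two sources of residues contribute. First, poles of $K(s, w, \sigma)$ as a function of $\sigma$ at $\sigma = s - 3/2$ and $\sigma = s - 1/2$ produce residues of the form $\frac{\Lambda(s+w-2)}{\Lambda(s+w)} E(s - 3/2, e)$ and $-\frac{\Lambda(s+w-1)}{\Lambda(s+w)} E(s - 1/2, e)$, matching the two main terms. Second, the pole of $E(\sigma, e)$ itself at $\sigma = 1/2$ contributes a residue proportional to $K(s, w, 1/2)$, which on direct computation coincides with $Z_{\mathrm{triv}}(s, w)$; this is the cancellation between the trivial representation and the Eisenstein residue announced in the introduction. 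What remains after the contour shift, together with $Z_{\mathrm{cusp}}$, constitutes $\Phi(s, w)$ and is holomorphic in the claimed region. The principal technical obstacle is the explicit computation of $K(s, w, \sigma)$ as a ratio of $\Lambda$-functions: the max-norm factor $\sH_2$ does not interact cleanly with Iwasawa decomposition, so the local integrals require careful case analysis; more delicate still is the residue bookkeeping--matching the signs of the two main terms, and verifying the exact cancellation with $Z_{\mathrm{triv}}$--which rests on precise constants from the functional equation of $E(\sigma, e)$.
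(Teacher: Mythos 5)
Your overall skeleton is correct and matches the paper: spectral decomposition into trivial, cuspidal, and Eisenstein pieces; the trivial piece computed as an Euler product of local Tate-type integrals giving $\Lambda(s-1)\Lambda(w)/\Lambda(s+w)$; a contour shift in the Eisenstein part producing residues at $y=s-3/2$, $y=s-1/2$ (from poles of the pairing) and at $y=1/2$ (from the pole of $E$), with the latter cancelling the trivial piece via $\mathrm{Res}_{y=1/2}E(y,e)=\mathrm{vol}(G(\bQ)\backslash G(\bA))^{-1}$. You also correctly read off the two surviving main terms with the right signs. That is the paper's plan.

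The real gap is the cuspidal part, and it is not a small one---it is the central technical difficulty the paper is built around. You claim that ``rapid decay of cusp forms together with Weyl's law and standard bounds on sup-norms'' shows $\sum_\pi \langle\sZ,\phi_\pi\rangle\phi_\pi(e)$ is holomorphic in a region larger than the target domain. But the target domain $\Re s>2-\epsilon$, $\Re(s+w)>2$ contains points with $\Re w<0$, and there $\int_{G(\bA)}|\sH(g,\mathbs)^{-1}\phi_\pi(g)|\,dg$ diverges: the factor $\sH_{E}^{-w}$ blows up along the boundary divisor $E$ when $\Re w<0$. Rapid decay of $\phi_\pi$ in the cusp does not help, since the problematic region is near the finite boundary of the compactification, not at infinity in $G(\bQ)\backslash G(\bA)$. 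Moreover, because the height is right-$K$-invariant but \emph{not} bi-$K$-invariant, the standard matrix-coefficient device of \cite{PGL2} (average $\phi_\pi$ over $K$ to get a spherical function, then invoke Ramanujan-type bounds) is unavailable. The paper's way around both problems is the adelic Whittaker expansion $\phi_\pi(g)=\sum_{\alpha\in\bQ^\times}W_{\phi_\pi}\bigl(\begin{smallmatrix}\alpha&\\&1\end{smallmatrix}g\bigr)$ combined with iterated integration by parts in $x$: each application of $\partial/\partial x$ pulls out a factor $\alpha^{-1}$, and together with the rapid decay of the Bessel function $K_\mu$ this yields both convergence of the sum over $\alpha$ and analytic continuation across $\Re w=0$ down to $\Re(s+w)>2$. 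Your argument, as stated, would at best give holomorphy roughly in $\Re s>1$, $\Re w>0$, which does not contain the target region.

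A related issue affects your Eisenstein computation. You assert that after unfolding $E$ and Bruhat-decomposing the constant term of $\sZ$, the big cell ``yields, via the standard collapse $\sum_{u\in\bQ}\int_{\bQ\backslash\bA}=\int_\bA$, an adelic integral factorizing into local factors.'' The collapse over the $N(\bQ)$-coset representative $u$ is fine, but the big cell carries a further sum over $B(\bQ)=N(\bQ)A(\bQ)$, and while the $A(\bQ)$ sum merges with the Mellin transform, the $N(\bQ)$ sum does \emph{not} collapse (the local heights are not left-$N(\bQ)$-invariant). That leftover $\sum_{n_1\in N(\bQ)}$ is a theta-type sum, not an Euler product; handling it again requires a Fourier/Whittaker expansion, which is exactly what the paper does by separating $E=E_{\mathrm{c}}+E_{\mathrm{nc}}$ and treating $E_{\mathrm{nc}}$ via its Whittaker coefficients. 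So $K(s,w,\sigma)$ is \emph{not} simply a ratio of $\Lambda$-functions; it is one explicit ratio (the paper's $I_{\mathrm{c}}$) plus a Dirichlet-series remainder that must be shown holomorphic. Finally, a minor point: with your sign conventions the three poles $\sigma=s-3/2$, $s-1/2$, $1/2$ all lie to the right of $\Re\sigma=0$ in the region $\Re s>3/2$, so the contour should be pushed rightward, not leftward as you wrote.
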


\

As in previous works, the proof of the theorem is based on the the spectral decomposition theorem proved in \cite{PGL2} and some approximation of Ramanujan conjecture \cite{Sarnak}. But a new idea is needed here. Recall that the method of \cite{PGL2} is based on the analysis of matrix coefficients--what facilitates this is the fact that the height functions considered there are bi-$K$-invariant. Namely, we need to find bounds for integrals of the form 
\begin{equation}\label{height-cusp}
\int_{G(\bA)} \sH(g, s)^{-1} \phi(g) \, \mathrm dg 
\end{equation}
with $\phi(g)$ a cusp form.  If $\sH$ is left-$K$-invariant, then we may write the integral as 
$$
\frac{1}{{\text{ vol }}K} \int_{G(\bA)} \sH(g, s)^{-1} \int_K\phi(kg) \, \mathrm dk \, \mathrm dg. 
$$
The function $g \mapsto \int_K\phi(kg) \, \mathrm dk$ is roughly a linear combination of products of local spherical functions coming from various local components of automorphic representations.  Approximations to the Ramanujan conjecture give us bounds for spherical functions, and this in turn gives rise to appropriate bounds for our integrals. 

\

As mentioned above, the height functions we consider here are not bi-$K$-invariant.   We use representation theoretic versions of Whittaker functions, which are the adelic analogues of Fourier expansions of holomorphic modular forms.   The idea is to write 
$$
\phi(g) = \sum_{\alpha \in \bQ^\times} W_\phi\left( \begin{pmatrix} \alpha \\ & 1 \end{pmatrix} g \right)
$$
with 
$$
W_\phi(g) = \int_{\bQ \backslash \bA} \phi\left(\begin{pmatrix} 1 & x \\ & 1 \end{pmatrix} g \right) \psi^{-1}(x) \, \mathrm dx 
$$
with $\psi: \bA \to \bC^\times$ the standard non-trivial additive character. 
Using these Whittaker functions we can write the integral \eqref{height-cusp} as the infinite sum 
$$
\sum_{\alpha \in \bQ^\times} \int_{G(\bA)} \sH(g, s)^{-1}W_\phi\left( \begin{pmatrix} \alpha \\ & 1 \end{pmatrix} g \right) \, \mathrm dg 
$$
Whittaker functions are Euler products, and there are explicit formulae for the local components of these functions expressing their values in terms of the Satake parameters of local representations. Again, approximations to the Ramanujan conjecture are used to estimate these local functions. 

Even though for the sake of clarity we state our results over $\bQ$, everything we do goes through with little or no change for an arbitrary number field $F$. The local computations of \S \ref{sect:one-dimensional} and \S\ref{sect:cuspidal} and the global results of \S\ref{sect:cuspidal} and \S\ref{sect:eisenstein} remain valid.  The spectral decomposition of \S\ref{sect:spectral} needs to be adjusted in the following way. We have 
$$
\sZ(\mathbs, g)_{\mathrm{res}} = \frac{1}{\mathrm{vol} \, (G(F)\backslash G(\bA))}  \sum_\chi  \langle \sZ(\mathbs, \cdot), \chi \circ \det \rangle \chi(\det(g))
$$
where the (finite) sum is over all unramified Hecke characters $\chi$ such that $\chi^2 =1$.  When the class number of the field $F$ is odd, e.g. when $F = \bQ$, the sum consists of a single term corresponding to $\chi=1$. At any rate, Lemma \ref{lemm: integral_computation} shows that the only term that may contribute to the main pole is $\chi=1$. A computation as in the case of $\bQ$ shows that the term coming from the trivial representation is cancelled, and we arrive at Theorem \ref{mainthm3}. Theorem \ref{mainthm} then immediately follows. Except for the determination of the value of the constant $C$, Theorem \ref{mainthm2} is valid as well. At this point we do not know how to compute the constant $C$ in general as there is no Kronecker Limit Formula available for an arbitrary number field. 

We expect that our method has more applications to Manin's conjecture, and we plan to pursue these applications in a sequel \cite{TBT15}. A limitation of our method is that it can only be applied to the general linear group, as automorphic forms on other reductive groups typically do not possess Whittaker models, e.g. automorphic representations on symplectic groups of rank larger than one corresponding to holomorphic Siegel modular forms \cite{Howe-PS}.

\

This paper is organized as follows.  \S\ref{sect:preliminaries} contains some background information. The proof of the main theorem has four basic steps: Step 1, the analysis of the one dimensional representations presented in \S\ref{sect:one-dimensional}; Step 2, the analysis of cuspidal representations and Step 3, the analysis of Eisenstein series, presented, respectively, in \S\ref{sect:cuspidal}
and \S\ref{sect:eisenstein}; and finally Step 4, the spectral theory contained in \S\ref{sect:spectral} where we put the results of the previous sections together to prove the main theorem of the paper. In \S\ref{sect:peyre} we show that our results are compatible with the conjecture of Peyre. 

\

\noindent
{\bf Acknowledgements.}
We wish to thank Daniel Loughran, Morten Risager, Yiannis Sakellaridis, Anders S\"odergren, and Yuri Tschinkel for useful communications. 
We would also like to thank referees for careful reading which significantly improves the exposition of our paper. The first author's work on this project was partially supported by the National Security Agency and the Simons Foundation.  The second author is supported by Lars Hesselholt's Niels Bohr Professorship.

\section{Preliminaries}\label{sect:preliminaries}
We assume that the reader is familiar with the basics of the theory of automorphic forms for $\PGL_2$ at the level of \cite{Gelbart} or \cite{Godement}. For ease of reference we recall here some facts and set up some notation that we will be using in the proof of the main theorem. 

\subsection{Riemann zeta}\label{zeta}  As usual $\zeta(s)$ is the Riemann zeta function, and $\Lambda(s)$ the completed zeta function defined by 
$$
\Lambda(s) = \pi^{-s/2} \Gamma(s/2) \zeta(s). 
$$
The function $\Lambda(s)$ has functional equation
$$
\Lambda(s) = \Lambda(1-s), 
$$
and the function 
$$
\Lambda(s) + \frac{1}{s} - \frac{1}{s-1}
$$
has an analytic continuation to an entire function.

\subsection{An integration formula} 
We will need an integration formula. If $H$ is a unimodular locally compact group, and $S$ and $T$ are two closed subgroups, 
such that $ST$ covers $H$ except for a set of measure zero, and $S \cap T$ is compact, then 
$$
\mathrm dx = \mathrm d_l s \, \mathrm d_r t
$$
is a  Haar measure on $H$ where $\mathrm d_ls$ is a left invariant haar measure on $S$,
and $\mathrm d_r t$ is a right invariant haar measure on $T$. In particular, if $T$ is unimodular, then 
$$
\mathrm dx = \mathrm d_l s \, \mathrm dt
$$
is a Haar measure. We will apply this to the Iwasawa decomposition.

As we defined in the introduction, let $G = \PGL_2$. Suppose that $F$ is a number field.
For each place $v \in \Val(F)$, we denote its completion by $F_v$.
Then we have the Iwasawa decomposition:
\[
 G(F_v) = P(F_v)K_v
\]
where $P$ is the standard Borel subgroup of $G$ i.e., the closed subgroup of upper triangler matrices,
and $K_v$ is a maximal compact subgroup in $G(F_v)$.
(When $v$ is a non-archimedean place, $K_v = G(\cO_v)$ where $\cO_v$ is the ring of integers in $F_v$.
When $v$ is a real place, $K_v = \mathrm{SO}_2(\bR)$.)
It follows from the integration formula that for any measurable function $f$ on $G(F_v)$, we have
$$
\int_{G(F_v)} f(g) \, \mathrm dg = \int_{F_v} \int_{F_v^\times} \int_{K_v} 
f\left( \begin{pmatrix} 1 & x \\ & 1 \end{pmatrix} \begin{pmatrix} a \\ & 1 \end{pmatrix} k\right) |a|^{-1} \, \mathrm dk \, \mathrm da^\times \, \mathrm d x. 
$$
If $v$ is a non-archimedean place and $f$ is a function on $\PGL_2(F_v)$ which is invariant on the right under $K_v$ then we have
$$
\int_{G(F_v)} f(g) \, \mathrm dg = 
\sum_{m\in \bZ} q^{m} \int_{F_v} f\left( \begin{pmatrix} 1 & x \\ & 1 \end{pmatrix} \begin{pmatrix} \varpi^m \\ & 1 \end{pmatrix}\right) \, \mathrm dx,
$$
where $\varpi$ is an uniformizer of $F_v$.
If $v$ is an archimedean place, then instead we have
$$
\int_{G(F_v)} f(g) \, \mathrm dg = 
\int_{F_v} \int_{F_v^\times} 
f\left( \begin{pmatrix} 1 & x \\ & 1 \end{pmatrix} \begin{pmatrix} a \\ & 1 \end{pmatrix}\right) |a|^{-1}  \, \mathrm da^\times \, \mathrm d x.
$$
We will use these integration formulae often without comment. 

\subsection{Whittaker models}
Let $N$ be the unipotent radical of the standard Borel subgroup in $\PGL_2$, i.e., the closed subgroup of upper triangler matrices. 
For a non-archimedean place $v$ of $\bQ$, we let $\theta_v$ be a non-trivial character of $N(\bQ_v)$. Define $C_{\theta_v}^\infty (\PGL_2(\bQ_v))$ 
to be the space of smooth complex valued functions on $\PGL_2(\bQ_v)$ satisfying 
$$
W(ng) = \theta_v(n) W(g)
$$
for all $n \in N(\bQ_v), g \in \PGL_2(\bQ_v)$. For any irreducible admissible representation $\pi$ of $\PGL_2(\bQ_v)$ the intertwining space 
$$
{\rm Hom}_{\PGL_2(\bQ_v)}(\pi, C_{\theta_v}^\infty (\PGL_2(\bQ_v))) 
$$
is at most one dimensional; if the dimension is one, we say $\pi$ is generic, and we call the corresponding realization of $\pi$ as a space of $N$-quasi-invariant functions the Whittaker model of $\pi$. 

\

We recall some facts from \cite{Godement}, \S 16. Let $\pi$ be an unramified principal series representation 
$\pi = \mathrm{Ind}_P^G(\chi \otimes \chi^{-1})$, with $\chi$ unramified, where $P$ is the standard Borel subgroup of $G$. 
Then $\pi$ has a unique $K_v$ fixed vector. The image of this $K_v$-fixed vector in the Whittaker model, call it $W_\pi$, 
will be $K_v$-invariant on the right, and $N$-quasi-invariant on the left. 
By Iwasawa decomposition in order to calculate the values of $W_\pi$ it suffices to know the values of the function along the diagonal subgroup. We have 
\begin{align*}
W_\pi\begin{pmatrix} \varpi^m \\ & 1 \end{pmatrix} & = \begin{cases} q^{-m/2} \sum_{k=0}^m \chi(\varpi^k) \chi^{-1}(\varpi^{m-k}) & m \geq 0 ; \\ 
0 & m < 0
\end{cases}\\
& = \begin{cases} q^{-m/2} \frac{\chi(\varpi)^{m+1}- \chi(\varpi)^{-m-1}}{\chi(\varpi) - \chi(\varpi)^{-1}} & m \geq 0 ; \\ 
0 & m < 0.
\end{cases}
\end{align*}
Written compactly we have 
$$
W_\pi \begin{pmatrix} a \\ & 1 \end{pmatrix} = |a|^{1/2} \ch_\cO(a) 
\frac{\chi(\varpi) \chi(a) - \chi(\varpi)^{-1} \chi(a)^{-1}}{\chi(\varpi) - \chi(\varpi)^{-1}},
$$
where
\[
\ch_\cO(a) =
 \begin{cases}
  1 & \text{ if $a \in \cO$}\\
  0 & \text{ if $a \not\in \cO$}.
 \end{cases}
\]

Also by definition 
$$
W_\pi\left(\begin{pmatrix} 1 & x \\ & 1 \end{pmatrix} g k\right) = \psi_v(x) W_\pi(g),
$$
where $\psi_v : \bQ_v \rightarrow \mathbb S^1$ is the standard additive character of $\bQ_v$.

\begin{lemm}
We have 
$$
\sum_{m=0}^\infty q^{m(1/2 - s)}W_\pi\begin{pmatrix} \varpi^m \\ & 1 \end{pmatrix} =  L(s, \pi) 
$$
where 
$$
L(s, \pi) := \frac{1}{(1- \chi(\varpi) q^{-s})(1- \chi^{-1}(\varpi) q^{-s})}. 
$$
\end{lemm}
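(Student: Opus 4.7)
The plan is a direct computation: substitute the explicit formula for $W_\pi$ on the diagonal into the left-hand side, and then recognize the result as the product of two geometric series.

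First, I would insert the identity
$$
W_\pi\begin{pmatrix} \varpi^m \\ & 1 \end{pmatrix} = q^{-m/2}\sum_{k=0}^{m} \chi(\varpi)^k \chi(\varpi)^{-(m-k)}
$$
from the case $m \geq 0$ of the formula given just above the lemma. The factor $q^{-m/2}$ then cancels against $q^{m/2}$ inside $q^{m(1/2-s)}$, reducing the sum to
$$
\sum_{m=0}^{\infty} q^{-ms} \sum_{k=0}^{m} \chi(\varpi)^{k}\,\chi(\varpi)^{-(m-k)}.
$$

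Second, with the abbreviations $\alpha = \chi(\varpi)$ and $\beta = \chi(\varpi)^{-1}$ and $x = q^{-s}$, I would invoke the elementary generating function identity
$$
\sum_{m=0}^{\infty}\Bigl(\sum_{k=0}^{m} \alpha^k \beta^{m-k}\Bigr) x^m
= \Bigl(\sum_{k=0}^{\infty} \alpha^k x^k\Bigr)\Bigl(\sum_{j=0}^{\infty} \beta^j x^j\Bigr)
= \frac{1}{(1-\alpha x)(1-\beta x)},
$$
which holds as a formal identity and converges absolutely provided $|\alpha x|, |\beta x| < 1$. Substituting back gives exactly $L(s,\pi)$ as defined.

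The only mild point to address is convergence: since $\chi$ is a (not necessarily unitary) unramified character, one should note that the identity is valid for $\Re s$ sufficiently large, namely $\Re s > \max\{\log_q|\chi(\varpi)|,\, -\log_q|\chi(\varpi)|\}$, and extends to meromorphic equality on all of $\bC$ by analytic continuation of both sides. There is no real obstacle here; the lemma is essentially the classical Cauchy product identity disguised through the Satake parametrization of the spherical Whittaker function.
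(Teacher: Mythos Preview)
Your argument is correct. The paper does not actually give a proof of this lemma; it is stated as a standard fact recalled from \cite{Godement}, \S 16, immediately after the explicit diagonal formula for $W_\pi$. Your direct computation via the Cauchy product of two geometric series is precisely the standard verification, and there is nothing to add.
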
 

\

Let us also recall the automorphic Fourier expansion \cite[P. 85]{Gelbart}. If $\phi$ is a cusp form on $\PGL_2$ we have 
$$
\phi(g) = \sum_{\alpha \in \bQ^\times} W_\phi\left(\begin{pmatrix} \alpha \\ & 1 \end{pmatrix} g\right) 
$$
with 
$$
W_\phi(g) = \int_{\bQ \backslash \bA} \phi \left( \begin{pmatrix} 1 & x \\ & 1 \end{pmatrix} g \right) \psi^{-1}(x) \, \mathrm dx. 
$$

\subsection{Eisenstein series}\label{subsect:eisenstein}
By Iwasawa decomposition, any element of $\PGL_2(\bQ_v)$ can be written as 
$$
g_v = n_v a_v k_v
$$
with $n_v \in N(\bQ_v), a_v \in A(\bQ_v), k_v \in K_v$. Define a function $\chi_{v, P}$ by 
$$
\chi_{v, P}: g_v = n_v a_v k_v \mapsto |a_v|_v
$$
where we have represented an element in $A(\bQ_v)$ in the form 
$$
\begin{pmatrix} a_v \\ & 1 \end{pmatrix}. 
$$
We set 
$$
\chi_P : = \prod_v \chi_{v, P}. 
$$
We note that for $\gamma \in P(\bQ)$, we have $\chi_P(\gamma g) = \chi_P(g)$ for any $g \in G(\bA)$. 
Moreover, $\chi_P^{-1}$ is the usual height on $\mathbb P^1(\bQ) = P(\bQ) \backslash G(\bQ)$ which is used in the study of height zeta functions for generalized flag varieties in \cite{fmt}. Define the Eisenstein series $E(s, g)$ by 
$$
E(s, g) = \sum_{\gamma \in P(\bQ)\backslash G(\bQ)} \chi(s, g) 
$$
where $\chi(s, g) := \chi_P(g)^{s+ 1/2}$.  For later reference we note the Fourier expansion of the Eisenstein series \cite[Equation 3.10]{Gelbart} in the following form: 
$$
E(s, g) = \chi_P(g)^{s+ 1/2} + \frac{\Lambda(2s)}{\Lambda(2s+1)} \chi_P(g)^{-s + 1/2} + \frac{1}{\zeta(2s+1)}\sum_{\alpha \in \bQ^\times} W_s\left(\begin{pmatrix} \alpha \\ & 1 \end{pmatrix} g \right). 
$$
Here $W_s((g_v)_v) = \prod_v W_{s, v}(g_v)$, where for $v < \infty$, $W_{s, v}$ is the normalized $K_v$-invariant Whittaker function for the induced representation $\mathrm{Ind}_P^G(|.|^s \otimes |.|^{-s})$, and for $v=\infty$, 
$$
W_{s, v}(g) = \int_{\bR} \chi_{P,v}\left(w \begin{pmatrix} 1 & x \\ & 1 \end{pmatrix} g\right)^{s+ 1/2} e^{2 \pi i x} \, \mathrm dx, 
$$
where $ w = \begin{pmatrix} & 1 \\ -1 \end{pmatrix}$ is a representative for the longest element of the Weyl group. 
This integral converges when $\Re(s)$ is sufficiently large, and has an analytic continuation to an entire function of $s$. 
We also have the functional equation 
$$
E(s, g) = \frac{\Lambda(2s)}{\Lambda(1+ 2s)}E(-s, g),
$$
where $g \in G(\bA)$. We note that 
$$
{\rm Res}_{s=1/2} E(s, g) = \frac{1}{2 \Lambda(2)} = \frac{3}{\pi}. 
$$

\

The following lemma, generalized by Langlands \cite{Langlands}, is well-known: 

\begin{lemm} 
We have 
$$
 {\rm Res}_{y = 1/2} E(y, e)= \frac{1}{\mathrm{vol} \, (G(\bQ)\backslash G(\bA))},
$$
where $e \in G(\bA)$ is the identity.
\end{lemm}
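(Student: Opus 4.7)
The plan is first to verify, from the Fourier expansion displayed above, that the residue $\phi_0(g) := \mathrm{Res}_{y=1/2} E(y,g)$ is a constant function on $G(\bQ)\backslash G(\bA)$, and then to pin down that constant by integrating $\phi_0$ against $1$ via a Rankin--Selberg unfolding.

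For the first step, I would examine each of the three pieces of the Fourier expansion near $y=1/2$. The leading term $\chi_P(g)^{y+1/2}$ is entire in $y$; the Whittaker sum, carrying the prefactor $\zeta(2y+1)^{-1}$, is regular at $y=1/2$ since $W_y$ extends to an entire function and $\zeta(2)$ is nonzero; and the only pole comes from the intertwining term $\Lambda(2y)\Lambda(2y+1)^{-1}\chi_P(g)^{-y+1/2}$, arising from the simple pole of $\Lambda(2y)$ at $y=1/2$. Since $\Lambda(u)$ has residue $1$ at $u=1$, we get $\mathrm{Res}_{y=1/2}\Lambda(2y) = \tfrac{1}{2}$, while $\chi_P(g)^{-y+1/2}\big|_{y=1/2} = 1$; hence $\phi_0(g) = (2\Lambda(2))^{-1}$ is a constant, independent of $g$, matching the value $3/\pi$ quoted just above.

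For the second step, I would compute $\int_{G(\bQ)\backslash G(\bA)}\phi_0(g)\,dg$ in two ways. Directly it equals $\phi_0(e)\cdot \mathrm{vol}(G(\bQ)\backslash G(\bA))$. Alternatively, interchanging the residue with the integral and unfolding the defining sum for $E(y,\cdot)$ converts it to
$$\mathrm{Res}_{y=1/2}\int_{P(\bQ)\backslash G(\bA)}\chi_P(g)^{y+1/2}\,dg.$$
Applying the Iwasawa decomposition from \S 2.2 together with $\mathrm{vol}(N(\bQ)\backslash N(\bA)) = 1$ reduces this to $\mathrm{vol}(K)$ times the residue of a Tate-type integral $\int_{\bQ^\times\backslash\bA^\times}|a|^{y-1/2}\,d^\times a$ on the split torus, which with the paper's measure normalizations yields $1$. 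Equating the two expressions gives $\phi_0(e)\cdot\mathrm{vol}(G(\bQ)\backslash G(\bA)) = 1$, the claim.

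The hard part is justifying the unfolding rigorously: $E(y,g)$ converges absolutely as a sum only for $\Re y$ sufficiently large, where the integral against $1$ diverges in the cusp (from the $\chi_P(g)^{y+1/2}$ term), while in the strip where the integral would converge the Eisenstein series itself must be defined by analytic continuation. The standard resolution is to work with the Arthur truncation $\Lambda^T E(y,g)$, on which both the unfolding and the residue computation are absolutely convergent, and then pass to the limit $T \to \infty$; the Maass--Selberg relations ensure that the truncation boundary contributions cancel cleanly at the level of the residue at $y=1/2$. Alternatively, one may simply invoke Langlands' general identification of the residual discrete spectrum of $\PGL_2$ with the constant functions, which directly forces the normalization $\phi_0(e)\cdot\mathrm{vol}(G(\bQ)\backslash G(\bA)) = 1$.
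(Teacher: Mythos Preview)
Your proposal is correct and follows the same overall strategy as the paper---compute $\int_{G(\bQ)\backslash G(\bA)}\phi_0$ two ways, once as $\phi_0(e)\cdot\mathrm{vol}$ and once by unfolding---but the paper handles the key convergence difficulty differently. Rather than truncating the Eisenstein series or appealing to the Maass--Selberg relations, the paper introduces a smooth compactly supported test function $f$ on $(0,\infty)$ and works with the pseudo-Eisenstein series $\theta_f(g)=\sum_{\gamma\in P(\bQ)\backslash G(\bQ)}\chi_P(\gamma g)^{1/2}f(\chi_P(\gamma g))$. This $\theta_f$ is genuinely integrable over $G(\bQ)\backslash G(\bA)$, and by Mellin inversion it equals $\frac{1}{2\pi i}\int_{(\sigma)}\hat f(s)E(s,g)\,ds$; a contour shift across $s=1/2$ isolates $\hat f(1/2)\cdot\mathrm{Res}_{s=1/2}E(s,g)$. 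Unfolding $\int\theta_f$ directly via Iwasawa then gives $\hat f(1/2)$, and comparison yields the lemma. This device avoids truncation theory entirely and is more elementary than your route; your appeal to Langlands' general residual-spectrum theorem is also overkill here, since that theorem (in the rank-one case) is essentially what is being proved.
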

\begin{proof}
For any smooth compactly supported function $f$ on $(0, +\infty)$, we define the Mellin transform by 
$$
\hat{f}(s) = \int_{0}^\infty f(x) x^{-s} \, \frac{\mathrm dx}{x}. 
$$
Mellin inversion says for $\sigma \gg 0$ 
$$
f(x) = \frac{1}{2 \pi i} \int_{\sigma - i \infty}^{\sigma + i \infty} \hat{f}(s) x^s \, \mathrm ds.
$$
We also define for $g \in G(\bA)$ 
$$
\theta_f(g) = \sum_{\gamma \in P(\bQ) \backslash G(\bQ)} \chi_P(\gamma g)^{1/2} f(\chi_P(\gamma g)). 
$$
We have 
\begin{align*}
\theta_f(g) & =  \sum_{\gamma \in P(\bQ) \backslash G(\bQ)} 
\frac{\chi_P(\gamma g)^{1/2}}{2 \pi i}  \int_{\sigma - i \infty}^{\sigma + i \infty} \hat{f}(s) \chi_P(\gamma g)^s \, \mathrm ds \\
& = \frac{1}{2 \pi i} \int_{\sigma - i \infty}^{\sigma + i \infty} \hat{f}(s) E(s, g) \, \mathrm ds \\
& = \hat{f}\left(\frac{1}{2}\right) {\rm Res}_{s = 1/2} E(s, g) + \frac{1}{2 \pi i} \int_{- i \infty}^{+ i \infty} \hat{f}(s) E(s, g) \, \mathrm ds. 
\end{align*}
As the residue of the Eisenstein series at $s=1/2$ does not depend on $g$, we have 
\begin{align*}
\int_{G(\bQ)\backslash G(\bA)} \theta_f(g) \, \mathrm dg & = \hat{f}\left(\frac{1}{2}\right) 
{\rm Res}_{s = 1/2} E(s, e) \mathrm{vol}(G(\bQ) \backslash G(\bA)) \\
& + \frac{1}{2 \pi i} \int_{G(\bQ) \backslash G(\bA)} \int_{- i \infty}^{+ i \infty} \hat{f}(s) E(s, g) \, \mathrm ds \, \mathrm dg.
\end{align*}
We now calculate the integral of $\theta_f$ a different way. We have 
\begin{align*}
\int_{G(\bQ)\backslash G(\bA)} \theta_f(g) \, \mathrm dg & = \int_{P(\bQ) \backslash G(\bA)} \chi_P(g)^{1/2} f(\chi_P(g)) \, \mathrm dg  \\
& = \int_K \int_{A(\bQ) \backslash A(\bA)} \int_{N(\bQ) \backslash N(\bA)} \chi_P(nak)^{1/2} f(\chi_P(nak)) \, \mathrm dn \, \mathrm da \, \mathrm dk \\
& = \int_{\bQ^\times \backslash \bA^\times} |a|^{-1/2}f(|a|)\, \mathrm d^\times a \\
& = \mathrm{vol}(\bQ^\times \backslash \bA^c) \int_0^\infty x^{-1/2} f(x)\, \frac{\mathrm dx}{x} \\
& = \mathrm{vol}(\bQ^\times \backslash \bA^c) \hat{f}\left(\frac{1}{2}\right),
\end{align*}
where $\bA^c$ is the kernel of the norm $N: \bA^\times \rightarrow \bR^\times$.
Since $\bQ$ has class number one, we conclude that $ \mathrm{vol}(\bQ^\times \backslash \bA^c)=1$. Comparing the two expressions for $\int \theta_f$ gives the lemma. 
\end{proof} 

\subsection{Spectral expansion} 
Let $f$ be a smooth bounded right $K$- and left $\PGL_2(\bQ)$-invariant function on $\PGL_2(\bA)$ all of whose derivatives are also smooth and bounded. Here we recall a theorem from \cite{PGL2} regarding the spectral decomposition of such a function. 

\

We start by fixing a basis of right $K$-fixed functions for $L^2(G(\bQ) \backslash G(\bA))$. We write 
$$
L^2(G(\bQ) \backslash G(\bA))^K = L_{\mathrm{res}}^K \oplus L_{\mathrm{cusp}}^K \oplus L_{\mathrm{eis}}^K,
$$
where $L_{\mathrm{res}}^K$ the trivial representation part, $L_{\mathrm{cusp}}^K$ the cuspidal part, and $L_{\mathrm{eis}}^K$ the Eisenstein series part.
An orthonormal basis of this space is the constant function 
$$
\phi_{\mathrm{res}}(g) = \frac{1}{\sqrt{\mathrm{vol} \, (G(\bQ)\backslash G(\bA))}}. 
$$
The projection of $f$ onto $L_{\mathrm{res}}^K$ is given by 
$$
f(g)_{\mathrm{res}} = \langle f, \phi_{\mathrm{res}} \rangle \phi_{\mathrm{res}}(g) 
= \frac{1}{\mathrm{vol} \, (G(\bQ)\backslash G(\bA))} \int_{\PGL_2(\bQ)\backslash \PGL_2(\bA)} f(g) \, \mathrm dg. 
$$

\

Next, we take an orthonormal basis $\{ \phi_\pi \}_\pi$ for $L_{\mathrm{cusp}}^K$ where $\pi$ runs over all automorphic cuspidal representations with a $K$-fixed vector. We have 
$$
f(g)_{\mathrm{cusp}} = \sum_\pi \langle f, \phi_\pi \rangle \phi_\pi(g). 
$$
This is possible because $\mathrm{dim}(\pi^K) = 1$ for any $\pi$.
Indeed, by Tensor product theorem, we have $\pi \cong \bigotimes' \pi_v$ where $\pi_v$ is a local cuspidal representation.
Taking the $K$-invariant part, we conclude that $\pi^K \cong \bigotimes' \pi_v^{K_v}$.
Since $\pi$ has a non-zero $K$-fixed vector, the local representation $\pi_v$ also has a non-zero $K_v$-fixed vector.
This implies that $\pi_v$ is the induced representation $\mathrm{Ind}_P^G(\chi\otimes \chi^{-1})$ of some unramified character $\chi$ for $P(F_v)$.
Now it follows from the Iwasawa decomposition that $\dim (\pi_v^{K_v})=1$.

\

Finally we consider the projection onto the continuous spectrum. We have 
$$
f(g)_{\mathrm{eis}} = \frac{1}{4 \pi} \int_{\bR} \langle f , E(it, .) \rangle E(it, g) \, \mathrm dt.  
$$

\

We then have 
$$
f(g) = f(g)_{\mathrm{res}} + f(g)_{\mathrm{cusp}}+ f(g)_{\mathrm{eis}}
$$
as an identity of continuous functions. 

\section{Step one: one dimensional automorphic characters}\label{sect:one-dimensional}
In this step we study the function $\sZ(s, g)_{\mathrm{res}}$.  We have 
$$
\int_{G(\bQ)\backslash G(\bA)} \sZ(g, \mathbs) \, \mathrm dg =\int_{G(\bA)} \sH(g, \mathbs)^{-1} \,\mathrm dg 
= \prod_v \int_{G(\bQ_v)}  \sH_v(g, \mathbs)^{-1} \, \mathrm dg
$$

We consider the local integral
$$ 
  \int_{G(\bQ_v)} \sH_v(g, \mathbs)^{-1} \, \mathrm dg
 =  \int_{G(\bQ_v)} \sH_1(g)^{w-s} \sH_2(g)^{-s-w} |\det g|^{s} \, \mathrm dg. 
$$

\subsection{Non-archimedean computation} 

Here we will use Chambert-Loir and Tschinkel's formula of height integrals.
Let $F$ be a number field.
We fix the standard integral model $\mathcal G$ of $\PGL_2$ over $\cO_F$ where $\cO_F$ is the ring of integers for $F$.
Let $\omega$ be a top degree invariant form on $\mathcal G$ defined over $\cO_F$ which is a generator for $\Omega^3_{\mathcal G/\mathrm{Spec}(\cO_F)}$. 
For any non-archimedean place $v \in \mathrm{Val}(F)$, we denote its the $v$-adic completion by $F_v$, the ring of integers by $\cO_v$, the residue field by $\bF_v$. 
We write the cardinality of $\bF_v$ by $q_v$. For any uniformizer $\varpi$,we have $|\varpi|_v = q_v^{-1}$. Then it is a well-known formula of Weil that

$$
\int_{G(\cO_v)} \mathrm d |\omega|_v = q_v^{-3} \# G(\bF_v) = 1-\frac{1}{q_v^2}.
$$
We denote this number by $a_v$. Then the normalized Haar measure is
$$
\mathrm d g_v = \frac{\mathrm d |\omega|_v}{a_v},
$$
so that $\int_{G(\cO_v)} \mathrm d g_v = 1$.
The variety $X$ has a natural integral model over $\rm{Spec}(\cO_F)$, and it has good reduction at any non-archimedean place $v$. 
Thus Chambert-Loir and Tschinkel's formula applies to our case. (See \cite[Proposition 4.1.6]{volume}.) 
Note that $-\mathrm{div} (\omega) = 2 \tilde{D} + E$, so we have
\begin{align*}
\int_{G(F_v)} \sH_v (g_v, s, w)^{-1} \, \mathrm d g_v &= a_v^{-1} \int_{X(F_v)} \sH_{\tilde{D}, v}^{-s}\sH_{E, v}^{-w} \, \mathrm d |\omega|_v\\
&= a_v^{-1} \int_{X(F_v)} \sH_{\tilde{D}, v}^{-(s-2)}\sH_{E, v}^{-(w-1)} \, \mathrm d \tau_{X, v}\\
&= a_v^{-1}(q_v^{-3}\# G(\bF_v) + q_v^{-3}\frac{q_v-1}{q_v^{s-1}-1} \# \tilde{D}^\circ (\bF_v)\\&+ q_v^{-3}\frac{q_v-1}{q_v^{w}-1} \# E^\circ (\bF_v) + q_v^{-3} \frac{q_v-1}{q_v^{s-1}-1}\frac{q_v-1}{q_v^{w}-1} \# \tilde{D} \cap E(\bF_v) )\\
&= \frac{1-q_v^{-(s+w)}}{(1-q_v^{-(s-1)})(1-q_v^{-w})}.
\end{align*}

\subsection{The archimedean computation}  We have 
\begin{align*}
 \int_{\PGL_2(\bR)} & \sH_1(g)^{w-s}\sH_2(g)^{-s-w} \mathopen|\det g \mathclose|^s \, \mathrm dg  \\
 & = \int_\bR \int_{\bR^\times} \sH_1 \left( \begin{pmatrix} 1 & x \\ & 1 \end{pmatrix} \begin{pmatrix} \alpha \\ & 1 \end{pmatrix} \right)^{w-s} 
 \sH_2 \left( \begin{pmatrix} 1 & x \\ & 1 \end{pmatrix} \begin{pmatrix} \alpha \\ & 1 \end{pmatrix} \right)^{-w-s}
 |\alpha|^{s-1} \, \mathrm d^\times \alpha \, \mathrm d x \\
 &= \int_\bR \int_{\bR^\times}(\alpha^2 + x^2 + 1)^{\frac{-s-w}{2}}|\alpha|^{s-1} \, \mathrm d^\times \alpha \, \mathrm d x.
\end{align*}
Do a change of variable $\alpha = \sqrt{x^2 + 1} \beta$ to obtain 
$$
\left(\int_{\bR^\times} (\beta^2 + 1)^{\frac{-s-w}{2}} |\beta|^{s-1} \, \mathrm d^\times \beta\right). \left(\int_\bR (x^2 + 1)^{-w/2-1/2} \, \mathrm dx \right)
$$
Now we invoke a standard integration formula.  Equation 3.251.2 of \cite{Tables} says 
$$
\int_0^\infty x^{\mu-1} (1+x^2)^{\nu -1} \, \mathrm dx = \frac{1}{2} B(\frac{\mu}{2}, 1 - \nu - \frac{\mu}{2})
$$
provided that $\Re \mu >0$ and $\Re (\nu + \frac{1}{2} \mu ) < 1$. This implies that 
$$
\int_{\bR^\times} (\beta^2 + 1)^{\frac{-s-w}{2}} |\beta|^{s-1} \, \mathrm d \beta^\times = B(\frac{s-1}{2}, \frac{w+1}{2}) = \frac{\Gamma(\frac{s-1}{2})\Gamma(\frac{w+1}{2})}{\Gamma(\frac{s+w}{2})}
$$
provided that $\Re (s) > 1$ and $\Re (w) > -1/2$. Similarly, 
$$
\int_\bR (x^2 + 1)^{-w/2-1/2} \, \mathrm dx = B(\frac{1}{2}, \frac{w}{2})=\frac{\Gamma(\frac{1}{2})\Gamma(\frac{w}{2})}{\Gamma(\frac{w+1}{2})} = \sqrt{\pi} \frac{\Gamma(\frac{w}{2})}{\Gamma(\frac{w+1}{2})}
$$
provided that $\Re (w) > 0$. Consequently, our integral is equal to
$$
 \sqrt{\pi} \frac{\Gamma(\frac{s-1}{2})\Gamma(\frac{w}{2})}{\Gamma(\frac{s+w}{2})}
$$
if $\Re(s) > 1$ and $\Re (w) > 0$. 

\

Thus if $F=\bQ$ we obtain
$$
\sZ(s, g)_{\mathrm{res}} = \int_{G(\bA)} \sH(g, s, w)^{-1} \, \mathrm d g = \sqrt{\pi} \frac{\Gamma(\frac{s-1}{2})\Gamma(\frac{w}{2})}{\Gamma(\frac{s+w}{2})} \frac{\zeta(s-1)\zeta(w)}{\zeta(s+w)}= \frac{\Lambda(s-1)\Lambda(w)}{\Lambda(s+w)}
$$
where 
$$
\Lambda(u) = \pi^{-u/2}\Gamma(u/2) \zeta(u) 
$$
is the completed Riemann zeta function.

\subsection{An integral computation} For use in a later section we compute a certain type of $p$-adic integral. 
Suppose we have a function $f$ given by the following expression: 
$$
f\left(n\begin{pmatrix} a \\ & 1 \end{pmatrix}  k\right) = |a|^\tau 
$$
for a fixed complex number $\tau$. Here $n \in N(F)$ and $k \in K$ where $F$ is a local field. We would like to compute the integral 
$$
\int_{G(F)} f(g) \sH_1(g)^{w-s} \sH_2(g)^{-s-w} \mathopen|\det g\mathclose|^s \, \mathrm dg. 
$$
By the integration formula this is equal to 
\begin{align*}
\int_F\int_{F^\times} & |a|^{s+\tau - 1} \max \{ |a|, |x|, 1\}^{-s-w} \, \mathrm d^\times a \, \mathrm dx  \\
& = \int_F\int_{F^\times} |a|^{s+\tau - 1} \max \{ |a|, |x|, 1\}^{-(s+\tau)-(w-\tau)} \, \mathrm d^\times a \, \mathrm dx \\
& = \int_{G(F)}  \sH_1(g)^{(w-\tau)-(s+\tau)} \sH_2(g)^{-(s+\tau)-(w-\tau)} \mathopen|\det g\mathclose|^{s+\tau} \, \mathrm dg, 
\end{align*}
by the integration formula. We state this computation as a lemma: 
\begin{lemm}
\label{lemm: integral_computation}
For $f$ as above we have 
$$
\int_{G(F)} f(g) \sH_1(g)^{w-s} \sH_2(g)^{-s-w} \mathopen|\det g\mathclose|^s \, \mathrm dg = \frac{\zeta_F(s+\tau-1)\zeta_F(w-\tau)}{\zeta_F(s+w)},  
$$
if $F$ is non-archimedean. In the case where $F = \bR$, the value of the integral is 
$$
\sqrt{\pi} \frac{\Gamma(\frac{s+\tau-1}{2})\Gamma(\frac{w-\tau}{2})}{\Gamma(\frac{s+w}{2})}. 
$$
\end{lemm}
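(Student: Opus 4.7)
The statement is essentially a shift of variables applied to the earlier local integrals, so the proof is mostly bookkeeping. The plan is to apply the Iwasawa integration formula, absorb $|a|^\tau$ into the parameter $s$, and then quote the non-archimedean and archimedean computations carried out earlier in the section.

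First I would use the Iwasawa decomposition $g = n\begin{pmatrix} a \\ & 1 \end{pmatrix} k$. Both the height functions $\sH_1, \sH_2$ and the determinant $|\det g|$ are right-$K$-invariant, so is $f$ by hypothesis, hence the integral over $K$ factors out trivially (contributing $\mathrm{vol}(K)=1$). Using the integration formula from \S 2.2, the integral in question becomes
$$
\int_F \int_{F^\times} |a|^{\tau} \cdot |a|^{s-1}\cdot \sH_1\!\left(n\begin{pmatrix} a \\ & 1 \end{pmatrix}\right)^{w-s} \sH_2\!\left(n\begin{pmatrix} a \\ & 1 \end{pmatrix}\right)^{-s-w} \,\mathrm d^\times a \,\mathrm dx,
$$
where the factor $|a|^{s-1}$ comes from $|\det g|^s = |a|^s$ combined with the Jacobian $|a|^{-1}$.

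Next I would collect the power $|a|^{s+\tau-1}$ and observe that the integrand is precisely what one obtains for the parameter-shifted pair $(s',w')=(s+\tau,\,w-\tau)$, since the factor $\sH_2^{-s-w}$ depends only on the sum $s+w$, which is unchanged. Concretely,
$$
|a|^{s+\tau-1} = |a|^{s'-1}, \qquad -s-w = -s'-w', \qquad w-s = w'-s'+2\tau,
$$
but the heights $\sH_1$ and $\sH_2$ along the $NA$-orbit satisfy $\sH_1(n\mathrm{diag}(a,1)) = \max\{|a|,1\}$ while for the specific form of the integrand only the exponent of $\sH_2$ appears after the change (indeed, as in the archimedean derivation, the $\sH_1$ exponent drops out because on $NA$ the relevant object is $\max\{|a|,|x|,1\}^{-s-w}$). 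So the integral collapses to
$$
\int_F \int_{F^\times} |a|^{s+\tau-1}\max\{|a|,|x|,1\}^{-s-w}\,\mathrm d^\times a\,\mathrm dx,
$$
which is exactly the integral computed in the non-archimedean and archimedean subsections with $(s,w)$ replaced by $(s+\tau,w-\tau)$.

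Finally I would simply quote the two evaluations already established in the section: in the non-archimedean case this gives $\frac{1 - q^{-(s+w)}}{(1-q^{-(s+\tau-1)})(1-q^{-(w-\tau)})} = \frac{\zeta_F(s+\tau-1)\zeta_F(w-\tau)}{\zeta_F(s+w)}$, and in the archimedean case it gives $\sqrt{\pi}\,\frac{\Gamma\!\left(\frac{s+\tau-1}{2}\right)\Gamma\!\left(\frac{w-\tau}{2}\right)}{\Gamma\!\left(\frac{s+w}{2}\right)}$, as claimed. The only real point requiring care, and the one I would single out as the main (modest) obstacle, is to verify that after the substitution the $\sH_1$-exponent $(w-s) \mapsto (w-s)-2\tau$ really is absorbed correctly; this is automatic once one writes out the explicit form $\sH_1\sH_2 = \max\{|a|,1\}\max\{|a|,|x|,1\}$ on $NA$ and checks that only $s+w$ and $s+\tau$ appear in the integrand after simplification.
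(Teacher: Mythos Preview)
Your approach is essentially the same as the paper's: apply the Iwasawa integration formula, reduce to the double integral $\int_F\int_{F^\times}|a|^{s+\tau-1}\max\{|a|,|x|,1\}^{-s-w}\,\mathrm d^\times a\,\mathrm dx$, observe that $-s-w=-(s+\tau)-(w-\tau)$, and then quote the earlier non-archimedean and archimedean evaluations with $(s,w)\mapsto(s+\tau,w-\tau)$.

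One small correction: you write $\sH_1(n\,\mathrm{diag}(a,1))=\max\{|a|,1\}$, but in fact for $g=\begin{pmatrix}1&x\\0&1\end{pmatrix}\begin{pmatrix}a&0\\0&1\end{pmatrix}=\begin{pmatrix}a&x\\0&1\end{pmatrix}$ one has $c=0$, $d=1$, so $\sH_1(g)=\max\{|c|,|d|\}=1$. This is precisely why the $\sH_1^{w-s}$ factor disappears outright on $NA$---there is no cancellation to check, and your worry about the exponent $(w-s)\mapsto(w-s)-2\tau$ is moot. With this correction your reduction to the displayed double integral is immediate, and the rest goes through exactly as you say.
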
 

\section{Step two: the cuspidal contribution}\label{sect:cuspidal}

In this section $\pi$ is an automorphic cuspidal representation of $\PGL_2$ with a $K$-fixed vector. We denote by $\pi^K$ the space of $K$-fixed vectors in $\pi$ which is one dimensional, and we let $\phi_\pi$ be an orthonormal basis for $\pi^K$. We let 
$$\sZ(s, g)_{\mathrm{cusp}} = \sum_\pi  \langle \sZ(s, .), \phi_\pi \rangle \phi_\pi(g). 
$$
We have 
$$
\langle \sZ(s, .), \phi_\pi \rangle = \int_{G(\bA)} \phi_\pi(g) \sH(\mathbs, g)^{-1} \mathrm dg. 
$$

By the automorphic Fourier expansion we have 
$$
\langle \sZ(s, \cdot), \phi_\pi \rangle = \sum_{\alpha \in F^\times} \int_{G(\bA)}
W_{\phi_\pi}\left(\begin{pmatrix} \alpha \\ & 1 \end{pmatrix} g\right) \sH(\mathbs, g)^{-1} \, \mathrm dg
$$ 
The good thing about the use of the Whittaker function is that they have Euler products, so we may write: 
$$
W_{\phi_\pi}(g) = \prod_v W_{\pi_v}(g_v),
$$
where $\pi \cong \bigotimes '\pi_v$ is the restricted product of local representations.
For $\alpha \in F_v^\times$ we set 
$$
J_{\pi_v}(\alpha) : =  \int_{G(F_v)} W_{\pi_v}\left(\begin{pmatrix} \alpha \\ & 1 \end{pmatrix}g\right) 
\sH_1(g)^{w-s} \sH_2(g)^{-s-w} \mathopen|\det g\mathclose|^{s} \, \mathrm dg
$$

\subsection{$v$ non-archimedean}
\label{subsec: cusp_non-archimedean}

In this case, $\pi_v$ is an unramificed principal series representation,
so it has the form of 
\[
 \mathrm{Ind}_{P}^{G}(\chi \otimes \chi^{-1}),
\]
where $\chi$ is an unramified character of $P(F_v)$ which only depends on $\pi_v$.

We will need the following straightforward lemma: 

\begin{lemm}
For an unramified quasi-character $\eta$ and $y \in F_v$ define 
$$
I(\eta, y) = \int_{|u|>1} \eta(u) \psi_v(yu) \, \mathrm du,
$$
where $\psi_v : F_v \rightarrow \mathbb S^1$ is the standard additive character.

Then for $y \not\in \cO$, $I(\eta, y)=0$. If $y \in \cO$, then 
$$
I(\eta, y) = \frac{1-\eta(\varpi)^{-1}}{1- q^{-1} \eta(\varpi)}\eta(y)^{-1}|y|^{-1} - \frac{1-q^{-1}}{1- q^{-1}\eta(\varpi)}; 
$$
in particular, if $y \in \cO^\times$, then $I(\eta, y) = - \eta(\varpi)^{-1}$. 
\end{lemm}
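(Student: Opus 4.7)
The plan is to decompose the annulus $\{|u|>1\}$ into the spheres $\{|u|=q^n\}$ for $n\geq 1$, and on each sphere parametrize $u=\varpi^{-n}u'$ with $u'\in\cO^\times$. Since $\eta$ is unramified, $\eta(u)=\eta(\varpi)^{-n}$, and the change of variables contributes $|\varpi^{-n}|=q^n$ to the measure. This reduces everything to
$$
I(\eta,y)=\sum_{n=1}^{\infty}q^n\,\eta(\varpi)^{-n}\int_{\cO^\times}\psi_v(y\varpi^{-n}u')\,\mathrm du'.
$$
The inner integral depends only on $\lambda:=y\varpi^{-n}$, so the main task becomes the evaluation of the elementary integral $J(\lambda):=\int_{\cO^\times}\psi_v(\lambda u')\,\mathrm du'$.

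To evaluate $J(\lambda)$, I would write $J(\lambda)=\int_{\cO}\psi_v(\lambda u')\,\mathrm du'-\int_{\mm}\psi_v(\lambda u')\,\mathrm du'$ and apply the standard conductor property of $\psi_v$: the integral over $\cO$ is $1$ if $|\lambda|\leq 1$ and $0$ otherwise, while after substituting $u'=\varpi u''$ the integral over $\mm$ equals $q^{-1}$ if $|\lambda|\leq q$ and $0$ otherwise. This yields the trichotomy $J(\lambda)=1-q^{-1}$ if $|\lambda|\leq 1$, $J(\lambda)=-q^{-1}$ if $|\lambda|=q$, and $J(\lambda)=0$ if $|\lambda|>q$.

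Now set $m=\mathrm{val}(y)$, so that $|\lambda|=q^{n-m}$. If $y\notin\cO$, then $m<0$ and $|\lambda|>q$ for every $n\geq 1$, so every term vanishes and $I(\eta,y)=0$. If $y\in\cO$, exactly the ranges $1\leq n\leq m$ contribute $1-q^{-1}$ and $n=m+1$ contributes $-q^{-1}$, giving
$$
I(\eta,y)=(1-q^{-1})\sum_{n=1}^{m}(q\eta(\varpi)^{-1})^n-q^{m}\eta(\varpi)^{-(m+1)}.
$$

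The final step is just algebra: evaluate the finite geometric sum, place over the common denominator $1-q\eta(\varpi)^{-1}$, and multiply top and bottom by $-\eta(\varpi)$ to bring it to the form $1-q^{-1}\eta(\varpi)$. After collecting terms the coefficient of $q^m\eta(\varpi)^{-m}=|y|^{-1}\eta(y)^{-1}$ becomes $(1-\eta(\varpi)^{-1})/(1-q^{-1}\eta(\varpi))$, and the constant term becomes $-(1-q^{-1})/(1-q^{-1}\eta(\varpi))$, which is exactly the claimed formula. The specialization $y\in\cO^\times$ (so $m=0$, with the sum empty) is an immediate check. There is no substantive obstacle here—the only pitfall is keeping track of the three regimes for $J(\lambda)$ and not losing a factor of $q^{-1}$ in the boundary case $|\lambda|=q$, which is the source of the constant term in the final answer.
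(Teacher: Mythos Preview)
Your proof is correct. The paper labels this lemma ``straightforward'' and omits the proof entirely, so there is no argument to compare against; your shell-by-shell decomposition together with the standard evaluation of $\int_{\cO^\times}\psi_v(\lambda u')\,\mathrm du'$ is exactly the expected computation, and the algebraic bookkeeping you outline checks out.
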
 

\

We have 
\begin{align*}
J_{\pi_v}(\alpha) = & \sum_{m \in \Z} q^m\cdot q^{-ms} \int_{F_v} 
W_{\pi_v} \left( \begin{pmatrix} \alpha \\ & 1 \end{pmatrix}  \begin{pmatrix} 1 & x \\ & 1 \end{pmatrix} \begin{pmatrix} \varpi^m \\ & 1 \end{pmatrix} \right) 
\sH_1 \left(\begin{pmatrix} 1 & x \\ & 1 \end{pmatrix} \begin{pmatrix} \varpi^m \\ & 1 \end{pmatrix} \right)^{w-s} \\
& \,\,\,\,\,\,\,\,\,\,\,\,\, \sH_2 \left(\begin{pmatrix} 1 & x \\ & 1 \end{pmatrix}\begin{pmatrix} \varpi^m \\ & 1 \end{pmatrix} \right)^{-s-w}\, \mathrm dx \\
=& \sum_{m\in \bZ} q^{m - ms}W_{\pi_v}\begin{pmatrix} \alpha \varpi^m \\ & 1 \end{pmatrix}  
\int_{F_v} \sH_2 \left(\begin{pmatrix} 1 & x \\ & 1 \end{pmatrix} \begin{pmatrix} \varpi^m \\ & 1 \end{pmatrix} \right)^{-s-w} 
\psi_v(\alpha x) \, \mathrm dx \\
=& \sum_{m \in \bZ} q^{m - ms}W_{\pi_v}\begin{pmatrix} \alpha \varpi^m \\ & 1 \end{pmatrix}  
\int_{F_v} \max \{ 1, q^{-m}, |x|\}^{-s-w} \psi_v(\alpha x) \, \mathrm dx
\end{align*}

\

The first observation is that if $\alpha \not\in \cO$, then $J_v(\alpha)=0$. In fact, 
in order for $W_{\pi_v}\begin{pmatrix} \alpha \varpi^m \\ & 1 \end{pmatrix}$ to be non-zero, we need to have $m \geq -\mathrm{ord} \, \alpha >0$. In this case, 
$$
 \max \{ 1, q^{-m}, |x|\} =  \max \{ 1, |x|\}. 
$$
Next, 
$$
\int_{F_v} \max \{ 1, |x|\}^{-s-w} \psi_v(\alpha x) \, \mathrm dx = \int_\cO \psi_v(\alpha x) \, \mathrm dx + \int_{|x|>1} |x|^{-s-w} \psi_v(\alpha x) \, \mathrm dx; 
$$
the first integral is trivially zero, and the second integral is zero by the lemma. 

\

We also calculate $J_{\pi_v}(\alpha)$ for $\alpha \in \cO^\times$ by hand. By what we saw above, 
\begin{align*}
J_{\pi_v}(\alpha) & = \sum_{m \geq 0} q^{m - ms}W_{\pi_v}\begin{pmatrix}  \varpi^m \\ & 1 \end{pmatrix}  \int_{F_v} \max \{ 1, |x|\}^{-s-w} \psi_v(\alpha x) \, \mathrm dx \\
& = \sum_{m \geq 0} q^{m - ms}W_{\pi_v}\begin{pmatrix}  \varpi^m \\ & 1 \end{pmatrix}  
\left( \int_\cO \psi_v(\alpha x) \, \mathrm dx + \int_{|x|>1} |x|^{-s-w} \psi_v(\alpha x) \, \mathrm dx\right)\\
& = (1- q^{-s-w})\sum_{m \geq 0} q^{m - ms}W_{\pi_v}\begin{pmatrix}  \varpi^m \\ & 1 \end{pmatrix}  \,\,\,\,\, \text{ (after using the lemma)} \\
& = (1- q^{-s-w})\sum_{m \geq 0} q^{m (1/2- (s-1/2))}W_{\pi_v}\begin{pmatrix}  \varpi^m \\ & 1 \end{pmatrix} \\
& =(1- q^{-s-w}) L(s-1/2, \pi_v). 
\end{align*}

\

Suppose that $\alpha \in \cO$, i.e., $\alpha =\varpi^k$ where $k \geq 0$.
Using the integration formula, we have
\begin{align*}
J_{\pi_v}(\alpha) &= \int_{G(F_v)} W_{\pi_v}\left(\begin{pmatrix} \alpha \\ & 1 \end{pmatrix}g\right) \sH(g, s, w)^{-1} \, \mathrm dg\\
&=  \sum_{m \in \Z} q^m \int_{F_v} 
W_{\pi_v} \left(\begin{pmatrix} \alpha \\ & 1 \end{pmatrix}  \begin{pmatrix} 1 & x \\ & 1 \end{pmatrix} \begin{pmatrix} \varpi^m \\ & 1 \end{pmatrix} \right) 
\sH\left(\begin{pmatrix} 1 & x \\ & 1 \end{pmatrix}\begin{pmatrix} \varpi^m \\ & 1 \end{pmatrix}, s, w \right)^{-1}\, \mathrm dx\\
&=  \sum_{m \in \Z} q^m \int_{F_v} 
W_{\pi_v} \left(\begin{pmatrix} \alpha \\ & 1 \end{pmatrix}  \begin{pmatrix} 1 & x \\ & 1 \end{pmatrix} \begin{pmatrix} \varpi^m \\ & 1 \end{pmatrix} \right) 
\sH\left(\begin{pmatrix} 1 & x \\ & 1 \end{pmatrix}\begin{pmatrix} \varpi^m \\ & 1 \end{pmatrix}, s, w \right)^{-1}\, \mathrm dx \\
&=  \sum_{m \in \Z} q^m \int_{F_v} 
W_{\pi_v} \left(  \begin{pmatrix} 1 & \alpha x \\ & 1 \end{pmatrix} \begin{pmatrix} \alpha \varpi^m \\ & 1 \end{pmatrix} \right) 
\sH\left(\begin{pmatrix} 1 & x \\ & 1 \end{pmatrix}\begin{pmatrix} \varpi^m \\ & 1 \end{pmatrix}, s, w \right)^{-1}\, \mathrm dx \\
&=  \sum_{m \in \Z} q^m 
W_{\pi_v} \left(  \begin{pmatrix} \alpha \varpi^m \\ & 1 \end{pmatrix} \right) \int_{F_v}  
\sH \left( \begin{pmatrix} 1 & x \\ & 1 \end{pmatrix}\begin{pmatrix} \varpi^m \\ & 1 \end{pmatrix}, s, w \right)^{-1} \psi_v(\alpha x)\, \mathrm dx.
\end{align*}
Using an explicit computation of Whittaker functions, we have
$$
J_{\pi_v}(\alpha) = \sum_{m \geq -k} q^{\frac{m-k}{2}} \frac{\chi(\varpi)^{m+k+1} - \chi(\varpi)^{-m-k-1}}{\chi(\varpi)-\chi(\varpi)^{-1}} 
\int_{F_v}  \sH\left(\begin{pmatrix} 1 & x \\ & 1 \end{pmatrix}\begin{pmatrix} \varpi^m \\ & 1 \end{pmatrix}, s, w \right)^{-1} \psi_v(\alpha x)\, \mathrm dx.
$$
Then we decompose this infinite sum into two parts:
\begin{align*}
&\sum_{m \geq -k} q^{\frac{m-k}{2}}\chi(\varpi)^{m+k+1} \int_{F_v}  
\sH\left(\begin{pmatrix} 1 & x \\ & 1 \end{pmatrix}\begin{pmatrix} \varpi^m \\ & 1 \end{pmatrix}, s, w \right)^{-1} \psi_v(\alpha x)\, \mathrm dx\\
&= |\alpha|_v^{\frac{1}{2}}\chi(\varpi)\int_{F_v^{\times}} \int_{F_v} |t|^{-\frac{1}{2}} \chi(\alpha t) \ch_{\cO}(\alpha a) 
\sH\left(\begin{pmatrix} 1 & x \\ & 1 \end{pmatrix}\begin{pmatrix} t \\ & 1 \end{pmatrix}, s, w \right)^{-1} \psi_v(\alpha x)\, \mathrm dt^{\times} \mathrm dx\\
&= |\alpha|_v^{\frac{1}{2}}\chi(\varpi) \int_{P(F_v)}  \sH(p, s, w )^{-1} \psi_v(\alpha x) 
|t|^{-\frac{1}{2}} \chi(\alpha t) \ch_{\cO}(\alpha t)\, \mathrm dp,
\end{align*}
where $P$ is a Borel subgroup and $\mathrm dp$ is a right invariant Haar measure.
Similar computation works for the second part, so we have
\begin{align*}
J_{\pi_v}(\alpha) &=  \frac{|\alpha|_v^{\frac{1}{2}}}{\chi(\varpi)-\chi(\varpi)^{-1}}
(\chi(\varpi) \int_{P(F_v)}  \sH(p, s, w )^{-1} \psi_v(\alpha x) |t|^{-\frac{1}{2}} \chi(\alpha t) \ch_{\cO}(\alpha t)\, \mathrm dp\\
& \,\,\,\,\,\,\,\,\,\,\,\,\,\,\, -\chi(\varpi)^{-1}\int_{P(F_v)}  \sH(p, s, w )^{-1} \psi_v(\alpha x) |t|^{-\frac{1}{2}}
\chi(\alpha t)^{-1} \ch_{\cO}(\alpha t)\, \mathrm dp \left. \right)\\
&= \frac{|\alpha|_v^{\frac{1}{2}}}{\chi(\varpi)-\chi(\varpi)^{-1}} (\chi(\varpi) J_{\pi_v}^+(\alpha) - \chi(\varpi)^{-1} J_{\pi_v}^-(\alpha) ).
\end{align*}
Here each integral is given by
$$
J^+_{\pi_v}(\alpha) = \int_{S(F_v)}  \sH(p, s, w )^{-1} \psi_v(\alpha x) |t|^{-\frac{1}{2}} \chi(\alpha t) \ch_{\cO}(\alpha t)\, \mathrm dp,
$$
$$
J^-_{\pi_v}(\alpha) = \int_{S(F_v)}  \sH(p, s, w )^{-1} \psi_v(\alpha x) |t|^{-\frac{1}{2}} \chi(\alpha t)^{-1} \ch_{\cO}(\alpha t)\, \mathrm dp,
$$
where $S$ is the Zariski closure of $P$ in $X$.

This type of integral is studied by the second author and Tschinkel in \cite{TT12}. 
They studied height zeta functions of equivariant compactifications of $P$ under some geometric conditions.

The surface $S$ is isomorphic to $\bP^2 = \{c=0\} \subset \bP^3$. The boundary divisors are $E_1 = \{c=d=0\} = E \cap S$ and $D_1 = \{a = c = 0\} = \tilde{D}\cap S$. Let $\omega$ be a right invariant top degree form on $P$. Let $F = \{ b=c=0\} \subset \bP^3$. Then we have
$$
\mathrm{div}(\omega) = -D_1 - 2E_1, \quad \mathrm{div}(t) = D_1 - E_1, \quad \mathrm{div}(x) = F - E_1.
$$
We denote the Zariski closures of $S$, $F$, $D_1$, and $E_1$ in a smooth integral model $\mathcal X$ of $X$ over $\mathrm{Spec} \, \cO$ 
by $\mathcal S$, $\mathcal F$, $\mathcal D_1$, and $\mathcal E_1$ respectively.
They form integral models of $S$, $F$, $D_1$, and $E_1$.
Let $\rho : S(F_v) \rightarrow \mathcal S(\bF_v)$ be the reduction map mod $\varpi$. Then we have
$$
J^+_{\pi_v}(\alpha) = \sum_{r \in \mathcal S(\bF_v)} 
\int_{\rho^{-1}(r)}  \sH(p, s, w )^{-1} \psi_v(\alpha x) |t|^{-\frac{1}{2}} \chi(\alpha t) \ch_{\cO}(\alpha t)\, \mathrm dp,
:= \sum_{r \in \mathcal S(\bF_v)} J^+_{\pi_v}(\alpha, r).
$$
We analyze $J^+_{\pi_v}(\alpha, r)$ following \cite{TT12}. When $r \in G(\bF_v)$, we have
$$
\sum_{ r \in G(\bF_v)} J^+_{\pi_v}(\alpha, r) = \int_{G(\cO)} \chi(\alpha) \, \mathrm dp = \chi(\alpha).
$$
When $r \in \mathcal (\mathcal D_1 \setminus \mathcal E_1)(\bF_v)$, we have
\begin{align*}
J^+_{\pi_v}(\alpha, r) &= \chi(\alpha) \int_{\rho^{-1}(r)} \sH(p, s, w )^{-1} \psi_v(\alpha x) |t|^{-\frac{1}{2}} 
\chi(\alpha t) \ch_{\cO}(\alpha t)\, \mathrm dt^{\times} \mathrm dx,\\
&=\chi(\alpha) (1-q^{-1})^{-1}\int_{\rho^{-1}(r)} \sH(p, s, w )^{-1} \psi_v(\alpha x) |t|^{-\frac{1}{2}} \chi(t) \ch_{\cO}(\alpha t)\, \mathrm d|\omega|_v\\
&=\chi(\alpha) (1-q^{-1})^{-1}\int_{\rho^{-1}(r)} \sH(p, s-1, w-2 )^{-1} \psi_v(\alpha x) |t|^{-\frac{1}{2}} \chi( t) \ch_{\cO}(\alpha t)\, \mathrm d\tau_v
\end{align*}
where $\mathrm d\tau_v$ is the Tamagawa measure. Then there exist analytic local coordinates $y, z$ on $\rho^{-1}(r) \cong \frak m_v^2$ such that
\begin{align*}
J^+_{\pi_v}(\alpha, r) &= \chi(\alpha) (1-q^{-1})^{-1}\int_{\frak m_v^2} |y|_v^{s-1} |y|_v^{-\frac{1}{2}} \chi(y)\, \mathrm dy \mathrm dz,\\
&= \chi(\alpha) \frac{1}{q} \int_{\frak m_v} |y|_v^{s-\frac{1}{2}} \chi (y) \, \mathrm dy^{\times},\\
&= \chi(\alpha) \frac{1}{q}\sum_{m=1}^{+\infty} (q^{-(s-\frac{1}{2})} \chi(\varpi))^m,\\
&=\chi(\alpha)\frac{1}{q}\frac{q^{-(s-\frac{1}{2})} \chi(\varpi) }{1-q^{-(s-\frac{1}{2})}\chi(\varpi)}.
\end{align*}
If $r \in (\mathcal E_1 \setminus \mathcal D_1 \cup \mathcal F)(\bF_v)$, then there exist local analytic coordinates $y, z$ on $\rho^{-1}(r)$ such that
\begin{align*}
J^+_{\pi_v}(\alpha, r) &=\chi(\alpha)(1-q^{-1})^{-1} \int_{\frak m_v^2} |y|_v^{w-2} \psi_v(\alpha y^{-1}) 
|y|_v^{\frac{1}{2}}\chi(y)^{-1} \ch_{\cO}(\alpha y^{-1}) \, \mathrm dy \mathrm dz,\\
&= \chi(\alpha) \frac{1}{q} \int_{\frak m_v}
|y|_v^{w-\frac{1}{2}}\chi(y)^{-1} \ch_{\cO}(\alpha y^{-1}) \mathrm dy^{\times},\\
&= \chi(\alpha)\frac{1}{q}\sum_{m = 1}^k (q^{-(w-\frac{1}{2})} \chi(\varpi)^{-1} )^m.
\end{align*}
Suppose that $r \in \mathcal (D_1 \cap \mathcal E_1)(\bF_v)$. In this case there exist local analytic coordinates $y, z$ on $\rho^{-1}(r)$ such that
\begin{align*}
J^+_{\pi_v}(\alpha, r) &= \chi(\alpha) (1-q^{-1})^{-1} \int_{\frak m_v^2} |y|_v^{s-1} |z|_v^{w-2} \psi_v \left(\alpha \frac{1}{z}\right) 
|y/z|_v^{-\frac{1}{2}} \chi(y/z) \ch(\alpha y/z) \, \mathrm dy\mathrm dz\\
&= \chi(\alpha) (1-q^{-1}) \int_{\frak m_v^2} |y|_v^{s - \frac{1}{2}} |z|_v^{w-\frac{1}{2}} \psi_v \left(\alpha \frac{1}{z}\right) 
\chi(y/z) \ch(\alpha y/z) \, \mathrm dy^{\times}\mathrm dz^{\times}.
\end{align*}
Now we need the following lemma:
\begin{lemm}
$$
\int_{\cO^\times} \psi_v(\beta x) \, \mathrm dx^{\times} = \left\{
\begin{array}{ll}
1 & \quad \text{if $\beta \in \cO$}\\
-\frac{1}{q-1}  & \quad \text{if $\mathrm{ord}(\beta) = -1$}\\
0  & \quad \text{if $\mathrm{ord}(\beta) \leq -2$}
\end{array} \right.
$$
\end{lemm}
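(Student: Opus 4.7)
The plan is to reduce the multiplicative integral to additive-measure integrals and then invoke the orthogonality of the standard additive character $\psi_v$ on the compact ring $\cO$ and its maximal ideal $\mathfrak m_v$.

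First, I would fix the normalizations consistent with the statement of the lemma: take the additive Haar measure $\mathrm dx$ on $F_v$ so that $\mathrm{vol}(\cO)=1$, and take the multiplicative measure so that $\mathrm{vol}(\cO^\times)=1$. Then on $\cO^\times$ we have $\mathrm dx^\times = \frac{1}{1-q^{-1}}\,\mathrm dx$ since $|x|_v=1$ there. Writing $\cO^\times = \cO \setminus \mathfrak m_v$ gives
\[
\int_{\cO^\times} \psi_v(\beta x)\,\mathrm dx^\times
= \frac{1}{1-q^{-1}}\left( \int_{\cO}\psi_v(\beta x)\,\mathrm dx - \int_{\mathfrak m_v}\psi_v(\beta x)\,\mathrm dx\right).
\]

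Next, I would evaluate each additive integral. Since $\psi_v$ has conductor $\cO$, the character $x \mapsto \psi_v(\beta x)$ restricted to $\cO$ is trivial if $\beta \in \cO$ and a non-trivial character of the compact group $\cO$ otherwise; by orthogonality the first integral equals $\mathbf 1_{\cO}(\beta)$. For the second integral I would substitute $x = \varpi y$ to get $\int_{\mathfrak m_v}\psi_v(\beta x)\,\mathrm dx = q^{-1}\int_{\cO}\psi_v(\beta\varpi y)\,\mathrm dy = q^{-1}\mathbf 1_{\cO}(\beta\varpi)$, which equals $q^{-1}$ when $\mathrm{ord}(\beta) \geq -1$ and vanishes otherwise.

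Finally, combining the three regimes $\mathrm{ord}(\beta)\geq 0$, $\mathrm{ord}(\beta)=-1$, and $\mathrm{ord}(\beta)\leq -2$ gives respectively $\frac{1-q^{-1}}{1-q^{-1}}=1$, $\frac{-q^{-1}}{1-q^{-1}}=-\frac{1}{q-1}$, and $0$, matching the three cases in the lemma. There is no real obstacle here — the entire content is bookkeeping on measure normalizations and the standard Fourier-orthogonality identities on $\cO$ and $\mathfrak m_v$. The only thing that deserves care is pinning down the normalization of $\mathrm dx^\times$ so that the first case evaluates to exactly $1$, which determines the constant $1-q^{-1}$ appearing in the reduction.
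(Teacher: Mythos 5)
Your proof is correct, and in fact the paper does not supply a proof of this lemma at all — it is stated as a standard fact and used immediately in the computation of $J^+_{\pi_v}(\alpha,r)$ at a point of $\mathcal D_1 \cap \mathcal E_1$. Your argument is exactly the standard one: normalize $\mathrm dx^\times$ so that $\mathrm{vol}(\cO^\times)=1$ (forced by the first case of the lemma), reduce to additive measure via $\mathrm dx^\times = (1-q^{-1})^{-1}\mathrm dx$ on $\cO^\times$, split $\cO^\times = \cO \setminus \mathfrak m_v$, and apply character orthogonality on $\cO$ and on $\mathfrak m_v$ (the latter by the substitution $x = \varpi y$). All three case evaluations $\frac{1-q^{-1}}{1-q^{-1}}=1$, $\frac{-q^{-1}}{1-q^{-1}}=-\frac{1}{q-1}$, and $0$ are correct, and you are right that the only delicate point is the multiplicative normalization, which is pinned down by the requirement that the $\beta\in\cO$ case yield exactly $1$.
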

Using this lemma, we have
\begin{align*}
J^+_{\pi_v}(\alpha, r) &= \chi(\alpha) (1-q^{-1}) \sum_{m=1}^k (q^{-(w-\frac{1}{2})}\chi(\varpi)^{-1})^m \int_{\frak m_v} 
|y|_v^{s - \frac{1}{2}} \chi(y)  \, \mathrm dy^\times\\
& \,\,\,\,\,\,\,\,\,\,\,-\chi(\alpha)\frac{1}{q} (q^{-(w-\frac{1}{2})}\chi(\varpi)^{-1})^{k+1}\int_{\frak m_v} 
|y|_v^{s - \frac{1}{2}} \chi(y) \, \mathrm dy^\times \\
&=\chi(\alpha) (1-q^{-1}) \left(\sum_{m=1}^k (q^{-(w-\frac{1}{2})}\chi(\varpi)^{-1})^m \right)\frac{q^{-(s-\frac{1}{2})} \chi (\varpi) }{ 1-q^{-(s-\frac{1}{2})} \chi (\varpi) }\\
&\,\,\,\,\,\,\,\,\,\,\,-\chi(\alpha)\frac{1}{q} (q^{-(w-\frac{1}{2})}\chi(\varpi)^{-1})^{k+1} \frac{q^{-(s-\frac{1}{2})} \chi (\varpi) }{ 1-q^{-(s-\frac{1}{2})} \chi (\varpi) }.
\end{align*}
Now assume that $r \in (\mathcal E_1 \cap \mathcal F)(\bF_v)$. Then there exist local analytic coordinates such that
\begin{align*}
J^+_{\pi_v}(\alpha, r) &= \chi(\alpha) (1-q^{-1})^{-1} \int_{\frak m_v^2}|y|_v^{w-2} \psi_v(\alpha z/y) |y|_v^{\frac{1}{2}} 
\chi(y)^{-1} \ch_{\cO}(\alpha/y) \, \mathrm dy\mathrm dz\\
&= \chi(\alpha) \int_{\frak m_v^2}|y|_v^{w-\frac{1}{2}} \psi_v(\alpha z/y)  \chi(y)^{-1} \ch_{\cO}(\alpha/y) \, \mathrm dy^\times \mathrm dz\\
&= \chi(\alpha)\frac{1}{q}\sum_{m=1}^k (q^{-(w-\frac{1}{2})}\chi(\varpi)^{-1})^m 
\end{align*}
Putting everything together, we obtain the following
\begin{align*}
J^+_{\pi_v}(\alpha) &= \chi(\alpha) + \sum_{r \in (\mathcal D_1 \setminus \mathcal E_1)(\bF_v)} 
J^+_{\pi_v}(\alpha, r) +  \sum_{r \in (\mathcal E_1 \setminus (\mathcal D_1 \cup \mathcal F)(\bF_v)} J^+_{\pi_v}(\alpha, r) \\
& \,\,\,\,\,\,\,\,\,\,\,\,\,\,+  \sum_{r \in (\mathcal D_1 \cap \mathcal E_1)(\bF_v)} J^+_{\pi_v}(\alpha, r) 
+  \sum_{r \in (\mathcal F \cap \mathcal E_1)(\bF_v)} J^+_{\pi_v}(\alpha, r)\\
&= \chi(\alpha) ( 1 + \frac{q^{-(s-\frac{1}{2})} \chi(\varpi) }{1-q^{-(s-\frac{1}{2})}\chi(\varpi)} + \sum_{m = 1}^k (q^{-(w-\frac{1}{2})} \chi(\varpi)^{-1} )^m\\
 &\,\,\,\,\,\,\,\,\,\,\,+ (1-q^{-1}) \left(\sum_{m=1}^k (q^{-(w-\frac{1}{2})}\chi(\varpi)^{-1})^m \right)\frac{q^{-(s-\frac{1}{2})} \chi (\varpi) }{ 1-q^{-(s-\frac{1}{2})} \chi (\varpi) } \\
 &\,\,\,\,\,\,\,\,\,\,\,\,\,\,\,\,- \frac{1}{q} (q^{-(w-\frac{1}{2})}\chi(\varpi)^{-1})^{k+1} \frac{q^{-(s-\frac{1}{2})} \chi (\varpi) }{ 1-q^{-(s-\frac{1}{2})} \chi (\varpi) } )
\end{align*}
The same formula holds for $J^-_{\pi_v}(\alpha)$ by replacing $\chi$ with $\chi^{-1}$. 
From these expressions, we conclude the proof of the following lemma:
\begin{lemm}
Let $0 < \delta < \frac{1}{2}$ be a positive real number such that
$$
q^{-\delta} \leq |\chi(\varpi)| \leq q^\delta.
$$
Then the local integral $J_{\pi_v}(\alpha)$ is holomorphic in the domain $\Re (s) > \frac{1}{2} + \delta$.
\end{lemm}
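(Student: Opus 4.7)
The plan is to read off holomorphy in $s$ directly from the closed-form expansion of $J_{\pi_v}(\alpha)$ derived just above, isolating the finitely many factors that could produce poles and using the hypothesized bounds $q^{-\delta}\le|\chi(\varpi)|\le q^{\delta}$ to push these poles out of the half-plane $\Re(s) > 1/2 + \delta$. The argument is essentially bookkeeping on the explicit formula, and no serious obstacle appears.

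First, I would reduce to the factors $J^\pm_{\pi_v}(\alpha)$ via the identity
$$
J_{\pi_v}(\alpha) = \frac{|\alpha|_v^{1/2}}{\chi(\varpi) - \chi(\varpi)^{-1}}\bigl(\chi(\varpi) J^+_{\pi_v}(\alpha) - \chi(\varpi)^{-1} J^-_{\pi_v}(\alpha)\bigr).
$$
The prefactor is independent of $s$, so $s$-holomorphy of $J_{\pi_v}(\alpha)$ is equivalent to that of $J^\pm_{\pi_v}(\alpha)$. The apparent singularity of this prefactor at $\chi(\varpi)^2 = 1$ is irrelevant to the question of $s$-analyticity (and is in any case removable, since by the symmetry $\chi \leftrightarrow \chi^{-1}$ one has $J^+_{\pi_v}(\alpha) = J^-_{\pi_v}(\alpha)$ when $\chi(\varpi) = \chi(\varpi)^{-1}$, so the numerator $\chi(\varpi) J^+ - \chi(\varpi)^{-1} J^-$ vanishes at $\chi(\varpi) = \pm 1$).

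Next, I would scan the explicit formula for $J^+_{\pi_v}(\alpha)$ term by term. Every piece depending on $w$ is a \emph{finite} geometric sum $\sum_{m=1}^{k} \bigl(q^{-(w-1/2)}\chi(\varpi)^{-1}\bigr)^{m}$, with $k = \mathrm{ord}(\alpha) \ge 0$ a fixed integer; these are polynomials in $q^{-w}$ and hence entire in $w$, contributing nothing to the pole analysis in $s$. The only genuinely infinite geometric piece that involves $s$ is
$$
1 + \frac{q^{-(s-1/2)}\chi(\varpi)}{1 - q^{-(s-1/2)}\chi(\varpi)} = \frac{1}{1 - q^{-(s-1/2)}\chi(\varpi)},
$$
which is holomorphic precisely when $|q^{-(s-1/2)}\chi(\varpi)| < 1$, equivalently $\Re(s) > \tfrac{1}{2} + \log_q|\chi(\varpi)|$. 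The hypothesis $|\chi(\varpi)| \le q^{\delta}$ forces this on $\Re(s) > \tfrac{1}{2} + \delta$.

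Finally, I would repeat the analysis for $J^-_{\pi_v}(\alpha)$, which by the remark following its definition has the same shape as $J^+_{\pi_v}(\alpha)$ with $\chi$ replaced by $\chi^{-1}$. The resulting condition is $\Re(s) > \tfrac{1}{2} + \log_q|\chi(\varpi)^{-1}|$, which is covered by the lower bound $|\chi(\varpi)| \ge q^{-\delta}$ from the hypothesis. Combining these two observations gives holomorphy of $J_{\pi_v}(\alpha)$ on the half-plane $\Re(s) > \tfrac{1}{2} + \delta$. The only mildly delicate point--not a real obstacle--is checking the cancellation at $\chi(\varpi)^2 = 1$ described above, and this is immediate from the manifest antisymmetry of the numerator $\chi(\varpi) J^+ - \chi(\varpi)^{-1} J^-$ under $\chi \leftrightarrow \chi^{-1}$.
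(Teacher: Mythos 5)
Your proof is correct and takes essentially the same approach as the paper, which simply reads off holomorphy from the explicit closed-form expansion of $J^{\pm}_{\pi_v}(\alpha)$: the $w$-dependent pieces are finite geometric sums, the only $s$-dependent denominators are $1 - q^{-(s-1/2)}\chi(\varpi)^{\pm 1}$, and the Ramanujan-type bound $q^{-\delta}\le|\chi(\varpi)|\le q^{\delta}$ pushes the corresponding singularities out of $\Re(s) > 1/2 + \delta$. Your extra remark about the removable singularity at $\chi(\varpi)^2 = 1$ is a nicety the paper leaves implicit but does not affect the argument.
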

We state an approximation of Ramanujan conjecture:
\begin{theo}\cite{LRS99}
 There exists a constant $0 < \delta < \frac{1}{2}$ such that
 for any non-archimedean place $v$ and any unramified principal series $\pi_v = \mathrm{Ind}_P^G(\chi \otimes \chi^{-1})$ arising
 as a local representation of an automorphic cuspidal representation $\pi$ in $L^2_{\mathrm{cusp}}(G(\bQ)\backslash G(\bA))$, we have
 \[
  q_v^{-\delta} <|\chi(\varpi)|< q_v^{\delta}. 
 \]
\end{theo}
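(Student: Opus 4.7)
My plan is to use the Rankin--Selberg machinery together with the Gelbart--Jacquet symmetric square lift. First, I note that the unitarity of $\pi_v$ alone yields the pointwise bound $|\chi(\varpi)|\in(q_v^{-1/2},q_v^{1/2})$, since any unramified unitary principal series of $\PGL_2(\bQ_v)$ is either tempered or in the complementary series; the task is to rule out the possibility that $|\chi(\varpi)|$ approaches $q_v^{\pm 1/2}$ in any uniform sense as $v$ or $\pi$ varies.

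For this, I would invoke the Jacquet--Shalika theorem, which ensures that $L(s,\pi\times\widetilde\pi)$ has a simple pole at $s=1$, is holomorphic elsewhere on $\Re(s)\geq 1$, and has an Euler product absolutely convergent on $\Re(s)>1$. Since $\log L(s,\pi\times\widetilde\pi)$ admits a Dirichlet expansion with non-negative coefficients of the form $|\mathrm{tr}\,\pi_v(\varpi)^k|^2/k$, Landau's theorem pins down the abscissa of absolute convergence to be exactly $1$. In particular, if any individual local factor $L_v(s)$ had a pole at some $s_0$ with $\Re(s_0)>1$, then this pole would propagate to the global product (no cancellation is possible from the always-holomorphic remaining factors), contradicting holomorphy of $L(s,\pi\times\widetilde\pi)$ on $\Re(s)>1$. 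Applied to the standard Rankin--Selberg factor this gives only $|\chi(\varpi)|\leq q_v^{1/2}$, which sits precisely on the borderline, so a stronger $L$-function is needed.

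The strengthening I would use is the Gelbart--Jacquet symmetric square lift: when $\pi$ is non-dihedral, $\mathrm{Sym}^2\pi$ is a cuspidal automorphic representation of $\mathrm{GL}_3(\bA_\bQ)$. Running the same positivity and Landau reasoning with $L(s,\mathrm{Sym}^2\pi\times\widetilde{\mathrm{Sym}^2\pi})$, whose worst local factor contains $(1-|\chi(\varpi)|^4 q_v^{-s})^{-1}$, yields the uniform bound $|\chi(\varpi)|^4\leq q_v$, i.e.\ $|\chi(\varpi)|\leq q_v^{1/4}$, and this already produces a uniform $\delta\leq 1/4<1/2$. The main obstacle is the dihedral case, where $\mathrm{Sym}^2\pi$ fails to be cuspidal; here I would bypass the symmetric square by exploiting that $\pi$ is automorphically induced from a unitary Hecke character $\psi$ of a quadratic extension $K/\bQ$, so that at any unramified non-archimedean place $v$ the parameter $\chi(\varpi)$ is expressed directly in terms of local values of $\psi$, which are unitary and hence force $|\chi(\varpi)|=1$. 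Combining the dihedral and non-dihedral cases produces a single uniform constant $\delta\in(0,1/2)$ as asserted.
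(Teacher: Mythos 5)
The paper does not prove this theorem; it is cited as a known result from the reference \cite{LRS99}, so there is no internal proof to compare against. Your proposed argument is correct, but it follows a genuinely different (and in fact older, ``functorial'') route than the one in \cite{LRS99}. The argument in \cite{LRS99} is purely analytic: it exploits positivity of the Dirichlet coefficients of $L(s, \pi \times \tilde\pi \times \chi)$ averaged over Dirichlet characters $\chi$, together with non-vanishing on the line $\Re(s)=1$, to get the uniform bound $\delta = \frac{1}{2} - \frac{1}{n^2+1}$ for cusp forms on $\mathrm{GL}_n$, which for $n=2$ yields $\delta = 3/10$; it uses no functoriality. Your approach via the Gelbart--Jacquet symmetric square lift and Jacquet--Shalika positivity applied to $L(s,\mathrm{Sym}^2\pi\times\widetilde{\mathrm{Sym}^2\pi})$ is specific to $\mathrm{GL}_2$ (since $\mathrm{Sym}^2$ must land in a group you can control) but actually produces a sharper exponent, namely $|\chi(\varpi)| \leq q_v^{1/4}$, for this case. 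Your dihedral-case treatment is also correct: such $\pi$ is automorphically induced from a unitary Hecke character of a quadratic extension and hence tempered everywhere, giving $|\chi(\varpi)|=1$ automatically. One small clarification worth making explicit: the Jacquet--Shalika convergence argument delivers the non-strict bound $|\chi(\varpi)|^4 \leq q_v$, and the strict uniform inequality with a single $\delta \in (0,1/2)$ asserted by the theorem is obtained simply by choosing any fixed $\delta \in (1/4, 1/2)$, after which both the non-dihedral bound $q_v^{1/4}$ and the dihedral bound $1$ lie strictly below $q_v^{\delta}$. Either your argument or the citation suffices for the paper's purposes, since only the existence of some uniform $\delta < 1/2$ is used.
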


We proceed to compute $J_{\pi_v}(\alpha)$.
\begin{align*}
&\chi(\varpi) J_{\pi_v}^+(\alpha) - \chi(\varpi)^{-1} J_{\pi_v}^-(\alpha) = \chi(\varpi\alpha) - \chi(\varpi\alpha)^{-1}\\
&\,\,\,\,\,\,\,\,\,+L(s-1/2, \pi)\left((\chi(\varpi^2\alpha) - \chi(\varpi^2\alpha)^{-1})q^{-(s-\frac{1}{2})} - (\chi(\varpi\alpha) - \chi(\varpi\alpha)^{-1}) q^{-2(s-\frac{1}{2})}\right)\\
&\,\,\,\,\,\,\,\,\,\,\,\,\,\,\,\,\,\,+ \sum_{m=1}^k q^{-m(w-\frac{1}{2})}\left(\chi(\varpi^{-m+1}\alpha) - \chi(\varpi^{-m+1}\alpha)^{-1}\right)\\
&\,\,\,\,\,\,\,\,\,\,\,\,\,\,\,\,\,\,\,\,\,+ (1-q^{-1})L(s-1/2, \pi) \sum_{m=1}^kq^{-(s-\frac{1}{2}) - m (w-\frac{1}{2})} \\
&\,\,\,\,\,\,\,\,\,\,\,\,\,\,\,\,\,\,\,\,\,\,\,\,\,\left( \chi(\varpi^{-m+2}\alpha) - \chi(\varpi^{-m+2}\alpha)^{-1} - q^{-(s-\frac{1}{2})} (\chi(\varpi^{-m+1}\alpha) - \chi(\varpi^{-m+1}\alpha)^{-1}\right)\\
&\,\,\,\,\,\,\,\,\,\,\,\,\,\,\,\,\,\,\,\,\,\,\,\,\,\,\,\,\,\,\,\,\, -\frac{1}{q}L(s-1/2, \pi)q^{-(s-\frac{1}{2}) - (k+1)(w-\frac{1}{2})}\left( \chi(\varpi^{-k+1}\alpha) - \chi(\varpi^{-k+1}\alpha)^{-1}\right)
\end{align*}
Now using Whittaker functions, we summarize these computations in the following way:
\begin{align*}
J_{\pi_v}(\alpha) &=  \frac{|\alpha|_v^{\frac{1}{2}}}{\chi(\varpi)-\chi(\varpi)^{-1}} (\chi(\varpi) J_{\pi_v}^+(\alpha) - \chi(\varpi)^{-1} J_{\pi_v}^-(\alpha) )\\
&= W_{\pi_v}(\alpha) +L(s-1/2, \pi)\left(q^{-(s-1)}W_{\pi_v}(\alpha\varpi) - q^{-2(s-\frac{1}{2})}W_{\pi_v}(\alpha) \right) 
+ \sum_{m=1}^k  q^{-mw}W_{\pi_v}(\alpha \varpi^{-m})\\
&+ (1-q^{-1}) L(s-1/2, \pi)\left(\sum_{m=1}^kq^{-(s-1)-mw} W_{\pi_v}(\alpha \varpi^{-m+1}) - q^{-2(s-\frac{1}{2}) -mw}W_{\pi_v}(\alpha \varpi^{-m})\right)\\
&\,\,\,\,\,\,\,\,\,\,\,\,\,\,\,\,\,\,-L(s-1/2, \pi) q^{-s-(k+1)w}
\end{align*}
To obtain an estimate of this integral, we use the following lemma:
\begin{lemm}
\label{lemm: whittaker estimates}
Let $0 < \delta < \frac{1}{2}$ be a positive real number such that
$$
q^{-\delta} \leq |\chi(\varpi)| \leq q^\delta.
$$
Then we have
$$
|W_{\pi_v}(\varpi^m)| \leq 2mq^{-m(\frac{1}{2}-\delta)}
$$
\end{lemm}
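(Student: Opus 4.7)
The proof should follow directly from the explicit formula for the spherical Whittaker function recorded earlier in the paper. Recall from \S 2.3 that for the unramified principal series $\pi_v = \mathrm{Ind}_P^G(\chi \otimes \chi^{-1})$, we have
\[
W_{\pi_v}\begin{pmatrix} \varpi^m \\ & 1 \end{pmatrix} = q^{-m/2} \sum_{k=0}^{m} \chi(\varpi)^{k} \chi(\varpi)^{-(m-k)} = q^{-m/2} \sum_{k=0}^{m} \chi(\varpi)^{2k-m}
\]
for $m \geq 0$, and $W_{\pi_v}(\varpi^m) = 0$ for $m < 0$. This is the only ingredient needed; the rest is elementary estimation.

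The plan is to apply the triangle inequality to the sum and then use the hypothesis on $|\chi(\varpi)|$ term-by-term. Set $\alpha = |\chi(\varpi)|$, so that $q^{-\delta} \leq \alpha \leq q^{\delta}$. For each index $k$ with $0 \leq k \leq m$, the exponent $2k-m$ lies in $[-m, m]$, hence $\alpha^{2k-m} \leq \max(\alpha^{m}, \alpha^{-m}) \leq q^{m\delta}$. Summing the $m+1$ terms yields
\[
\left| W_{\pi_v}(\varpi^m) \right| \leq q^{-m/2} \cdot (m+1) \cdot q^{m\delta} = (m+1) \, q^{-m(\frac{1}{2} - \delta)}.
\]
For $m \geq 1$ we have $m+1 \leq 2m$, giving the claimed bound $|W_{\pi_v}(\varpi^m)| \leq 2m \, q^{-m(\frac{1}{2} - \delta)}$. (The case $m = 0$ is trivial since $W_{\pi_v}(1) = 1$, and the case $m < 0$ is vacuous since the Whittaker function vanishes.)

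There is no real obstacle here, and the estimate is essentially sharp in $\delta$: the factor $q^{-m(\frac{1}{2} - \delta)}$ reflects the Satake parameter bound directly, while the linear factor $m$ (really $m+1$) is the length of the geometric-series-like sum arising from the Schur polynomial expression for $W_{\pi_v}(\varpi^m)$. The hypothesis $\delta < \frac{1}{2}$ ensures that $W_{\pi_v}(\varpi^m) \to 0$ as $m \to \infty$, which is what will be used in the convergence arguments for $J_{\pi_v}(\alpha)$ that follow.
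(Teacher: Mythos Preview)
Your proof is correct and is exactly the argument one would expect; the paper in fact states this lemma without proof, so there is nothing to compare against. The explicit Casselman--Shalika formula from \S 2.3, the triangle inequality, and the termwise bound $|\chi(\varpi)|^{2k-m} \leq q^{\delta m}$ are all that is needed, as you have written.

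One small remark: for $m=0$ the stated inequality reads $1 \leq 0$, which is of course false. You flag this by calling the case ``trivial,'' but strictly speaking the lemma as written only holds for $m \geq 1$. This is harmless in the paper's application (Lemma \ref{lemm:estimate}), where the finitely many terms with small exponent are absorbed into a constant, but it would be cleaner to say explicitly that the bound is asserted for $m \geq 1$.
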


We come to the conclusion of this section. We let
\[
 \Lambda = \{(x,y) \in \bR^2 \mid x > 1, \quad x+y >0\},
\]
and 
\[
 \mathsf T_\Lambda = \{(s, w) \in \bC^2 \mid (\Re(s), \Re(w)) \in \Lambda \}.
\]

\begin{lemm}
\label{lemm:estimate}
Let $K \subset \Lambda$ be a compact subset and $\alpha \in \cO\setminus \cO^\times$. 
Then there exists a constant $C_K >0$ which does not depend on $v$, $\alpha$ and $\pi_v$ such that
$$
|J_{\pi_v}(\alpha)| \leq C_K v(\alpha) |\alpha|^\rho\frac{|L(s-1/2, \pi_v)|}{|\zeta(s+w)|},
$$
on $\mathsf T_K$ where $\rho = \max \{-\Re(w) \mid (s, w)\in \mathsf T_K \}$.
\end{lemm}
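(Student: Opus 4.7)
The plan is to substitute the explicit expression for $J_{\pi_v}(\alpha)$ derived in \S\ref{subsec: cusp_non-archimedean} into the left-hand side and bound each of its seven summands in turn. Writing $\alpha = \varpi^k$ with $k = v(\alpha) \geq 1$, every Whittaker value appearing is of the form $W_{\pi_v}(\varpi^j)$ for some $j \in \{k-m,\,k-m+1,\,k,\,k+1\}$; this vanishes when $j<0$, and for $j\geq 0$ it is controlled by Lemma \ref{lemm: whittaker estimates}. Thus the task reduces to estimating a finite sum of explicit powers of $q$ weighted by the $L$-factor $L(s-1/2,\pi_v)$.

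Next I would dispose of the four isolated contributions. The term $W_{\pi_v}(\alpha)$ is directly bounded by $C\, k\, q^{-k(1/2-\delta)}$; the two single-Whittaker terms carrying an $L$-factor are bounded by $|L(s-1/2,\pi_v)|$ times analogous expressions; and the endpoint term $-L(s-1/2,\pi_v)\, q^{-s-(k+1)w}$ contributes $|L(s-1/2,\pi_v)|\cdot q^{-\Re(s)-(k+1)\Re(w)}$. Uniformity on $\mathsf T_K$ follows from compactness of $K$, and the dependence on $k$ matches the claimed RHS up to an absolute constant.

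The substantive work concerns the three $k$-indexed sums, whose prototype is $\sum_{m=1}^k q^{-mw}W_{\pi_v}(\varpi^{k-m})$. Inserting the Whittaker bound and taking absolute values reduces this to estimating
$$
\sum_{m=1}^k (k-m+1)\, q^{-m\Re(w)-(k-m)(1/2-\delta)} = q^{-k\Re(w)}\sum_{j=0}^{k-1}(j+1)\, q^{j(\Re(w)-(1/2-\delta))}
$$
after the re-indexing $j = k-m$. I would then split into cases according to the sign of $\Re(w)-(1/2-\delta)$: in the negative case the inner series is a convergent geometric series with sum bounded in terms of $K$; in the positive case the sum is dominated by its last term up to the polynomial factor $k$; in the borderline subcase it degenerates to an arithmetic sum of size $O(k^2)$. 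In every case the worst-case dependence on $k$ is of the form $k^{O(1)}\cdot q^{-k\cdot (\text{exponent})}$, where the exponent is controlled uniformly on $K$ by $\rho = \max\{-\Re(w):(s,w)\in \mathsf T_K\}$ by definition, and the remaining polynomial factor is absorbed into $v(\alpha)$.

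Finally one has to insert the factor $|L(s-1/2,\pi_v)|/|\zeta_v(s+w)|$ on the right-hand side. On the compact $\mathsf T_K$ both $|L(s-1/2,\pi_v)|$ (by the Ramanujan bound $q_v^{-\delta}<|\chi(\varpi)|<q_v^\delta$ combined with $\Re(s-1/2)>\delta$) and $|\zeta_v(s+w)|^{-1}=|1-q_v^{-(s+w)}|$ are bounded above and below by positive constants, uniformly in $v$ and $\pi_v$, so dividing by the quotient only changes the constant. The main obstacle I anticipate is the case analysis in the inner geometric sum, in particular the borderline subcase where $\Re(w)-(1/2-\delta)$ is close to zero and one must ensure that the extra polynomial growth is faithfully absorbed into the $v(\alpha)$ factor on the right-hand side.
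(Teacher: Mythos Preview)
Your approach is close to the paper's: both substitute the explicit formula for $J_{\pi_v}(\alpha)$ and bound each summand via Lemma~\ref{lemm: whittaker estimates}. The difference lies in how the $m$-indexed sums are handled. You keep the factor $q^{-mw}$ inside the sum and then case-split on the sign of $\Re(w)-(1/2-\delta)$; the paper instead pulls $|q^{-mw}|$ out uniformly (bounding it by $|\alpha|^\rho$, since $m\le k$) and is left with the pure Whittaker sum $\sum_{m=1}^k |W_{\pi_v}(\varpi^{k-m})|$, which it bounds in one stroke. The device is: for any fixed $0<\epsilon<\tfrac12-\delta$ one has
\[
(k-m)\,q^{-(k-m)(1/2-\delta)} \le C\, q^{-(k-m)\epsilon}
\]
uniformly in $q\ge 2$ and $m$, because $j\cdot 2^{-j(1/2-\delta-\epsilon)}$ is bounded; summing over $m$ then gives $\sum_{m=1}^k q^{-(k-m)\epsilon}\le k$, and the factor $v(\alpha)=k$ drops out cleanly.

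This is exactly the trick that sidesteps the obstacle you anticipate. Your case analysis produces a contribution of order $k^2$ when $\Re(w)$ sits at (or near) $1/2-\delta$, and in the ``positive'' case your implied constant is essentially $(r-1)^{-2}$ with $r=q^{\Re(w)-(1/2-\delta)}$, which also degenerates as $r\to 1$. Neither of these can be absorbed into the single factor $v(\alpha)$ on the right-hand side uniformly over a compact $K$ that meets the borderline. By first discarding $q^{-mw}$ crudely and never comparing the two exponential rates, the paper avoids this regime entirely and obtains the linear dependence on $k$ directly. So your plan is correct in spirit and in most of its steps, but the case split is an unnecessary complication that leaves a genuine uniformity gap at the borderline; the fix is precisely the paper's one-line absorption $j\,q^{-j(1/2-\delta)}\le C\,q^{-j\epsilon}$, after which no case distinction is needed.
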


\begin{proof}
Suppose that $(s, w) \in \mathsf T_K$. Let $k = v(\alpha) >0$. We apply Lemma~\ref{lemm: whittaker estimates} to bound $J_{\pi_v}(\alpha)$:
\begin{align*}
|J_{\pi_v}(\alpha)| &\leq |L(s-1/2, \pi_v)||\alpha|^\rho(4|W_{\pi_v}(\alpha)| +|W_{\pi_v}(\alpha \varpi)| + |W_{\pi_v}(\alpha)| 
+ 4\sum_{m=1}^k |W_{\pi_v}(\alpha \varpi^{-m})|\\
&\,\,\,\,\,\,\,\,\,\,\,\,\,\,\,\,\,+\sum_{m=1}^k\left( |W_{\pi_v}(\alpha \varpi^{-m+1})|+ |W_{\pi_v}(\alpha \varpi^{-m})|\right) +1)\\
&\leq |L(s-1/2, \pi_v)| |\alpha|^\rho(C_1 + 5\sum_{m=1}^k 2(k-m)q^{-(k-m)(\frac{1}{2} - \delta)} + \sum_{m=1}^k 2(k+1-m)q^{-(k+1-m)(\frac{1}{2} - \delta)}),
\end{align*}
where $C_1$ is a constant which does not depend on $v$ or $\alpha$. Let $0 < \epsilon < \frac{1}{2} - \delta$. Then there exists another constant $C_2$ such that
\begin{align*}
&5\sum_{m=1}^k 2(k-m)q^{-(k-m)(\frac{1}{2} - \delta)} + \sum_{m=1}^k 2(k+1-m)q^{-(k+1-m)(\frac{1}{2} - \delta)}\\
&\leq C_2 \sum_{m=1}^k q^{-(k-m)\epsilon}   \leq C_2k
\end{align*}
Hence our assertion follows.
\end{proof}

\subsection{$v$ archimedean}
\label{subsec: cusp_archimedean}

Let $\pi_\infty$ be an unramified principal series $\mathrm{Ind}_P^G(|.|^\mu \otimes |.|^{-\mu})$ with a $K_\infty$-fixed vector for some complex number $\mu$. 
Its Whittaker function satisfies
$$
W_{\pi_\infty}\left(\begin{pmatrix}1&x \\ & 1 \end{pmatrix}\begin{pmatrix}t& \\ & 1 \end{pmatrix}
\begin{pmatrix}\cos\theta&-\sin\theta \\ \sin \theta& \cos \theta \end{pmatrix}\right) = \psi_\infty(x) 
W_{\pi_\infty}\begin{pmatrix}t& \\ & 1 \end{pmatrix}
$$
where $\psi_\infty : \bR \rightarrow \mathbb S^1$ is the standart additive character, and
the function $W_{\pi_\infty}\begin{pmatrix}t& \\ & 1 \end{pmatrix}$ is given by
$$
W_{\pi_\infty}\begin{pmatrix}t& \\ & 1 \end{pmatrix} = 2\frac{\pi^{\mu +1/2} |t|^{1/2}}{\Gamma (\mu + 1/2)} K_{\mu}(2\pi|t|),
$$
where $K_\mu(t)$ is the modified Bessel function of the second kind.
See \cite[Proposition 7.3.3]{GH11}.
However, later we normalize the Whittaker function at the archimedean place by $W_{\pi_\infty}(e) = 1$, so we may assume that
\[
 W_{\pi_\infty}\begin{pmatrix}t& \\ & 1 \end{pmatrix} = \frac{|t|^{1/2}}{K_\mu(2\pi)} K_{\mu}(2\pi|t|)
\]

The fact that $K_\mu(t)$ decays exponentially as $t \rightarrow \infty$ gives us the following lemma:
\begin{lemm}
The local height integral $J_{\pi_\infty}(\alpha)$ is holomorphic in the domain defined by $\Re(s) >1$ and $\Re(s+w) >0$.
\end{lemm}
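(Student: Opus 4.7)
The plan is to reduce $J_{\pi_\infty}(\alpha)$ to an explicit double integral over $N(\bR)\times A(\bR)\cong\bR\times\bR^\times$ via the Iwasawa decomposition, and then to obtain holomorphy on $\{\Re(s)>1,\,\Re(s+w)>0\}$ by combining absolute convergence on the smaller region $\{\Re(s+w)>1\}$ with a single integration by parts in the $x$-variable to push the domain down to $\Re(s+w)>0$.

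First, the integrand is right $K_\infty$-invariant, since both the spherical Whittaker function and the heights $\sH_1,\sH_2,|\det|$ are. Combining the integration formula of \S\ref{sect:preliminaries}, the matrix identity
\[
\begin{pmatrix}\alpha\\&1\end{pmatrix}\begin{pmatrix}1&x\\&1\end{pmatrix}\begin{pmatrix}a\\&1\end{pmatrix}=\begin{pmatrix}1&\alpha x\\&1\end{pmatrix}\begin{pmatrix}\alpha a\\&1\end{pmatrix},
\]
the left $N$-equivariance of $W_{\pi_\infty}$ and the explicit $K$-Bessel formula for the Whittaker function give
\[
J_{\pi_\infty}(\alpha)=\frac{|\alpha|^{1/2}}{K_\mu(2\pi)}\int_\bR\int_{\bR^\times}\psi_\infty(\alpha x)\,K_\mu(2\pi|\alpha a|)\,(a^2+x^2+1)^{-(s+w)/2}\,|a|^{s-1/2}\,da^\times\,dx.
\]
On the smaller region $\Re(s+w)>1$, absolute convergence is straightforward: the $x$-integral is dominated by $\int_\bR(1+x^2)^{-\Re(s+w)/2}dx<\infty$; the exponential decay $K_\mu(t)\sim\sqrt{\pi/(2t)}\,e^{-t}$ handles $|a|\to\infty$; and near $a=0$ the asymptotic of $K_\mu$ combined with $|a|^{s-1/2}$ yields integrability provided $\Re(s)>1/2+|\Re\mu|$. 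At this point the archimedean approximation to the Ramanujan conjecture (Selberg--Kim--Sarnak, the archimedean counterpart of the Luo--Rudnick--Sarnak bound used in \S\ref{subsec: cusp_non-archimedean}) is invoked to conclude $|\Re\mu|<1/2$, so $\Re(s)>1$ is sufficient; joint holomorphy of the integrand and local uniformity of the convergence then deliver holomorphy of $J_{\pi_\infty}(\alpha)$ on $\{\Re(s)>1,\,\Re(s+w)>1\}$.

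To continue to the strip $0<\Re(s+w)\le 1$, the next step is one integration by parts in $x$, using $\alpha\neq 0$. For $\Re(s+w)>0$ the boundary terms vanish and one obtains
\[
\int_\bR e^{2\pi i\alpha x}(a^2+x^2+1)^{-(s+w)/2}dx=\frac{s+w}{2\pi i\alpha}\int_\bR e^{2\pi i\alpha x}\,x\,(a^2+x^2+1)^{-(s+w)/2-1}dx.
\]
The new integrand in $x$ decays like $|x|^{-\Re(s+w)-1}$, and the substitution $u=x^2$ bounds its $L^1$-norm in $x$ by a constant multiple of $(a^2+1)^{-\Re(s+w)/2}$, uniformly in $a$. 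Combined with the earlier estimates on $K_\mu(2\pi|\alpha a|)|a|^{s-1/2}$, Fubini now delivers absolute, locally uniform convergence of the full double integral on $\{\Re(s)>1,\,\Re(s+w)>0\}$, and the claimed holomorphy follows.

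The main technical input is the appeal to the archimedean approximation to Ramanujan to guarantee $|\Re\mu|<1/2$; the integration-by-parts step and the bookkeeping of boundary terms are standard and pose no serious obstacle.
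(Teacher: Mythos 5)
Your proof is correct, and it takes a genuinely different route from the paper's. The paper first invokes Proposition~4.2 of Chambert-Loir--Tschinkel for the initial region $\Re(s)>1$, $\Re(w)>0$, then passes to an integral over $S(\bR)$ (the closure of the Borel $P$ in $X$), introduces a partition of unity subordinate to a cover of $S(\bR)$, and analyzes each piece in local analytic coordinates near the boundary divisors $E_1$ and $D_1$; the crucial step is a monomial change of variables ($y=y'z'$, $z=z'$) near the corner $E_1\cap D_1$, which exhibits the exponent $s+w-1$ and hence convergence for $\Re(s+w)>0$. You instead write the Iwasawa integral out explicitly, with integrand $\psi_\infty(\alpha x)K_\mu(2\pi|\alpha a|)(a^2+x^2+1)^{-(s+w)/2}|a|^{s-1/2}$, establish absolute convergence on $\{\Re(s)>1,\ \Re(s+w)>1\}$, and then perform a single integration by parts in the nilpotent variable $x$ --- exploiting $\alpha\neq 0$ and the oscillation of $\psi_\infty(\alpha x)$ --- to extend the $x$-integral and hence the double integral to $\Re(s+w)>0$. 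Both approaches hinge on the exponential decay of the $K$-Bessel function for $|a|\to\infty$ and on $|\Re\mu|<1/2$ (unitarity already gives this; the Ramanujan approximation you cite is more than needed) to control $a\to 0$. The paper's treatment is more geometric and mirrors its nonarchimedean computation, so it transfers to other compactifications; your argument is more elementary and explicit, and isolates exactly where the oscillatory factor $\psi_\infty(\alpha x)$ buys the extra half-plane. One small point of hygiene: the IBP identity should be derived on $\Re(s+w)>1$ (where both sides converge absolutely) and the right-hand side then furnishes the analytic continuation to $\Re(s+w)>0$; your phrasing ("for $\Re(s+w)>0$ the boundary terms vanish and one obtains") slightly conflates establishing the identity with using it, but the intent is clear and the resulting argument is sound.
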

\begin{proof}
Proposition 4.2 of \cite{volume} shows that $J_{\pi_\infty}(\alpha)$ is holomorphic in $\Re(s) > 1$ and $\Re(w) > 0$. 
We extend this domain by using the rapidly decaying function $K_\mu$.
Since our height is invariant under the action of $K_\infty = \mathrm{SO}_2(\bR)$, using the integration formula we have
\begin{align*}
J_{\pi_\infty}(\alpha) &= \int_{G(\bR)} W_{\pi_\infty}\left(\begin{pmatrix}\alpha & \\ & 1 \end{pmatrix} g\right)\sH(g, s, w)^{-1} \, \mathrm dg \\
&= \int_{P(\bR)} W_{\pi_\infty}\left(\begin{pmatrix}\alpha & \\ & 1 \end{pmatrix} p\right)\sH(p, s, w)^{-1} \, \mathrm d_lp\\
&= \int_{S(\bR)} W_{\pi_\infty}\left(\begin{pmatrix}\alpha t  & \\ & 1 \end{pmatrix}\right) \sH(p. s, w)^{-1} \psi_\infty(\alpha x) |t|^{-1} \, 
\mathrm dx \mathrm dt^\times,
\end{align*}
where $S$ is the Zariski closure of $P$ in $X$.
Let $\{U_\eta\}$ be a sufficiently fine finite open covering of $S(\bR)$ and consider a partition of unity $\theta_\eta$ subordinate to this covering.
Using this partition of unity, we obtain
\begin{equation}
\label{eqn:partition}
J_{\pi_\infty}(\alpha) = \sum_\theta \int_{S(\bR)} W_{\pi_\infty}\left(\begin{pmatrix}\alpha t  & \\ & 1 \end{pmatrix}\right) \sH(p. s, w)^{-1} 
\psi_\infty(\alpha x) |t|^{-1} \theta_\eta \, \mathrm dx \mathrm dt^\times.
\end{equation}
Suppose that $U_\eta$ meets with $E_1$, but not $D_1$. Then the term corresponding to $\eta$ in \eqref{eqn:partition} looks like
$$
\int_{\bR^2} \Phi(y, z, s, w) |y|^{w-1} W_{\pi_\infty}(\alpha y^{-1})\, \mathrm dy \mathrm dz,
$$
where $\Phi(y, z, s, w)$ is a bounded function with a compact support. 
Since the Whittaker function decays rapidly, this function is holomorphic everywhere.
Note that $\alpha$ is non-zero.

Next assume that $U_\eta$ contains the intersection of $E_1$ and $D_1$. In this case, the term in \eqref{eqn:partition} looks like
$$
\int_{\bR^2} \Phi(y, z, s, w) |z|^{s-1}|y|^{w-1} W_{\pi_\infty}(\alpha z/y)\, \mathrm dy \mathrm dz,
$$
Applying a change of variables by $z = z'$ and $y = y'z'$, it becomes
$$
\int_{\bR^2} \Phi(y'z', z', s, w) |z'|^{s+w-1}|y'|^{w-1} W_{\pi_\infty}(\alpha/y')\, \mathrm dy \mathrm dz.
$$
This integral is absolutely convergent if $\Re(s+w) >0$.
\end{proof}

\begin{lemm}
\label{lemm:vectorfields}
Let $\partial^X$ be a left invariant differential operator on $X$. Then the function
$$
\frac{\partial^X (\sH_\infty (g, s, w)^{-1})}{\sH_\infty (g, s, w)^{-1}}
$$
is a smooth function on $X(\bR)$.
\end{lemm}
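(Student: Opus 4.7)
The plan is to interpret $\sH_\infty(g,s,w)^{-1}$ as the value at $g$ of a smooth Hermitian metric applied to the canonical section of the line bundle $s\tilde{D}+wE$, and then to reduce to the classical fact that logarithmic derivatives of such sections along vector fields tangent to their zero divisors are smooth across the divisor.

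First, I would observe that the archimedean formulas for $\sH_{\tilde{D},\infty}$ and $\sH_{E,\infty}$ come from smooth metrics $\|\cdot\|_{\tilde{D}}$ and $\|\cdot\|_E$ on $\cO(\tilde{D})$ and $\cO(E)$ over $X(\bR)$: with $\sigma_{\tilde{D}}$ and $\sigma_E$ the canonical sections one has $\sH_{\tilde{D},\infty}^{-1}=\|\sigma_{\tilde{D}}\|_{\tilde{D}}$ and $\sH_{E,\infty}^{-1}=\|\sigma_E\|_E$, hence
\begin{equation*}
\log \sH_\infty(g,s,w)^{-1} \;=\; s\log\|\sigma_{\tilde{D}}\|_{\tilde{D}} \;+\; w\log\|\sigma_E\|_E,
\end{equation*}
which is smooth on $U(\bR)$ with logarithmic singularities along $\tilde{D}\cup E$. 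The lemma thus becomes the assertion that $\partial^X\log \sH_\infty^{-1}$ extends smoothly across the boundary.

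Next, I would exploit the equivariance. The right $G$-action on $X$ preserves $U$ and therefore the boundary $\tilde{D}\cup E$; since $\PGL_2$ is connected and the two boundary components are non-isomorphic, each of $\tilde{D}$ and $E$ is preserved individually. By convention a left-invariant differential operator on $G$ belongs to the enveloping algebra $U(\mathfrak{g})$ acting by infinitesimal right translation, and this action extends smoothly from $U$ to $X$; its vector-field generators are therefore smooth on $X(\bR)$ and tangent to both $\tilde{D}$ and $E$.

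The core local computation is then standard. If $V$ is a smooth vector field tangent to a smooth divisor $D$ cut out locally by $f$, then $Vf=fh$ for a smooth $h$, and hence $V\log|f|^2=h+\bar h$ is smooth across $\{f=0\}$; combined with the local form $\|\sigma_D\|^2=|f|^2 e^{-2\varphi}$ of a smooth metric, this shows $V\log\|\sigma_D\|_D$ is smooth on $X(\bR)$. The same argument with product local equations $f_1 f_2$ handles the normal-crossing intersection $\tilde{D}\cap E$, so summing the two contributions gives the lemma when $\partial^X$ is a first-order operator. For higher orders one inducts on the order via the identity
\begin{equation*}
\frac{V_0(\sH_\infty^{-1}\cdot g)}{\sH_\infty^{-1}} \;=\; V_0 g \;+\; g\cdot V_0\log \sH_\infty^{-1},
\end{equation*}
applied with $g=(V_1\cdots V_n\sH_\infty^{-1})/\sH_\infty^{-1}$: both terms are smooth by the inductive hypothesis and the first-order case. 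The principal obstacle is the first step—verifying that the explicit archimedean height formulas truly come from smooth global metrics on $X(\bR)$ across both divisors, including the exceptional $E$ and the strict transform $\tilde{D}$. This requires a brief explicit check in affine charts covering the blow-up of the line $l$, but once settled the remainder of the argument is purely formal.
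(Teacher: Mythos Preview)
Your proposal is correct and is exactly the standard argument. The paper itself gives no proof here: it simply refers the reader to the proof of Proposition~2.2 in \cite{VecIII} (Chambert-Loir--Tschinkel), and the argument there is precisely the one you outline---heights arise from smooth metrics on the boundary line bundles, left-invariant vector fields (generating the right $G$-action) extend to smooth vector fields on $X$ tangent to each boundary component, hence $V\log\|\sigma_D\|$ is smooth across $D$, and one inducts on the order of $\partial^X$ via the Leibniz identity you wrote.
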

\begin{proof}
See the proof of Proposition 2.2 in \cite{VecIII}.
\end{proof}
Here we use an iterated integration idea. Let 
$$
h= \begin{pmatrix} 1\\ & -1 \end{pmatrix}, \,\,\, v_+ = \begin{pmatrix} 0 & 1 \\ 0 & 0 \end{pmatrix}, \,\,\, v_- = \begin{pmatrix} 0 & 0 \\ 1 & 0 \end{pmatrix}, 
$$
and think of them as elements of the universal enveloping algebra of the complexified Lie algebra of $\PGL_2(\bR)$. The Casimir operator is given by 
$$
\Omega = \frac{h^2}{4} - \frac{h}{2} + v_+ v_-. 
$$
Then we have 
$$
\Omega.W_{\pi_\infty} = \lambda_\pi W_{\pi_\infty}. 
$$
As a result for any integrable smooth function $f$ such that $f$ is right $K_\infty$-invariant and
and its iterated derivatives are also integrable, we have 
\begin{align*}
\int_{G(\bR)} f(g) W_{\pi_\infty}\left(\begin{pmatrix}\alpha & \\ & 1\end{pmatrix}g\right) \, \mathrm dg 
&= \lambda_\pi^{-N} \int_{G(\bR)} \Omega^N f(g) W_{\pi_\infty}\left(\begin{pmatrix}\alpha & \\ & 1\end{pmatrix}g\right) \,\mathrm dg
\\&= \lambda_\pi^{-N} \int_{P(\bR)} \Omega^N f(p) W_{\pi_\infty}\begin{pmatrix} \alpha t & \\ & 1 \end{pmatrix}\psi(\alpha x) \, \mathrm d_lp\\
&= \lambda_\pi^{-N} \alpha^{-M} \int_{P(\bR)} \left(\frac{\partial }{\partial x}\right)^M(\Omega^N f(p)) W_{\pi_\infty}\begin{pmatrix} \alpha t & \\ & 1 \end{pmatrix}\psi_\infty(\alpha x) \,\mathrm d_lp
\end{align*}
Note that $\Omega^N f(g)$ is right $K_\infty$-invariant because $\Omega$ is an element of the center of the universal enveloping algebra. Hence we have
$$
\left|\int_{G(\bR)} f(g) W_{\pi_\infty}(g) \, dg \right| \leq \lambda_\pi^{-N} \alpha^{-M}\left\| \left(\frac{\partial }{\partial x}\right)^M\Omega^Nf \right\|_1 \cdot \| W_{\pi_\infty} \|_\infty. 
$$
We will apply this simple idea to our height function
$$
\sH(g, s, w)^{-1}. 
$$
We define the domain $\Lambda$ by
\[
 \Lambda = \{(x,y) \in \bR^2 \mid x >1, \quad x+y > 2\}.
\]

\begin{lemm}
Fix positive integers $N$ and $M$. Let $K$ be a compact set in $\Lambda$.
Then there exists a constant $C$ depending on $N, M, K$ such that we have
\[
\left| \left(\frac{\partial }{\partial x}\right)^M \Omega^N \sH(g, s, w)^{-1} \right| < C\sH(g, \Re(s), \Re(w))^{-1},
\]
for all $g\in G(\bR)$ and $s, w$ such that $(s, w) \in \mathsf T_K$.
\end{lemm}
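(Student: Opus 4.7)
The plan is to iterate Lemma \ref{lemm:vectorfields}. Both $\partial/\partial x$ (the left-invariant vector field corresponding to $v_+$) and the Casimir $\Omega = h^2/4 - h/2 + v_+v_-$ are polynomials in left-invariant vector fields on $G(\bR)$, of orders $M$ and $2N$ respectively; since $X$ is an equivariant compactification of $G=\PGL_2$, each such vector field extends to a smooth vector field on $X(\bR)$.

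Writing $\sH(g,s,w)^{-1} = \sH_1(g)^{w-s}\sH_2(g)^{-s-w}|\det g|^s$, for any left-invariant vector field $\partial^X$ on $X$ one computes
\[
\frac{\partial^X\sH(g,s,w)^{-1}}{\sH(g,s,w)^{-1}} = (w-s)\frac{\partial^X\sH_1(g)}{\sH_1(g)} - (s+w)\frac{\partial^X\sH_2(g)}{\sH_2(g)} + s\frac{\partial^X|\det g|}{|\det g|}.
\]
Applying Lemma \ref{lemm:vectorfields} for three different choices of $(s,w)$ and taking linear combinations, each of the three logarithmic derivatives on the right extends to a smooth function on $X(\bR)$. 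By a straightforward induction on the order of the differential operator, using the Leibniz rule and the fact that left-invariant vector fields preserve smoothness on $X(\bR)$, one then obtains
\[
\left(\frac{\partial}{\partial x}\right)^M \Omega^N \sH(g,s,w)^{-1} = P(s,w;g)\,\sH(g,s,w)^{-1},
\]
where $P(s,w;g)$ is a polynomial in $(s,w)$ of degree at most $M+2N$ whose coefficients extend to smooth functions on $X(\bR)$.

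Since $X(\bR)$ is compact (as $X$ is smooth projective) and $\mathsf T_K$ is compact, $P(s,w;g)$ is uniformly bounded on $G(\bR)\times \mathsf T_K$. The desired inequality then follows from the elementary identity
\[
\bigl|\sH(g,s,w)^{-1}\bigr| = \sH(g,\Re(s),\Re(w))^{-1},
\]
which holds because each archimedean local factor of $\sH$ is a positive real number raised to a complex power. The only substantive ingredient is Lemma \ref{lemm:vectorfields}, which handles the potentially singular behavior of the logarithmic derivatives across the boundary divisors $\tilde D$ and $E$; once that smoothness is in hand, the rest of the argument reduces to compactness of $X(\bR)$ and of $\mathsf T_K$.
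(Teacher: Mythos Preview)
Your identification of $\partial/\partial x$ as the left-invariant vector field corresponding to $v_+$ is incorrect, and this is the heart of the matter. In the Iwasawa coordinates $p=\begin{pmatrix}1&x\\&1\end{pmatrix}\begin{pmatrix}a\\&1\end{pmatrix}$, the left-invariant field attached to $v_+$ is $a\,\partial/\partial x$, because $p\exp(tv_+)=\begin{pmatrix}1&x\\&1\end{pmatrix}\begin{pmatrix}a&at\\&1\end{pmatrix}$. The operator $\partial/\partial x$ is generated by $\exp(tv_+)\,p$, i.e.\ it is the \emph{right}-invariant field for $v_+$ on $P$ (and on $G$). Since $X$ is only a one-sided (right-action) compactification of $G$, Lemma~\ref{lemm:vectorfields} gives you smooth extension of left-invariant operators to $X(\bR)$, but says nothing about right-invariant ones; there is no reason for $\partial/\partial x$ to extend smoothly across the boundary of $X$. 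This is precisely why the paper stresses, in capitals, that $\partial/\partial x$ is RIGHT invariant on $P$: the point is that the closure $S$ of $P$ in $X$ happens to be a \emph{bi}equivariant compactification of $P$, so right-invariant fields on $P$ do extend to $S(\bR)$. After applying $\Omega^N$ (which is bi-invariant, so covered by Lemma~\ref{lemm:vectorfields}), one restricts to $P$ and uses the biequivariance of $S$ to handle $(\partial/\partial x)^M$. Your argument, by contrast, tries to run everything on $X$, and that does not go through.

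A second slip: $\mathsf T_K=\{(s,w):(\Re s,\Re w)\in K\}$ is a tube over a compact base and is not compact in $\bC^2$, so you cannot conclude that a polynomial $P(s,w;g)$ is uniformly bounded there. Each differentiation produces linear factors in $s,w$, so $P$ has genuine polynomial growth in $|\Im s|,|\Im w|$; the paper's formulation shares this looseness, but your argument appeals to compactness explicitly and thus breaks.
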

\begin{proof}
First note that $\partial / \partial x$ is a RIGHT invariant differential operator on $P = \left\{ \begin{pmatrix}a&x \\ &1 \end{pmatrix}\right\}$. Moreover the surface $S$ is a biequivariant compactfication of $P$ so that the differential operator $\partial / \partial x$ extends to $S$. Now our assertion follows from Lemma~\ref{lemm:vectorfields}.
\end{proof}

\

Combining these statements with the computation of $\| \sH(g, s, w)^{-1} \|_1 = \langle \sZ, 1 \rangle_\infty$ gives the following lemma:
\begin{lemm}
\label{lemm:archimedean}
Fix positive integers $N$ and $M$. Let $K$ be a compact set in $\Lambda$.
Then there exists a constant $C$ only depending on $N, M, K$, but not $\pi_\infty$ and $\alpha$ such that
$$
|J_{\pi_\infty}(\alpha)| <  C\lambda_\pi^{-N} \alpha^{-M}  
$$
whenever $(s,w) \in \mathsf T_K$. 
\end{lemm}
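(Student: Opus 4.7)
The proof follows the iterated integration by parts scheme outlined in the paragraphs immediately preceding the statement. Starting from
$$J_{\pi_\infty}(\alpha) = \int_{G(\bR)} \sH(g, s, w)^{-1}\, W_{\pi_\infty}\!\left(\begin{pmatrix} \alpha & \\ & 1 \end{pmatrix}g\right) \mathrm dg,$$
I would first use that $W_{\pi_\infty}$ is an eigenfunction of the Casimir, $\Omega W_{\pi_\infty} = \lambda_\pi W_{\pi_\infty}$. Writing $W_{\pi_\infty} = \lambda_\pi^{-N}\Omega^N W_{\pi_\infty}$ and integrating by parts $N$ times against the (formally) self-adjoint operator $\Omega$ transfers $\Omega^N$ onto $\sH(g,s,w)^{-1}$ while extracting the factor $\lambda_\pi^{-N}$ outside the integral. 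Since $\Omega$ lies in the center of the enveloping algebra, $\Omega^N \sH^{-1}$ is still right $K_\infty$-invariant, so the remaining integral collapses to one over $P(\bR)$ via the Iwasawa decomposition.

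Next, using the Iwasawa decomposition $g = n(x)a(t)k$ with $n(x) = \begin{pmatrix} 1 & x \\ & 1\end{pmatrix}$ and $a(t) = \begin{pmatrix} t & \\ & 1 \end{pmatrix}$, the Whittaker transformation law yields
$$W_{\pi_\infty}\!\left(\begin{pmatrix}\alpha & \\ & 1\end{pmatrix} n(x)a(t)k\right) = \psi_\infty(\alpha x)\, W_{\pi_\infty}\!\begin{pmatrix}\alpha t & \\ & 1\end{pmatrix}.$$
I would then integrate by parts $M$ times against $\partial/\partial x$; each differentiation of $\psi_\infty(\alpha x)$ produces a factor of $2\pi i \alpha$, so after $M$ iterations the factor $(2\pi i \alpha)^{-M}$ is pulled outside while $(\partial/\partial x)^M$ falls on $\Omega^N\sH(g,s,w)^{-1}$. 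The preceding lemma (together with the remark that $\partial/\partial x$ is right-invariant on $P$ and extends to the biequivariant compactification $S\subset X$) guarantees that no boundary contributions arise.

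Applying the pointwise bound from the lemma just above, $|(\partial/\partial x)^M \Omega^N \sH(g,s,w)^{-1}| \leq C_K\, \sH(g, \Re(s), \Re(w))^{-1}$ for $(s,w)\in\mathsf T_K$, I would then estimate
$$|J_{\pi_\infty}(\alpha)| \leq C_K\,|\lambda_\pi|^{-N}|\alpha|^{-M}\, \|W_{\pi_\infty}\|_\infty \int_{G(\bR)} \sH(g, \Re s, \Re w)^{-1}\,\mathrm dg.$$
The integral converges uniformly on the compact set $K \subset \Lambda$ by the archimedean computation of \S\ref{sect:one-dimensional}, and the sup-norm $\|W_{\pi_\infty}\|_\infty$ is uniformly bounded (independently of $\pi_\infty$) by virtue of the normalization $W_{\pi_\infty}(e)=1$, the rapid decay of $K_\mu$ at infinity, its behavior at the origin, and the fact that Selberg-type bounds confine the archimedean parameter $\mu$ to a fixed vertical strip. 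This produces the asserted estimate with a constant $C$ depending only on $N$, $M$ and $K$.

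The main technical point is the uniformity in $\pi_\infty$ of the bound on $\|W_{\pi_\infty}\|_\infty$: one must ensure that the normalizing denominator $K_\mu(2\pi)$ stays bounded away from zero as $\mu$ varies over the admissible set of archimedean Satake parameters. Once this uniformity is in hand, everything else is a routine combination of the displayed iterated integration by parts identity and the pointwise differential-operator bound from the preceding lemma.
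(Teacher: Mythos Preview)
Your outline follows the paper's integration-by-parts scheme, but the final estimate has a genuine gap. You bound
\[
|J_{\pi_\infty}(\alpha)| \leq C_K\,|\lambda_\pi|^{-N}|\alpha|^{-M}\, \|W_{\pi_\infty}\|_\infty \int_{G(\bR)} \sH(g, \Re s, \Re w)^{-1}\,\mathrm dg
\]
and claim the last integral converges uniformly on $K\subset\Lambda$ by the archimedean computation of \S\ref{sect:one-dimensional}. But $\Lambda=\{x>1,\ x+y>2\}$ permits $\Re(w)\le 0$, and the archimedean height integral requires $\Re(w)>0$ to converge (the $x$-integral $\int_\bR(x^2+1)^{-(\Re w+1)/2}\,\mathrm dx$ diverges otherwise). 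So the bound you wrote is infinite on part of $\Lambda$. The paper repairs this by \emph{not} bounding $|W_{\pi_\infty}(\alpha t)|$ by its sup-norm: when $\Re(w)$ is not positive one writes $|W_{\pi_\infty}(\alpha t)|\le |\alpha t|^{-m}\,\|u^m W_{\pi_\infty}(u)\|_\infty$ for a suitable positive integer $m$, and then applies Lemma~\ref{lemm: integral_computation} with $\tau=-m$ to obtain the convergent integral $\sqrt{\pi}\,\Gamma(\tfrac{\Re s - m -1}{2})\Gamma(\tfrac{\Re w + m}{2})/\Gamma(\tfrac{\Re(s+w)}{2})$. The constraint $\Re(s+w)>2$ ensures that $\Re(s)-m>1$ whenever $\Re(w)+m$ is a small positive number, so the shift is always available.

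There is a second, less serious, inaccuracy. You assert that $\|W_{\pi_\infty}\|_\infty$ is uniformly bounded in $\pi_\infty$ once one normalizes $W_{\pi_\infty}(e)=1$. In fact the appendix estimates (Stirling for $K_\mu(2\pi)$ together with the Harcos bound for $K_\mu(x)$) only give $\|W_{\pi_\infty}\|_\infty\ll |\Im(\mu)|^2$ and, in the shifted case, $\|t^mW_{\pi_\infty}(t)\|_\infty\ll |\Im(\mu)|^{m+2}$. These polynomial losses are harmless because $\lambda_\pi=\tfrac14-\mu^2$, so they are absorbed by taking $N$ a bit larger; but the uniform-boundedness claim as stated is false and should be replaced by this polynomial control.
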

\begin{proof}
 We need to obtain an upper bound for the integral
 \[
  \int_{P(\bR)} \sH(g, \Re(s), \Re(w))^{-1} W_{\pi_\infty}\begin{pmatrix}\alpha t & \\ & 1\end{pmatrix} \, \mathrm dp_l.
 \]
 First note that by an approximation of Ramanujan conjecture, there exists $0 < \delta < 1/2$ such that $|\Re(\mu)| \leq \delta$.
 Let $\epsilon = \min \{\Re(s+w) - 2 \mid (s, w) \in \mathsf T_K\} > 0$.
 
 Suppose that $\Re(w) \geq \epsilon/2$. Then the above integral is bounded by
 \[
  \sqrt{\pi} \frac{\Gamma(\frac{\Re(s)-1}{2})\Gamma(\frac{\Re(w)}{2})}{\Gamma(\frac{\Re(s+w)}{2})} \left\|W_{\pi_\infty}\right\|_\infty.
 \]
 It follows from results in Section~\ref{sec: appendix} that we have
 \[
 \left\|W_{\pi_\infty}\right\|_\infty \ll |\Im(\mu)|^2.
 \]
 Finally note that $\lambda_\pi = (1/4 - \mu^2)$, so our assertion follows.
 
 Next suppose that there exists a positive integer $m$ such that $\epsilon/2 \leq \Re(w) + m \leq 1+ \epsilon/2$.
 In this situation, we have $\Re(s) -m \geq 1+\epsilon/2$.
 It follows from Lemma~\ref{lemm: integral_computation} that 
 \[
  \left|\int_{P(\bR)} \sH(g, \Re(s), \Re(w))^{-1} W_{\pi_\infty}\begin{pmatrix}\alpha t & \\ & 1\end{pmatrix} \, \mathrm dp_l\right|
  \leq |\alpha|^{-m}\sqrt{\pi} \frac{\Gamma(\frac{\Re(s)-m-1}{2})\Gamma(\frac{\Re(w)+m}{2})}{\Gamma(\frac{\Re(s+w)}{2})}
  \left\|t^mW_{\pi_\infty}(t)\right\|_\infty.
 \]
 Again it follows from Section~\ref{sec: appendix} that 
 \[
  \left\|t^mW_{\pi_\infty}(t)\right\|_\infty \ll |\Im(\mu)|^{m+2}.
 \]
 Thus our assertion follows.
\end{proof}

\subsection{The adelic analysis}
\label{subsec: cusp_adelic}
As $\sZ$ is right $K$-invariant, 
the only  automorphic cuspidal representations that contribute to the automorphic Fourier expansion of $\sZ$ are those 
that have a $K$-fixed vector. 
Let $\pi = \otimes_v' \pi_v$ be an automorphic cuspidal representation of $\PGL(2)$.  
By a theorem of Jacquet-Langlands every component $\pi_v$ is generic.
See \cite[Chapter 2, Proposition 9.2]{JL70}.
Let $W_{\pi_v}$ be the $K_v$-fixed vector in the space of $\pi_v$ normalized so that $W_{\pi_v}(e) = 1$. 
Let $\phi_\pi$ be the $K$-fixed vector in the space of $\pi$ normalized so that $\langle \phi_\pi, \phi_\pi \rangle =1$.
Then 
$$
W_{\phi_\pi} = W_{\phi_\pi} (e)\cdot \prod_v W_{\pi_v}. 
$$
 Note that 
 $$
 W_{\phi_\pi} (e) \ll \| \phi_\pi \|_\infty. 
 $$
 Indeed, we have
$$
W_{\phi_\pi}(g)  = \int_{ \bQ \backslash \bA} \phi \left(\begin{pmatrix} 1 & x \\ & 1 \end{pmatrix} g\right) \, \psi (-x) \, \mathrm dx. 
$$
Hence we conclude
$$
|W_{\phi_\pi}(g)| \leq \int_{ \bQ \backslash \bA} \left| \phi \left(\begin{pmatrix} 1 & x \\ & 1 \end{pmatrix} g\right) \right|  \, \mathrm dx 
\leq \| \phi \|_{\infty}  \, \mathrm{vol} (\bQ \backslash \bA).
$$
We have 
$$
\langle \sZ(s, .), \phi_\pi \rangle 
= \sum_{\alpha \in F^\times} \int_{G(\bA)}W_{\phi_\pi}\left(\begin{pmatrix} \alpha \\ & 1 \end{pmatrix} g\right) \sH(\mathbs, g)^{-1}\, \mathrm dg
$$ 
$$
= 2W_{\phi_\pi}(e) \sum_{\alpha=1}^\infty  \prod_v J_{\pi_v}(\alpha). 
$$
Note that for any non-archimedean place $v$, we have $J_{\pi_v}(\alpha) = 0$ if $\alpha \not\in \cO_v$.
Also note that $J_{\pi_v}(\alpha) = J_{\pi_v}(-\alpha)$.
\begin{lemm}
The series 
$$
J_\pi=\sum_{\alpha=1}^\infty  \prod_{v \leq \infty} J_{\pi_v}(\alpha)
$$
is absolutely convergent for $\Re(s)  >3/2 +\epsilon+ \delta$ and $\Re(s+w) > 2+\epsilon$ for any real number $\epsilon >0$.  
Furthermore, let $\Lambda= \{(x,y)\in \bR^2 \mid x >3/2 + \delta, \quad x+y > 2\}$ and $K$ be a compact set in $\Lambda$.
Then there exists a constant $C_{K, N}>0$ independent of $\pi$ such that 
\[
|J_\pi| \leq \lambda_\pi^{-N}C_{K, N}\left|\frac{L(s-1/2, \pi)}{\zeta(s+w)}\right|,
\]
for any $(s,w) \in \mathsf T_K$.
\end{lemm}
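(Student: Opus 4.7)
The plan is to exploit the Euler factorisation of $\prod_v J_{\pi_v}(\alpha)$ by separating the finitely many non-archimedean places dividing $\alpha$ from all other places. First I would observe that, by the local computation carried out in \S\ref{subsec: cusp_non-archimedean}, at any finite $v$ with $\alpha \in \cO_v^\times$ one has
\[
J_{\pi_v}(\alpha) \;=\; (1 - q_v^{-s-w})\, L(s-1/2, \pi_v) \;=\; \frac{L(s-1/2, \pi_v)}{\zeta_v(s+w)},
\]
while $J_{\pi_v}(\alpha) = 0$ whenever $\alpha \notin \cO_v$. Consequently the contribution of all finite $v \nmid \alpha$ assembles precisely into the global Euler product $L(s-1/2,\pi)/\zeta(s+w)$, whose convergence in $\Re(s) > 3/2 + \delta$ is guaranteed by the approximation to Ramanujan.

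For the finitely many places $v \mid \alpha$, I would invoke Lemma~\ref{lemm:estimate} to obtain
\[
|J_{\pi_v}(\alpha)| \;\leq\; C_K\, v(\alpha)\, |\alpha|_v^{\rho}\, \frac{|L(s-1/2,\pi_v)|}{|\zeta_v(s+w)|},
\]
with constant $C_K$ uniform in $v$ and $\pi$. Multiplying over all finite places, the local $L_v/\zeta_v$-factors recombine into $|L(s-1/2,\pi)/\zeta(s+w)|$, while the prefactor $\prod_{v \mid \alpha} C_K\, v(\alpha)\, |\alpha|_v^{\rho}$ simplifies via the product formula $\prod_{v<\infty} |\alpha|_v = \alpha^{-1}$ to $\alpha^{-\rho}\, C_K^{\omega(\alpha)} \prod_{p \mid \alpha} v_p(\alpha)$. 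This combinatorial overhead is majorised by $C_K^{\omega(\alpha)}\, d(\alpha)$, which by standard divisor bounds is $O_\epsilon(\alpha^\epsilon)$ for any $\epsilon > 0$.

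It then remains to incorporate the archimedean estimate of Lemma~\ref{lemm:archimedean}, namely $|J_{\pi_\infty}(\alpha)| \leq C\, \lambda_\pi^{-N}\, \alpha^{-M}$ with $C$ depending only on $N$, $M$, $K$. This yields
\[
\Bigl|\prod_v J_{\pi_v}(\alpha)\Bigr| \;\ll_\epsilon\; \lambda_\pi^{-N}\, \alpha^{-M + \epsilon - \rho}\, \biggl|\frac{L(s-1/2, \pi)}{\zeta(s+w)}\biggr|.
\]
Since $\rho = \max\{-\Re(w) : (s,w)\in \mathsf{T}_K\}$ is finite on the compact set $K$, choosing $M$ sufficiently large in terms of $\rho$ and $\epsilon$ makes the series $\sum_{\alpha \geq 1} \alpha^{-M+\epsilon-\rho}$ absolutely convergent. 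The resulting constant $C_{K,N}$ is independent of $\pi$ because $C_K$, the divisor bounds, and the archimedean constant are all uniform in $\pi$.

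The main obstacle I anticipate is the bookkeeping in the second step: tracking the local $L$-factors so that they reassemble cleanly into the global ratio $L(s-1/2,\pi)/\zeta(s+w)$, and verifying that the divisor-type overhead coming from the places $v \mid \alpha$ aggregates to only $O_\epsilon(\alpha^\epsilon)$, so that it is absorbed by the polynomial decay in $\alpha$ supplied by the archimedean integration-by-parts argument. The conceptual content is already packed into Lemmas~\ref{lemm:estimate} and~\ref{lemm:archimedean}; what remains is to ensure that every implicit constant is genuinely uniform in $\pi$ and that the Euler product manipulations are valid throughout the stated range.
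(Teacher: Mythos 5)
Your proposal is correct and follows essentially the same route as the paper: use the clean local Euler factor $J_{\pi_v}(\alpha)=L_v(s-1/2,\pi_v)/\zeta_v(s+w)$ at places $v\nmid\alpha$, invoke Lemma~\ref{lemm:estimate} at the finitely many bad places to reassemble the global $L$-function with a multiplicative overhead $\prod_{p\mid\alpha}C_K\,v_p(\alpha)$, and then use the archimedean decay $\alpha^{-M}$ from Lemma~\ref{lemm:archimedean} to sum over $\alpha$. The only cosmetic difference is that the paper controls the overhead by expanding $\sum_\alpha F(\alpha)\alpha^{-M}$ as an Euler product with $F(p^k)=C_1 k$, whereas you bound the same quantity via $C_K^{\omega(\alpha)}d(\alpha)=O_\epsilon(\alpha^\epsilon)$; you are in fact slightly more careful in tracking the $\alpha^{-\rho}$ factor from the product formula, which the paper implicitly absorbs into the choice of $M$.
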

\begin{proof}
To see this, define a multiplicative function $F(\alpha)$ by 
$$
F(p^k) = C_1k
$$
where $C_1$ is a constant in Lemma~\ref{lemm:estimate}.
By Lemma~\ref{lemm:estimate} and Lemma ~\ref{lemm:archimedean}, we know that 
$$
|\prod_{v \leq \infty} J_{\pi_v}(\alpha)| \leq \left| C_2 \lambda_\pi^{-N}\alpha^{-M}\frac{L(s-1/2, \pi)}{\zeta(s+w)}\right| F(\alpha).
$$
Thus we need to discuss the convergence of $\sum_\alpha \frac{F(\alpha)}{\alpha^M}$. Formally this infinite sum is given by
$$
\prod_p\left( 1+ \sum_{k=1}^\infty \frac{C_1k}{p^{kM}}\right).
$$
Then the product
$$
\prod_p\left( 1+ \frac{C_1}{p^{M}}\right)
$$
absolutely converges as soon as $M > 1$. Thus we need to show the convergence of
\begin{equation}
\label{eqn:infinite sum}
\sum_p \sum_{k=2}^\infty  \frac{k}{p^{kM}}.
\end{equation}
Let $\epsilon >0$ be a sufficiently small positive real number. Then there exists a constant $C_3$ such that
$$
\frac{k}{p^{k \epsilon}} \leq C_3 
$$
for any $k$ and $p$. Then the infinite sum~\eqref{eqn:infinite sum} is bounded by
$$
C_3\sum_p \sum_{k=2}^\infty  \frac{1}{p^{k(M-\epsilon)}} = C_3\sum_p \frac{p^{-2(M-\epsilon)}}{1 - p^{-(M-\epsilon)}} < C(1-2^{-(M-\epsilon)})^{-1} \sum_p p^{-2(M-\epsilon)}.
$$
The last infinite sum converges if $M >1$ and $\epsilon$ is sufficiently small.
\end{proof}

Now look at the cuspidal contribution 

$$
Z(s)_{\mathrm{cusp}} = Z(s, e)_{\mathrm{cusp}} = \sum_\pi \langle \sZ(s, .), \phi_\pi\rangle \phi_\pi(e). 
$$

\begin{lemm}
\label{lemm: cuspidalpart}
The series $Z(s)_{\mathrm{cusp}}$ is absolutely uniformaly convergent for $\Re(s) > 3/2+\delta$ and $\Re (s+w) > 2$, 
and defines a holomorphic function in that region. 
\end{lemm}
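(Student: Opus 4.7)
The plan is to combine the bound on $J_\pi$ from the previous lemma with standard polynomial bounds in the spectral parameter $\lambda_\pi$ and Weyl's law, choosing $N$ large enough to force absolute convergence of the spectral sum uniformly on compacta.

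First I would rewrite $\langle \sZ(s,\cdot), \phi_\pi \rangle \phi_\pi(e) = 2W_{\phi_\pi}(e)\,\phi_\pi(e)\, J_\pi$ and use the elementary bound $|W_{\phi_\pi}(e)| \leq \|\phi_\pi\|_\infty \cdot \mathrm{vol}(\bQ\backslash \bA)$ recalled in \S\ref{subsec: cusp_adelic}, together with the pointwise estimate $|\phi_\pi(e)| \leq \|\phi_\pi\|_\infty$. Combined with the previous lemma, this yields, on any compact subset $\mathsf T_K$ of the region $\{\Re(s) > 3/2 + \delta,\ \Re(s+w) > 2\}$ and for any integer $N$,
$$
|\langle \sZ(s,\cdot), \phi_\pi\rangle \phi_\pi(e)|
\;\leq\; C_{K,N}\, \|\phi_\pi\|_\infty^{\,2}\, \lambda_\pi^{-N}\, \left|\frac{L(s - 1/2, \pi)}{\zeta(s+w)}\right|.
$$

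Next I would collect three standard polynomial bounds in $\lambda_\pi$. (i) A Sobolev-type bound: since $\phi_\pi$ is an $L^2$-normalized eigenfunction of the invariant Laplacian with eigenvalue $\lambda_\pi$, elliptic regularity together with rapid decay of cusp forms in the cusps gives $\|\phi_\pi\|_\infty \ll \lambda_\pi^{A}$ for some $A>0$. (ii) A bound on the $L$-function: on the line $\Re(s - 1/2) > 1 + \delta$ we are to the right of the abscissa of absolute convergence for $L(s-1/2,\pi) = \prod_{v < \infty} L(s-1/2, \pi_v)$ (using the approximation to the Ramanujan conjecture at every finite place stated above), so each finite Euler factor is uniformly bounded and the product converges uniformly; thus $|L(s-1/2, \pi)|$ is bounded independently of $\pi$ on $\mathsf T_K$. (iii) Weyl's law for the spherical spectrum of $\PGL_2(\bQ)\backslash \PGL_2(\bA)$: the number of $\pi$ contributing to $L^K_{\mathrm{cusp}}$ with $\lambda_\pi \leq T$ is $O(T)$.

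Combining (i)--(iii) we obtain, uniformly for $(s,w) \in \mathsf T_K$,
$$
\sum_\pi |\langle \sZ(s,\cdot), \phi_\pi\rangle \phi_\pi(e)|
\;\ll_{K,N}\; \sum_\pi \lambda_\pi^{\,2A - N}.
$$
Fixing $N > 2A + 2$, Weyl's law makes the right-hand side finite. Hence $Z(s)_{\mathrm{cusp}}$ converges absolutely and uniformly on compact subsets of the claimed region, and since each summand is holomorphic there (\S\ref{subsec: cusp_non-archimedean} and \S\ref{subsec: cusp_archimedean}), the uniform limit is holomorphic by Weierstrass.

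The main obstacle is making sure that none of the factors in the bound grow faster than polynomially in $\lambda_\pi$; this rests on having polynomial (not merely finite) control of the $\infty$-norm of a spherical cusp form and on the approximation to Ramanujan in order to place the $L$-function in the region of absolute convergence. Both are available and are not specific to the present situation.
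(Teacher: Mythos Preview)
Your proposal is correct and follows essentially the same approach as the paper. The paper's proof is terser: it arrives at the same bound $\sum_\pi \|\phi_\pi\|_\infty^2\,\lambda_\pi^{-N}$ (after bounding $|L(s-1/2,\pi)|$ uniformly in the region of absolute convergence) and then simply cites Theorem~7.4 of \cite{PGL2} for the convergence of this series, whereas you unpack that reference into its two ingredients (polynomial sup-norm bound and Weyl's law).
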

\begin{proof}
We have 
\begin{align*}
\sum_\pi & |\langle \sZ(s, .), \phi_\pi\rangle|\cdot |\phi_\pi(e)| \\
& \leq \sum_\pi  |2W_{\phi_\pi}(e) \phi_\pi(e)| \sum_{\alpha = 1}^\infty |J_{\pi_\infty}(\alpha)|. \left| \prod_{v < \infty} J_{\pi_v}(\alpha) \right| \\
& \ll  \sum_\pi  \| \phi_\pi \|_\infty^2\cdot \lambda_\pi^{-N}  \left|\frac{L(s-1/2, \pi)}{\zeta(s+w)}\right| \\
& \ll C_\delta  \sum_\pi  
\| \phi_\pi \|_\infty^2 \cdot \lambda_\pi^{-N} |L(s-1/2, \pi)| \\
& \ll C_\delta 
\sum_\pi  \| \phi_\pi \|_\infty^2 \cdot \lambda_\pi^{-N} 
\end{align*}
by the rapid decay of the $K$-Bessel function. 
This last series converges for $N$ large. See Theorem 7.4 of \cite{PGL2}. 
\end{proof}

\section{Step three: Eisenstein contribution}\label{sect:eisenstein}

For ease of reference we set 
$$
E(s, g)_{\mathrm{c}} = \chi_P(g)^{s+ 1/2} + \frac{\Lambda(2s)}{\Lambda(2s+1)} \chi_P(g)^{-s + 1/2}
$$
and 
$$
E(s, g)_{\mathrm{nc}}= \frac{1}{\zeta(2s+1)}\sum_{\alpha \in \bQ^\times} W_s\left(\begin{pmatrix} \alpha \\ & 1 \end{pmatrix} g \right),
$$
where $s\in \mathbb C$, $g \in G(\bA)$, and $\chi_P, W_s$ are introduced in Section~\ref{subsect:eisenstein}.
Note that we have the Fourier expansion of the Eisenstein series:
\[
E(s, g) =  E(s, g)_{\mathrm{c}} + E(s, g)_{\mathrm{nc}}.
\]

We will be considering integrals of the form 
$$
\sZ(s,g)_{\mathrm{eis}} : = \frac{1}{4\pi } \int_\bR \left( \int_{G(\bQ) \backslash G(\bA)}\sZ(s, h) \overline{E(it, h)} \, \mathrm dh \right)E(it, g) \, \mathrm dt. 
$$
First we examine the inner integral. As usual we have 
$$
\int_{G(\bQ) \backslash G(\bA)}\sZ(s, h) \overline{E(it, h)} \mathrm dh = 
 \int_{G(\bA)}E(-it, g)_{\mathrm{c}}  \sH(\mathbs, g)^{-1} \, \mathrm dg
 $$
 $$
 + \int_{G(\bA)}E(-it, g)_{\mathrm{nc}}  \sH(\mathbs, g)^{-1} \, \mathrm dg.
$$

\subsection{The non-constant term}
We let 
\begin{align*}
I_{\mathrm{nc}}(\mathbs, t) &= \int_{G(\bA)}E(-it, g)_{\mathrm{nc}}  \sH(\mathbs, g)^{-1} \, \mathrm dg \\
&= \frac{1}{\zeta(-2it+1)}\sum_{\alpha \in \bQ^\times} \prod_v \int_{G(\bQ_v)} W_{-it, v}\left(\begin{pmatrix} \alpha \\ & 1 \end{pmatrix} g_v \right)
\sH_v(\mathbs, g_v)^{-1} \, \mathrm dg_v.
\end{align*}
We define
\[
 J_{t, v}(\alpha) = \int_{G(\bQ_v)} W_{-it, v}\left(\begin{pmatrix} \alpha \\ & 1 \end{pmatrix} g_v \right)
\sH_v(\mathbs, g_v)^{-1} \, \mathrm dg_v.
\]

\

For any non-archimedean place $v$, $W_{s, v}$ satisfies the exactly same formula
for the Whittaker function $W_{\pi_v}$ of a local unramified principal seires $\pi_v = \mathrm{Ind}_P^G(\chi \otimes \chi^{-1})$
by replacing $\chi$ by $|\cdot|_v^s$, i.e., we have
\[
 W_{s, v}\left(\begin{pmatrix}1 & x \\ & 1\end{pmatrix}\begin{pmatrix}a &  \\ & 1\end{pmatrix} k\right)
 = \psi_v (x)  W_{s, v} \begin{pmatrix} a &  \\ & 1\end{pmatrix}
\]
for any $x \in \bQ_v$, $a \in \bQ^\times$, and $k \in K_v$. Moreover we have
\[
 W_{s, v} \begin{pmatrix} \varpi^m &  \\ & 1\end{pmatrix} = \begin{cases} q^{-m/2} \sum_{k=0}^m |\varpi^k|_v^s |\varpi^{m-k}|_v^{-s} & m \geq 0 ; \\ 
0 & m < 0.
\end{cases}
\]
Hence the computation in Section~\ref{subsec: cusp_non-archimedean} can be applied to $J_{t, v}(\alpha)$ without any modification.
We summarize this fact as the following lemma:
\begin{lemm}
 Let $v$ be a non-archimedean place.
 The function $J_{t, v}(\alpha)$ is holomorphic when $\Re(s) > \frac{1}{2}$.
 If $\alpha \not\in \cO_v$, then $J_{t, v}(\alpha) = 0$.
 If $\alpha \in \cO_v^\times$, then we have
 \[
  J_{t, v}(\alpha) = \frac{\zeta_v(s+it -1/2)\zeta_v(s-it-1/2)}{\zeta_v(s+w)}.
 \]
 In general, let $\Lambda = \{(x,y)\in \bR^2 \mid x > 3/2, \quad x+y >2\}$ and $K$ be a compact subset in $\Lambda$. 
 We define $\rho = \max \{-\Re(w) \mid (s,w) \in \mathsf T_K\}$.
 Tnen there exists a constant $C_K >0$ which does not depend on $t, v, \alpha$ such that
 \[
  |J_{t, v}(\alpha)| < C v(\alpha)|\alpha|^\rho \frac{|\zeta_v(s+it -1/2)\zeta_v(s-it-1/2)|}{|\zeta_v(s+w)|}.
 \]

\end{lemm}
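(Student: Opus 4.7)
The plan is to observe that the entire machinery developed in Section~\ref{subsec: cusp_non-archimedean} transfers verbatim to the Eisenstein setting. Precisely, the displayed recursion for $W_{s,v}(\operatorname{diag}(\varpi^m,1))$ coincides with that for the cuspidal Whittaker function $W_{\pi_v}$ attached to $\pi_v=\mathrm{Ind}_P^G(\chi\otimes\chi^{-1})$ under the substitution $\chi=|\cdot|_v^{-it}$, i.e.\ $\chi(\varpi)=q^{it}$. Hence every $p$-adic identity derived for $J_{\pi_v}(\alpha)$ in Section~\ref{subsec: cusp_non-archimedean} applies to $J_{t,v}(\alpha)$ with $\chi$ replaced by $|\cdot|_v^{-it}$; no fresh integration needs to be performed.

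The crucial benefit is that for real $t$ one has $|\chi(\varpi)|=|q^{it}|=1$, so the parameter $\delta$ appearing in the Ramanujan-type bounds (Lemma before Lemma~\ref{lemm: whittaker estimates}) may be taken to be $0$. This immediately yields holomorphy of $J_{t,v}(\alpha)$ in the region $\Re(s)>1/2$ by the analog of Lemma~\ref{lemm: whittaker estimates}'s preceding lemma, and removes the $\delta$ shift in the convergence region appearing in the cuspidal case.

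For the explicit evaluations, when $\alpha\notin\cO_v$ the vanishing follows from the same argument used in Section~\ref{subsec: cusp_non-archimedean}: the only nonvanishing summands require $m\geq -\operatorname{ord}\alpha>0$, after which the $x$-integral is either an integral of $\psi_v(\alpha x)$ over $\cO_v$ (which is zero) or reduces to the auxiliary integral $I(\eta,y)$ with $y=\alpha\notin\cO_v$. For $\alpha\in\cO_v^\times$ we reuse the identity $J_{\pi_v}(\alpha)=(1-q^{-s-w})L(s-1/2,\pi_v)$ derived in Section~\ref{subsec: cusp_non-archimedean}; substituting $\chi(\varpi)=q^{it}$ gives
\[
L(s-1/2,\pi_v)=\frac{1}{(1-q^{-(s-1/2-it)})(1-q^{-(s-1/2+it)})}=\zeta_v(s+it-1/2)\,\zeta_v(s-it-1/2),
\]
and combining this with $1-q^{-s-w}=\zeta_v(s+w)^{-1}$ yields the stated closed form.

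Finally, the uniform estimate is precisely Lemma~\ref{lemm:estimate} applied with $\delta=0$ and with $L(s-1/2,\pi_v)$ replaced by $\zeta_v(s+it-1/2)\zeta_v(s-it-1/2)$; the region $\Lambda=\{x>3/2,\ x+y>2\}$ here is a subset of the region considered in Lemma~\ref{lemm:estimate}, so the same constant-free-of-$v,\alpha,\pi_v$ argument delivers a constant independent of $t,v,\alpha$ as well. The \emph{only} mildly delicate point is verifying that the constant produced by Lemma~\ref{lemm:estimate} does not secretly depend on $\chi$ through its values on $\cO_v^\times$ — but since $\chi=|\cdot|_v^{-it}$ is unitary, $|\chi(\alpha)|\equiv 1$, so the uniformity is automatic. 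This is the main (and essentially the only) nontrivial point to check, and it is immediate.
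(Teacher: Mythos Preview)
Your proposal is correct and follows essentially the same approach as the paper: the paper simply notes that $W_{s,v}$ satisfies the identical formula as the cuspidal Whittaker function $W_{\pi_v}$ upon replacing $\chi$ by $|\cdot|_v^s$, so that the computations of Section~\ref{subsec: cusp_non-archimedean} apply to $J_{t,v}(\alpha)$ without modification, and then states the lemma as a summary. Your additional remarks (that $|\chi(\varpi)|=1$ permits $\delta=0$, hence holomorphy for $\Re(s)>1/2$, and that unitarity ensures uniformity of the constant in $t$) make explicit what the paper leaves implicit, but the argument is the same.
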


For the archimedean place $v = \infty$, the Whittaker function is given by
\[
 W_{s, \infty}\begin{pmatrix}a& \\ & 1 \end{pmatrix} = 2\frac{\pi^{s+1/2 } |a|^{1/2}}{\Gamma (s+1/2)} K_{s}(2\pi|a|).
 \]
Moreover for the Casimir operator $\Omega$, we have
\[
 \Omega W_{s, \infty} = \left(\frac{1}{2}+ s\right)\left(\frac{1}{2}- s\right)W_{s, \infty}.
\]

As the discussion of Section~\ref{subsec: cusp_archimedean}, we conclude
\begin{lemm}
The function $J_{t, \infty}(\alpha)$ is holomorphic in the domain $\Re(s) >1$ and $\Re(s+w)>0$.
Fix positive integers $N$ and $M$. Let $K$ be a compact set in $\Lambda$.
Then there exists a constant $C_{N, M, K}$ only depending on $N, M, K$, but not $t$ and $\alpha$ such that
$$
|J_{t, \infty}(\alpha)| <  C_{N, M, K}(1+t^2)^{-N} \alpha^{-M} 
$$
whenever $(s,w) \in \mathsf T_K$.
\end{lemm}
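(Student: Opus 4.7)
The plan is to repeat, essentially verbatim, the arguments of Section~\ref{subsec: cusp_archimedean} that produced Lemma~\ref{lemm:archimedean}, replacing the cuspidal Whittaker function $W_{\pi_\infty}$ by $W_{-it,\infty}$. The structural parallels needed are: (a) on the diagonal torus, $W_{-it,\infty}$ is a multiple of $K_{-it}(2\pi|a|)$, so it decays exponentially as $|a|\to\infty$; (b) it is a Casimir eigenfunction with eigenvalue $(1/2+it)(1/2-it) = 1/4 + t^2$; and (c) $W_{-it,\infty}(n g) = \psi_\infty(x) W_{-it,\infty}(g)$ for $n = \bigl(\begin{smallmatrix} 1 & x \\ & 1\end{smallmatrix}\bigr)$. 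These are precisely the three properties of $W_{\pi_\infty}$ that the cuspidal proof exploited.

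\emph{Holomorphy.} I would mimic the proof of the holomorphy lemma of Section~\ref{subsec: cusp_archimedean} word-for-word. After the Iwasawa reduction to an integral over $S(\bR)$, a partition of unity on $S(\bR)$ reduces the question to finitely many local charts. In every chart except the one meeting $D_1\cap E_1$, the rapid decay of $K_{-it}$ away from the origin guarantees absolute convergence for all $s,w$. In the chart around $D_1\cap E_1$, the change of variables $z = z'$, $y = y'z'$ introduces the factor $|z'|^{s+w-1}|y'|^{w-1}$, and the first factor is integrable iff $\Re(s+w) > 0$, while the $|y'|^{w-1}$ singularity at the origin is killed by the exponential decay of $W_{-it,\infty}(\alpha/y')$ provided $\Re(s) > 1$ forces enough regularity elsewhere in the expansion (exactly as in the cuspidal case).

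\emph{The quantitative bound.} I would combine two integration-by-parts devices. Since $\sH(\cdot,s,w)^{-1}$ is right-$K_\infty$-invariant and $\Omega$ is central in the universal enveloping algebra, the eigenvalue identity extracts a factor $(1/4+t^2)^{-N}$; integrating by parts $M$ times in the variable $x$ of the Borel against the character $\psi_\infty(\alpha x)$ extracts a factor $\alpha^{-M}$. The intermediate lemma already established — that $(\partial/\partial x)^M \Omega^N \sH(g,s,w)^{-1}$ is bounded in absolute value by a compact-set-dependent constant times $\sH(g, \Re s, \Re w)^{-1}$ on $\mathsf T_K$ — then reduces the estimation to
\[
\int_{P(\bR)} \sH(p,\Re s,\Re w)^{-1}\,\Bigl|W_{-it,\infty}\!\begin{pmatrix}\alpha t & \\ & 1\end{pmatrix}\Bigr|\,\mathrm d_l p.
\]
This is handled by the same case split as in the proof of Lemma~\ref{lemm:archimedean}: if $\Re(w) \geq \epsilon/2$ one applies Lemma~\ref{lemm: integral_computation} directly together with the sup-norm bound $\|W_{-it,\infty}\|_\infty \ll (1+|t|)^{2}$ from Section~\ref{sec: appendix}; otherwise one shifts $w \mapsto w+m$ for an appropriate positive integer $m$, invokes Lemma~\ref{lemm: integral_computation}, and uses the bound $\|t^m W_{-it,\infty}(t,1)\|_\infty \ll (1+|t|)^{m+2}$.

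\emph{Expected obstacle.} The main difference from the cuspidal setting is normalization: the Eisenstein Whittaker function is not normalized to value one at the identity but carries the factor $\Gamma(1/2 - it)^{-1}$; Stirling shows this has at most polynomial growth in $|t|$, which is harmless as it is absorbed into the factor $(1/4 + t^2)^{-N}$ on choosing $N$ large in terms of $M$ and $K$. Uniformity in $\alpha$ and $t$ follows because every ingredient of the proof — the bound on $(\partial/\partial x)^M\Omega^N\sH^{-1}$, Lemma~\ref{lemm: integral_computation}, and the sup-norm bounds from the appendix — is already uniform in these parameters.
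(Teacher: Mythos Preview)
Your approach is exactly the paper's: the paper simply writes ``As the discussion of Section~\ref{subsec: cusp_archimedean}, we conclude\ldots'' and states the lemma without further proof, so your plan to rerun the holomorphy argument via partition of unity on $S(\bR)$ and the quantitative bound via the Casimir/$\partial_x$ integration-by-parts trick is precisely what is intended.

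One correction to your ``expected obstacle'' paragraph: Stirling does \emph{not} show that $|\Gamma(1/2-it)^{-1}|$ has polynomial growth --- in fact $|\Gamma(1/2-it)| \sim \sqrt{2\pi}\,e^{-\pi|t|/2}$, so the reciprocal grows exponentially. The reason the argument still works is that the appendix bound on the Bessel function is stated for $e^{\pi|\Im\mu|/2}K_\mu(x)$, so the exponential growth of $\Gamma(1/2-it)^{-1}$ is exactly cancelled by the built-in exponential decay of $K_{-it}$; what remains is genuinely polynomial in $|t|$ and is then absorbed into $(1/4+t^2)^{-N}$ as you say. With that adjustment your sketch is correct.
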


Finally the discussion of Section~\ref{subsec: cusp_adelic} leads to the following lemma:
\begin{lemm}
 The series $I_{\mathrm{nc}}(\mathbs, t)$
is absolutely convergent for $\Re(s)  >3/2 +\epsilon$ and $\Re(s+w) > 2+\epsilon$ for any real number $\epsilon >0$.  
Furthermore, let $K$ be a compact subset in $\Lambda$.
Then there is a $C_{K, N}>0$ independent of $t$ such that 
\[
|I_{\mathrm{nc}}(\mathbs, t)| \leq (1+t^2)^{-N}C_{K, N}\frac{\zeta(\Re(s)-1/2)^2}{|\zeta(-2it + 1)\zeta(s+w)|}.
\]
\end{lemm}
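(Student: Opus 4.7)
The proof follows the same template as the cuspidal analysis in \S\ref{sect:cuspidal}, and in fact is technically easier because the Satake parameter is explicit: at a non-archimedean place $v$ the Whittaker function $W_{-it,v}$ agrees with that of the unramified principal series $\mathrm{Ind}_P^G(|\cdot|_v^{-it}\otimes|\cdot|_v^{it})$, so the preceding two lemmas give local factors $J_{t,v}(\alpha)$ with exactly the same shape as the $J_{\pi_v}(\alpha)$ of Section \ref{subsec: cusp_non-archimedean} and \ref{subsec: cusp_archimedean}. The plan is therefore: (i) use the factorisation $I_{\mathrm{nc}}(\mathbs,t) = \zeta(-2it+1)^{-1} \sum_{\alpha\in\bQ^\times}\prod_v J_{t,v}(\alpha)$; (ii) reduce the sum over $\alpha\in\bQ^\times$ to a sum over positive integers (using $J_{t,v}(\alpha)=0$ for $\alpha\notin\cO_v$ at finite places, together with the symmetry $J_{t,\infty}(\alpha)=J_{t,\infty}(-\alpha)$ which comes from the right $K_\infty$-invariance of the height); (iii) plug in the local bounds from the previous two lemmas and collect the product.

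More concretely, let $K\subset\Lambda$ be compact, fix $N$, and choose $\rho = \max\{-\Re(w)\mid(s,w)\in \mathsf T_K\}$ and $M$ large. Combining the non-archimedean bound $|J_{t,v}(\alpha)|\le C_K v(\alpha)|\alpha|_v^{\rho}|\zeta_v(s+it-1/2)\zeta_v(s-it-1/2)/\zeta_v(s+w)|$ with the archimedean bound $|J_{t,\infty}(\alpha)|\le C_{N,M,K}(1+t^2)^{-N}|\alpha|^{-M}$ yields
\[
\Big|\prod_v J_{t,v}(\alpha)\Big| \le C'_{K,N}(1+t^2)^{-N}|\alpha|^{-M+\rho}F(\alpha)\,\Big|\frac{\zeta(s+it-1/2)\zeta(s-it-1/2)}{\zeta(s+w)}\Big|,
\]
where $F(\alpha)$ is the multiplicative function with $F(p^k)=C_1 k$ as in Lemma \ref{lemm:estimate}. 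The Dirichlet series $\sum_\alpha F(\alpha)\alpha^{-M+\rho}$ converges whenever $M-\rho>1$, by exactly the Euler-product estimate used in the proof of the cuspidal convergence lemma (Lemma \ref{lemm: cuspidalpart}): the $k=1$ contributions give $\prod_p(1+C_1 p^{-(M-\rho)})$, and the tail $\sum_p\sum_{k\ge 2}k p^{-k(M-\rho)}$ is absorbed by $\sum_p p^{-2(M-\rho-\epsilon)}$ for small $\epsilon$.

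Choosing $M$ sufficiently large (this is allowed since we have the $|\alpha|^{-M}$ factor from the archimedean bound, which in turn comes from iterated integration by parts in $x$ against the unipotent character $\psi_\infty(\alpha x)$, as in Section \ref{subsec: cusp_archimedean}) makes the sum over $\alpha$ converge absolutely. Summing the local contributions and extracting the global $L$-functions produces the bound
\[
|I_{\mathrm{nc}}(\mathbs,t)| \le (1+t^2)^{-N} C_{K,N}\frac{|\zeta(s+it-1/2)\zeta(s-it-1/2)|}{|\zeta(-2it+1)\zeta(s+w)|} \le (1+t^2)^{-N}C_{K,N}\frac{\zeta(\Re(s)-1/2)^2}{|\zeta(-2it+1)\zeta(s+w)|},
\]
where the last inequality uses the standard Dirichlet-series majorisation $|\zeta(\sigma+iy)|\le \zeta(\sigma)$ for $\sigma>1$ (applied with $\sigma=\Re(s)-1/2>1$). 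Absolute convergence in the stated region follows from the same estimate by taking $N=0$.

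The step that requires the most care is the archimedean bound, but that has already been packaged in the preceding lemma (inherited from the Casimir-and-$\partial_x$ iterated-integration argument of Section \ref{subsec: cusp_archimedean}); once those local inputs are in place, the remainder is the book-keeping of summing the Euler product and invoking the Dirichlet-series comparison to rewrite the bound in terms of $\zeta(\Re(s)-1/2)^2$.
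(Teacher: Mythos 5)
Your proposal is correct and takes essentially the same approach as the paper intends: the paper gives no written proof, saying only that ``the discussion of Section~\ref{subsec: cusp_adelic} leads to the following lemma,'' and you have filled in exactly that argument --- factorize $I_{\mathrm{nc}}$ via the Whittaker expansion, reduce the $\alpha$-sum to positive integers, plug in the two preceding local bounds, collect the Euler product with the multiplicative function $F(p^k)=C_1k$, take $M$ large so that $\sum_\alpha F(\alpha)\alpha^{-M}$ converges, and majorize the resulting global $L$-ratio by $\zeta(\Re(s)-1/2)^2/|\zeta(s+w)|$ using $|\zeta(\sigma+iy)|\le\zeta(\sigma)$ for $\sigma>1$. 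One small arithmetic slip: since $\prod_{v<\infty}|\alpha|_v^{\rho}=\alpha^{-\rho}$ for a positive integer $\alpha$, the intermediate exponent should read $\alpha^{-M-\rho}$ rather than $|\alpha|^{-M+\rho}$, but this is harmless because $M$ is then chosen large enough --- exactly as in the proof of Lemma~\ref{lemm: cuspidalpart}.
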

The results of \S 4 and \S 5 of \cite{PGL2} show that 
$$
\int_\bR I_{\mathrm{nc}}(\mathbs, t)\cdot E(it, g) \, \mathrm dt 
$$
is holomorphic in the domain $\Re(s) >3/2$ and $\Re(s+w)>2$.
Note that it follows from Theorem~\ref{theo: zeta} that $|\zeta(1+2it)|^{-1} \ll \log |t|$,
so this does not affect convergence.

\subsection{The constant term}
\label{subsec: constant}
The issue is now understanding 
$$
I_{\mathrm{c}}(\mathbs, t) = \int_{G(\bA)}E(-it, g)_{\mathrm{c}}  \sH(\mathbs, g)^{-1} \, \mathrm dg
$$
$$
= \int_{G(\bA)} \chi_P(g)^{-it+ 1/2} \sH(\mathbs, g)^{-1} \, \mathrm dg + 
\frac{\Lambda(-2it)}{\Lambda(-2it+1)} \int_{G(\bA)} \chi_P(g)^{it + 1/2}\sH(\mathbs, g)^{-1} \, \mathrm dg. 
$$
$$
= \frac{\Lambda(s-it -1/2)\Lambda(w+it -1/2)}{\Lambda(s+ w)} + \frac{\Lambda(-2it)}{\Lambda(-2it+1)}  \frac{\Lambda(s+it -1/2)\Lambda(w-it -1/2)}{\Lambda(s+ w)}. 
$$
The last equality follows from Lemma~\ref{lemm: integral_computation}.
 We then have 
$$
\int_{\bR} I_{\mathrm c}(\mathbs, t)\cdot E(it, g) \, \mathrm dt = 2 \int_\bR \frac{\Lambda(s-it -1/2)\Lambda(w+it -1/2)}{\Lambda(s+ w)} E(it, g) \, \mathrm dt 
$$
after using the functional equation for the Eisenstein series. 
We denote the vertical line defined by $\Re(z) = a$ by $(a)$ for any real number $a \in \bR$.
Assume $\Re s, \Re w \gg 0$.  We then have 
$$
\frac{1}{4\pi} \int_{\bR} I_{\mathrm{c}}(\mathbs, t)\cdot E(it, g) \, \mathrm dt = \frac{2}{4 \pi i} 
\int_{(0)} \frac{\Lambda(s-y -1/2)\Lambda(w+y -1/2)}{\Lambda(s+ w)} E(y, g) \, \mathrm dy 
$$
$$
=  - \frac{\Lambda(s-1)\Lambda(w)}{\Lambda(s+ w)} \, {\rm Res}_{y = 1/2} E(y, g) 
+ \frac{\Lambda(s+w-2)}{\Lambda(s+ w)} E(s-3/2, g) -   \frac{\Lambda(s+w-1)}{\Lambda(s+ w)} E(s-1/2, g)
$$
$$
+ \frac{2}{4 \pi i} \int_{(L)} \frac{\Lambda(s-y -1/2)\Lambda(w+y -1/2)}{\Lambda(s+ w)} E(y, g) \, \mathrm dy
$$
by shifting the contour to $L + i \R$, for an $L > \Re s$, and picking up the residue at $y=1/2, s-1/2$, and  $s-3/2$.  The latter integral converges absolutely for $L \gg 0$. In fact, 
$$
\int_{(L)} \left| \frac{\Lambda(s-y -1/2)\Lambda(w+y -1/2)}{\Lambda(s+ w)} E(y, g) \right| \, \mathrm dy
$$
$$
\ll \frac{|E(L, g)|}{|\Lambda(s+ w)|}. \int_{(L)} \left| \Lambda(s-y -1/2)\Lambda(w+y -1/2) \right| \, \mathrm dy
$$
$$
=\frac{|E(L, g)|}{|\Lambda(s+ w)|}. \int_{(L)} \left| \Lambda(y +1/2-s)\Lambda(w+y -1/2)  \right| \, \mathrm dy
$$
$$
\ll \frac{|E(L, g)|\zeta(L+1/2 - \Re s) \zeta(L - 1/2 + \Re w)}{|\Lambda(s+ w)|}  
\int_{(L)} \left| \Gamma(\frac{y +1/2-s}{2})\Gamma(\frac{w+y -1/2}{2})  \right| \, \mathrm dy. 
$$
Note that $|E(y,g)| \leq E(\Re y, g).$
In order to see the convergence of the last integral we rewrite it as 
$$
 \int_{\R} \left| \Gamma(\frac{L +1/2-\Re s}{2} + i \frac{t -\Im s}{2} )\Gamma(\frac{L + \Re w -1/2}{2} + i \frac{t + \Im w }{2}  )  \right| \, \mathrm d t. 
$$
This last integral is easily seen to be convergent by Stirling's formula. 

\section{Step four: spectral expansion} \label{sect:spectral}
We start by fixing a basis of right $K$-fixed functions for $L^2(G(\bQ) \backslash G(\bA))$. We write 
$$
L^2(G(\bQ) \backslash G(\bA))^K = L_{\mathrm{res}}^K \oplus L_{\mathrm{cusp}}^K \oplus L_{\mathrm{eis}}^K. 
$$
Since $\bQ$ has class number one, $L_{\mathrm{res}}^K$ is the trivial representation.
An orthonormal basis of this space is the constant function 
$$
\phi_{\mathrm{res}}(g) = \frac{1}{\sqrt{\mathrm{vol} \, (G(\bQ)\backslash G(\bA)})}. 
$$
The projection of $\sZ(\mathbs, g)$ onto $L_{\mathrm{res}}^K$ is given by 
$$
\sZ(\mathbs, g)_{\mathrm{res}} = \langle \sZ(\mathbs, \cdot), \phi_{\mathrm{res}} \rangle \phi_{\mathrm{res}}(g) = \frac{1}{\mathrm{vol} \, (G(\bQ)\backslash G(\bA))} \frac{\Lambda(s-1)\Lambda(w)}{\Lambda(s+w)}. 
$$

\

Next, we take an orthonormal basis $\{ \phi_i \}_i$ for $L_{\mathrm{cusp}}^K$. We have 
$$
\sZ(s, g)_{\mathrm{cusp}} = \sum_i \langle \sZ(s, .), \phi_i \rangle \phi_i(g). 
$$
By Lemma~\ref{lemm: cuspidalpart}, $\sZ(s, e)_{\mathrm{cusp}}$ is holomorphic for $\Re s > 3/2+\delta$ and $\Re (s +w)  > 2$.

\

Finally we consider the projection onto the continuous spectrum. We have 
$$
\sZ(s, g)_{\mathrm{eis}} = \frac{1}{4 \pi} \int_{\bR} \langle \sZ(s, \cdot), E(it, \cdot) \rangle E(it, g) \, \mathrm dt.  
$$
The discussion in Section~\ref{subsec: constant} that 
$$
\sZ(s, g)_{\mathrm{eis}} 
=  - \frac{\Lambda(s-1)\Lambda(w)}{\Lambda(s+ w)} \, {\rm Res}_{y = 1/2} E(y, g) +   \frac{\Lambda(s+w-2)}{\Lambda(s+ w)} E(s-3/2, g) 
$$
$$
-  \frac{\Lambda(s+w-1)}{\Lambda(s+ w)} E(s-1/2, g)
 + f(s, w, g)
$$
with $f(s, w, g)$ a function which is holomorphic in the domain $\Re(s) > 3/2$ and $\Re (s+w) > 2$. 

\

We then have 
$$
\sZ(s) = \sZ(s, e)_{\mathrm{res}} + \sZ(s, e)_{\mathrm{cusp}}+ \sZ(s, e)_{\mathrm{eis}}
$$
$$
=   \frac{\Lambda(s-1)\Lambda(w)}{\Lambda(s+ w)} 
\left\{ \frac{1}{\mathrm{vol} \, (G(\bQ)\backslash G(\bA))} - {\rm Res}_{y = 1/2} E(y, e) \right\}
+   \frac{\Lambda(s+w-2)}{\Lambda(s+ w)} E(s-3/2, e) 
$$
$$
-   \frac{\Lambda(s+w-1)}{\Lambda(s+ w)} E(s-1/2, e)
 + \Phi(s, w)
$$
with $\Phi(s, w)$ a function holomorphic for $\Re s > 3/2+\delta$ and $\Re(s+w)> 2$. 

\

Consequently, we have proved the following statement: 

\begin{equation}\label{formula:main}
\sZ(s)
=   
\frac{\Lambda(s+w-2)}{\Lambda(s+ w)} E(s-3/2, e) 
-   \frac{\Lambda(s+w-1)}{\Lambda(s+ w)} E(s-1/2, e)
 + \Phi(s, w)
\end{equation}
with $\Phi(s, w)$ a function holomorphic for $\Re s > 3/2+\delta$ and $\Re(s+w) > 2$. This finishes the proof of Theorem \ref{mainthm3}. 

\subsection{The proof of Theorem~\ref{mainthm2}}

We now proceed to prove Theorem \ref{mainthm2} without the determination of the constant $C$. We restrict the function $\sZ(s, w)$ to the line $s = 2w$, and determine the order of pole and the leading term at $w=1$. We have 
$$
\sZ((2, 1)w) =   
 \frac{\Lambda(3w-2)}{\Lambda(3w)} E(2w-3/2, e) 
-   \frac{\Lambda(3w-1)}{\Lambda(3w)} E(2w-1/2, e)
 + \Phi(2w, w).
$$
The function 
$$
-\frac{\Lambda(3w-1)}{\Lambda(3w)} E(2w-1/2, e) + \Phi(2w , w)
$$
is holomorphic in the domain $\Re w > 3/4+ \delta/2$.  The function  
$$
h(w) :=   \frac{\Lambda(3w-2)}{\Lambda(3w)} E(2w-3/2, e)
$$
has a pole of order $2$ at $w=1$. The coefficient of $(w-1)^{-2}$ in the Taylor expansion of $h(w)$ is given by 
$$
\lim_{w \to 1} (w-1)^2 h(w) = \frac{1}{\Lambda(3)} \lim_{w \to 1} \frac{w-1}{3w-3}.\frac{(w-1) {\rm Res}_{s=1/2} E(s, e)}{2w-2} 
$$
$$
=  \frac{{\rm Res}_{s=1/2} E(s, e)}{6 \Lambda(3)}.
$$
Hence 
$$
\lim_{w \to 1} (w-1)^2 h(w) =  \frac{{\rm Res}_{s=1/2} E(s, e)}{6 \Lambda(3)} = \frac{1}{\zeta(3)}. 
$$
The asymptotic formula, modulo the determination of the constant $C$, now follows from Theorem A.1 of \cite{tauberian}.  A standard computation, as presented in e.g. the proof of Theorem 1 of \cite{abelian}, shows that 
$$
Z((2, 1)w) =  w \left(\frac{1/\zeta(3)}{(w-1)^2} + \frac{C}{w-1} \right) + g(w)
$$
$$
= \frac{1/\zeta(3)}{(w-1)^2} + \frac{C+ 1/\zeta(3)}{w-1} +  C + g(w), 
$$
with $g(w)$ holomorphic for $\Re w > 1-\eta$.  As a result, 
$$
\frac{\Lambda(3w-2)}{\Lambda(3w)} E(2w-3/2, e) = \frac{1/\zeta(3)}{(w-1)^2} + \frac{C+ 1/\zeta(3)}{w-1} +  \tilde{g}(w) 
$$
for $\tilde{g}(w)$ holomorphic in an open half plane containing $w=1$. So, in order to determine $C$ we need to determine the residue of the function appearing on the left hand side of this equation. 

\

A straightforward computation shows 
$$
\frac{\Lambda(3w-2)}{\Lambda(3w)} = \frac{1}{3 \Lambda(3)} \frac{1}{w-1} 
+ \frac{1}{\Lambda(3)} \left(\gamma - \frac{1}{2} \log \pi +  \frac{1}{2} \pi^{-1/2} \Gamma'\left(\frac{1}{2}\right) - \frac{\Lambda'(3)}{\Lambda(3)}\right) 
+ (w-1) \vartheta_1(w) 
$$
with $\vartheta_1(w)$ holomorphic in a neighborhood of $w=1$.  Here $\gamma$ is the Euler constant. 

\

We then recall the Kronecker Limit Formula, Theorem 10.4.6 of \cite{Cohen} and also Chapter 1 of \cite{Siegel}, in the following form. We have 
$$
E(s, g) = \frac{\pi}{2 \zeta(2s+ 1)} \left(\frac{1}{s-1/2} + C(g) + (s-1/2) \vartheta_2 (s)  \right)
$$
with $\vartheta_2(s)$ a function holomorphic in a neighborhood of $s=1/2$.  
The function $C(g)$ is described as follows. Since $\bQ$ has class number $1$, we have 
$$
\mathrm{GL}_2(\bA) = \mathrm{GL}_2(\bQ) \mathrm{GL}_2(\bR)^+ \mathrm{GL}_2(\hat\bZ). 
$$
Given $g \in \PGL_2(\bA)$, we choose a representative $g' \in \mathrm{GL}_2(\bA)$, 
we write $g' = \gamma g_\infty k$ with $\gamma \in \mathrm{GL}_2(\bQ), g_\infty \in \mathrm{GL}_2(\bR)^+, k \in \mathrm{GL}_2(\hat\bZ)$. 
We then set $\tau(g) = g_\infty\cdot i$ where the action is given by 
$$
\begin{pmatrix} \alpha & \beta \\ \gamma & \delta \end{pmatrix}\cdot z = \frac{\alpha z + \beta}{\gamma z + \delta}. 
$$ 
Note that since $g_\infty \in GL_2(\bR)^+$, $\Im (g_\infty\cdot i) >0$. We let $y(g) = \Im \tau(g)$. We then have 
$$
C(g) = 2 \gamma - 2 \log 2 - \log y(g) - 4 \log |\eta(\tau(g))|. 
$$
Multiplying out, we get 
$$
E(s, g) = \frac{3}{\pi} \frac{1}{s - 1/2} + \left(\frac{3}{\pi} C(g) - \frac{36 \zeta'(2)}{\pi^3} \right)+ (s-1/2) \vartheta_3(s)
$$
with $\vartheta_3(s)$ a function holomorphic in a neighborhood of $s=1/2$.  Hence, 
$$
E(2w - 3/2) = \frac{3}{2 \pi (w-1)} + \left(\frac{3}{\pi} C(e) - \frac{36 \zeta'(2)}{\pi^3} \right)+ (w-1) \vartheta_4(s)
$$
with $\vartheta_4(w)$ holomorphic in a neighborhood of $w=1$.  We also have 
$$
C(e) = 2 \gamma - 2 \log 2 - 4 \log | \eta(i)|. 
$$
Finally, 
$$
\frac{\Lambda(3w-2)}{\Lambda(3w)} E(2w-3/2, e) = 
$$
$$
\left\{\frac{1}{3 \Lambda(3)} \frac{1}{w-1} + \frac{1}{\Lambda(3)} 
\left(\gamma - \frac{1}{2} \log \pi +  \frac{1}{2} \pi^{-1/2} \Gamma'\left(\frac{1}{2}\right) - \frac{\Lambda'(3)}{\Lambda(3)}\right) + (w-1) \vartheta_1(w)\right\} 
$$
$$
\times \left\{\frac{3}{2 \pi (w-1)} + \left(\frac{3}{\pi} C(e) - \frac{36 \zeta'(2)}{\pi^3} \right)+ (w-1) \vartheta_4(s)\right\}
$$
$$
= \frac{1}{\zeta(3)} \frac{1}{(w-1)^2} + \left(\frac{3}{\pi} C(e) - \frac{36 \zeta'(2)}{\pi^3} \right)\frac{1}{3 \Lambda(3)} \frac{1}{w-1}
$$
$$
+ \frac{1}{\Lambda(3)} \left(\gamma - \frac{1}{2} \log \pi +  \frac{1}{2} \pi^{-1/2} \Gamma'\left(\frac{1}{2}\right) - \frac{\Lambda'(3)}{\Lambda(3)}\right)\frac{3}{2 \pi (w-1)}
 + \vartheta_5(w) 
$$
with $\vartheta_5(w)$ holomorphic in a neighborhood of $w=1$. Simplifying 
$$
\frac{\Lambda(3w-2)}{\Lambda(3w)} E(2w-3/2, e) = \frac{1}{\zeta(3)} \frac{1}{(w-1)^2} + \frac{A}{w-1} + \vartheta_5(w) 
$$
with 
$$
\zeta(3) A= 5 \gamma - 4 \log 2 - \frac{1}{2} \log \pi - \log |\eta(i)| + \frac{1}{2} \pi^{-1/2} \Gamma'(1/2) - \frac{24}{\pi^2} \zeta'(2) - \frac{\Lambda'(3)}{\Lambda(3)}. 
$$
Elementary computations show 
$$
\frac{1}{2\sqrt{\pi}} \Gamma'\left(\frac{1}{2}\right) = - \frac{1}{2} \gamma - \log 2 
$$
$$
\frac{\Lambda'(3)}{\Lambda(3)}  = -\frac{1}{2} \log \pi - \frac{1}{2} \gamma - \log 2 + 2 + \frac{\zeta'(3)}{\zeta (3)}. 
$$
Also we have the well-known identity 
$$
\eta(i) = \frac{\Gamma(1/4)}{2 \pi^{3/4}}.
$$
Putting everything together we conclude the following lemma:
\begin{lemm}
We have
$$
\frac{\Lambda(3w-2)}{\Lambda(3w)} E(2w-3/2, e) = \frac{1}{\zeta(3)} \frac{1}{(w-1)^2} + \frac{A}{w-1} + \vartheta_5(w) 
$$
where $\vartheta_5(w)$ is a holomorphic function in the domain $\Re(w) >1 - \eta$ for some $\eta >0$ and the constant $A$ is given by
$$
\zeta(3) A = 5 \gamma - 3 \log 2 + \frac{3}{4} \log \pi - \log \Gamma\left(\frac{1}{4}\right) - \frac{24}{\pi^2} \zeta'(2) - \frac{\zeta'(3)}{\zeta(3)} - 3. 
$$
\end{lemm}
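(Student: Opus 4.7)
The plan is to Laurent expand the two factors of the product $\frac{\Lambda(3w-2)}{\Lambda(3w)}E(2w-3/2,e)$ separately near $w=1$, multiply the expansions, and read off the coefficients of $(w-1)^{-2}$ and $(w-1)^{-1}$. The factor $\Lambda(3w-2)$ contributes a simple pole at $w=1$ inherited from the unique pole of $\Lambda(s)$ at $s=1$, while $\Lambda(3w)$ is regular and nonzero at $w=1$; meanwhile $E(2w-3/2,e)$ inherits its simple pole at $w=1$ from the pole of $E(s,e)$ at $s=1/2$. Thus the product has a pole of order exactly two, matching the structure of the stated Laurent expansion, and the holomorphicity of $\vartheta_5(w)$ in a half-plane $\Re w > 1-\eta$ follows from the meromorphic continuations of $\Lambda$ and $E$ together with the fact that these are the only singularities of the factors in a neighborhood of $w=1$.

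Concretely, I would first expand $\Lambda(s) = \frac{1}{s-1} + \left(\frac{\gamma}{2} - \log 2 - \frac{\log\pi}{2}\right) + O(s-1)$ by combining the Laurent expansion of $\zeta(s)$ at $s=1$ with the Taylor expansion of $\pi^{-s/2}\Gamma(s/2)$ there, and then substitute $s = 3w-2$. Combining this with the first-order Taylor expansion $\Lambda(3w)^{-1} = \Lambda(3)^{-1}\bigl(1 - 3(\Lambda'(3)/\Lambda(3))(w-1)\bigr) + O((w-1)^2)$ yields the singular and constant parts of $\Lambda(3w-2)/\Lambda(3w)$. Separately, the Kronecker Limit Formula recalled in the proof sketch gives $E(s,g) = \frac{3/\pi}{s-1/2} + \bigl(\frac{3C(g)}{\pi} - \frac{36\zeta'(2)}{\pi^3}\bigr) + O(s-1/2)$ with $C(e) = 2\gamma - 2\log 2 - 4\log|\eta(i)|$ (using $\tau(e) = i$ and $y(e) = 1$); substituting $s = 2w - 3/2$ converts this into the desired Laurent expansion near $w=1$, with $s-1/2$ replaced by $2(w-1)$.

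The coefficient of $(w-1)^{-2}$ in the product is then $\frac{1}{3\Lambda(3)}\cdot\frac{3}{2\pi} = \frac{1}{2\pi\Lambda(3)}$, which equals $1/\zeta(3)$ via the identity $\Lambda(3) = \pi^{-3/2}\Gamma(3/2)\zeta(3) = \zeta(3)/(2\pi)$. The coefficient of $(w-1)^{-1}$ is the sum of two cross-terms, each pairing the singular part of one factor with the constant part of the other. I would then simplify using the standard identities $\tfrac{1}{2\sqrt{\pi}}\Gamma'(\tfrac{1}{2}) = -\tfrac{\gamma}{2} - \log 2$, $\frac{\Lambda'(3)}{\Lambda(3)} = -\tfrac{\log\pi}{2} + \tfrac{1}{2}\psi(\tfrac{3}{2}) + \frac{\zeta'(3)}{\zeta(3)}$ with $\psi(\tfrac{3}{2}) = -\gamma - 2\log 2 + 2$, and $\eta(i) = \Gamma(\tfrac{1}{4})/(2\pi^{3/4})$ to eliminate $\log|\eta(i)|$ in favor of $\log\Gamma(\tfrac{1}{4})$. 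Collecting like terms produces the stated closed form for $\zeta(3)A$.

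The main obstacle is purely arithmetic bookkeeping: assembling the eight or so contributions from $\gamma$, $\log 2$, $\log\pi$, $\log\Gamma(\tfrac{1}{4})$, $\zeta'(2)/\pi^2$, $\zeta'(3)/\zeta(3)$, and numerical constants coming from the two cross-terms, and verifying that the transcendental pieces collapse correctly after applying the digamma and eta identities. No new analytic input is required beyond the Kronecker Limit Formula and the elementary special-function identities listed above.
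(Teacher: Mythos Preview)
Your proposal is correct and follows essentially the same route as the paper: Laurent expand $\Lambda(3w-2)/\Lambda(3w)$ and $E(2w-3/2,e)$ separately at $w=1$ (the latter via the Kronecker Limit Formula with $C(e)=2\gamma-2\log 2-4\log|\eta(i)|$), multiply, read off the $(w-1)^{-2}$ and $(w-1)^{-1}$ coefficients, and simplify using the identities for $\tfrac{1}{2\sqrt\pi}\Gamma'(\tfrac12)$, $\Lambda'(3)/\Lambda(3)$, and $\eta(i)=\Gamma(\tfrac14)/(2\pi^{3/4})$. The only cosmetic difference is that you package the constant term of $\Lambda(3w-2)/\Lambda(3w)$ directly as $\tfrac{\gamma}{2}-\log 2-\tfrac{\log\pi}{2}-\Lambda'(3)/\Lambda(3)$ and express $\Lambda'(3)/\Lambda(3)$ through $\psi(\tfrac32)$, whereas the paper leaves $\tfrac{1}{2}\pi^{-1/2}\Gamma'(\tfrac12)$ unexpanded until the end; the two presentations are equivalent.
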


\section{Manin's conjecture with Peyre's constant}\label{sect:peyre} 

\subsection{The anticanonical class}

In this section, we verify that the anticanonical class satisfies Manin's conjecture with Peyre's constant, 
hence finishing the proof of Theorem \ref{mainthm} for the anticanonical class. For the definition of Peyre's constant, see \cite{peyre95}. 
The N\'eron-Severi lattice $\mathrm{NS}(X)$ is generated by $E$ and $H$ where $H$ is the pullback of the hyperplane class on $\bP^3$. 
Let $N_1(X)$ is the dual lattice of $\mathrm{NS}(X)$ which is generated by the dual basis $E^*$ and $H^*$. 
The cone of effective divisors $\Lambda_{\mathrm{eff}}(X) \subset \mathrm{NS}(X)_{\bR}$ is generated by $E$ and $H-E$. 
We denote the dual cone of the cone of effective divisors by $\mathrm{Nef}_1(X)$.  Any element of this dual cone is called a nef class. 
Let $\mathrm da$ be the normalized haar measure on $N_1(X)_{\bR}$ such that $\mathrm{vol}(N_1(X)_{\bR} / N_1(X)) = 1$. 
The alpha invariant (see \cite[Definition 4.12.2]{Tsc09}) is given by
$$ 
\alpha(X) = \int_{\mathrm{Nef}(X)} e^{-\langle-K_X, a\rangle} \, \mathrm d a
= \int_{\{x \geq 0, y-x \geq 0\}}e^{-(4y-x)} \, \mathrm dx \, \mathrm dy = \frac{1}{12}.
$$

Next we compute the Tamagawa number.
Let $\omega \in \Gamma(\PGL_2, \Omega^3_{\PGL_2/\mathrm{Spec}(\bZ)})$ be a nonzero relative top degree invariant form over $\mathrm{Spec}(\bZ)$. This is unique up to sign.
Our height induces a natural metrization on $\mathcal O(K_X)$, and it follows from the construction that
$$
\mathsf H_v(g_v, s=2, w=1)^{-1} = \| \omega \|_v.
$$
The normalized haar measure $\mathrm dg_v$ is given by $|\omega|_v/a_v$ at non-archimedean places, hence we have
\begin{align*}
\tau_{X, v}(X(F_v)) &= \int_{X(F_v)} \,\mathrm d\tau_{X, v} 
=  \int_{X(F_v)} \,\mathrm d\frac{|\omega|_v}{\| \omega \|_v} \\&= a_v \int_{G(F_v)} \sH_v(g_v, s=2, w=1) ^{-1} \, \mathrm dg_v =
(1-q_v^{-2}) \frac{1-q_v^{-3}}{(1-q_v^{-1}) (1-q_v^{-1})}.
\end{align*}
For the infinite place $v = \infty$, we have
$$
\mathsf H_\infty(g_\infty, s=2, w=1)^{-1} = \| \omega \|_\infty, \quad \mathrm dg_\infty = \frac{|\omega|_\infty}{\pi}.
$$
Hence we conclude that
$$
\tau_{X, \infty}(X(\bR)) = \pi \int_{X(\bR)} \sH^{-1} (g, s = 2, w = 1) \, \mathrm dg 
= \pi \cdot \sqrt{\pi} \frac{\Gamma(1/2)^2}{\Gamma(3/2)} = 2\pi^2.
$$
For any non-archimedean place $v$, the local $L$-function at $v$ is given by
$$
L_v(s, \mathrm{Pic}(\mathcal X_{\bar{k}_v})_{\bQ}) := \det(1-q_v^{-s} \mathrm{Fr}_v \mid \mathrm{Pic}(\mathcal X_{\bar{k}_v})_{\bQ} )^{-1} = (1-q_v^{-s})^{-2}.
$$
We define the global $L$-function by
$$
L(s, \mathrm{Pic} (X_{\bar{\bQ}}) ):= \prod_{ v\in \mathrm{Val}(\bQ)_{\mathrm{fin}}} L_v (s, \mathrm{Pic}(\mathcal X_{\bar{k}_v})_{\bQ} ) = \zeta_F(s)^2.
$$

Let $\lambda_v = 1/L_v(1, \mathrm{Pic}(\mathcal X_{\bar{k}_v})_{\bQ}) = (1-q_v^{-1})^2$. Then we have
$$
\tau(-\mathcal K_X) = \zeta_{F *}(1)^2 \prod_v \lambda_v \tau_{X, v}(X(F_v)) = 2\pi^2 \prod_p (1-p^{-2})(1-p^{-3}) = 2\pi^2 \frac{1}{\zeta(2) \zeta(3)} = \frac{12}{\ \zeta(3)}.
$$
Thus the leading constant is
$$
c(-\mathcal K_X) = \alpha (X) \tau(-\mathcal K_X)= \frac{1}{\zeta(3)}.
$$

\subsection{Other big line bundles: the rigid case}
Consider the following $\bQ$-divisor:
\[
 L = x\tilde{D}+yE,
\]
where $x,y$ are rational numbers.
The divisor $L$ is big if and only if $x > 0$ and $x+y >0$.
We define the following invariant:
\begin{align*}
 & a(L) = \inf \{ t \in \bR \mid tL + K_X \in \Lambda_{\mathrm{eff}}(X) \}, \\
 & b(L) = \text{the codimension of the minimal face containing $a(L)[L]+[K_X]$ of $\Lambda_{\mathrm{eff}}(X)$}.
\end{align*}
It follows from Theorem~\ref{mainthm3} that the height zeta function $\mathsf Z(sL)$ has a pole at $s = a(L)$ of order $b(L)$.
Thus, to verify Manin's conjecture, the only issue is the leading constant i.e., the residue of $\mathsf Z(sL)$.
Tamagawa numbers for general big line bundles are introduced by Batyrev and Tschinkel in \cite{bt-tamagawa}.
(See \cite[Section 4.14]{Tsc09} as well.)
The definition is quite different depending on whether the adjoint divisor $a(L)L+K_X$ is rigid or not.
Here we assume that $a(L)L+K_X$ is a non-zero rigid effective $\bQ$-divisor.

In this case, the adjoint divisor $a(L)L+K_X$ is proportional to $E$.
This happens if and only if $2y-x >0$.
In this situation, we have $a(L)= 2/x$ and $\sZ(sL)$ has a pole at $s = a(L)$ of order one.
By Theorem~\ref{mainthm3}, we have
\[
 \lim_{s \rightarrow a(L)}(s-a(L))\sZ(sL) = \frac{\Lambda(2y/x)}{\Lambda(2+ 2y/x)}\frac{3}{\pi x}.
\]
Recall that 
\[
 \frac{\Lambda(s-1)\Lambda(w)}{\Lambda(s+w)} = \int_{G(\bA)} \sH(g, s, w)^{-1} \, \mathrm dg
 = \frac{\pi}{6}\int_{G(\bA)} \sH(g, s-2, w-1)^{-1} \, \mathrm d\tau_G,
\]
where $\tau_G$ is the Tamagawa measure on $G(\bA)$ defined by $\tau_G = \prod_v \frac{|\omega|_v}{\|\omega\|_v}$.
We denote this function by $\widehat{\sH}(s,w)$.
It follows from the computation of \cite[Section 4.4]{volume} that 
\[
 \lim_{s \rightarrow a(L)} (s-a(L))\widehat{\sH}(sL) = \frac{1}{x}\frac{\Lambda(2y/x)}{\Lambda(2 + 2y/x)}
 = \frac{\pi}{6x} \int_{X^\circ(\bA)}\sH(x, a(L)L + K_X)^{-1} \, \mathrm d \tau_{X^\circ},
\]
where $X^\circ = X \setminus E$, and $\tau_{X^\circ}$ is the Tamagawa measure on $X^\circ$.
Thus the Tamagawa number $\tau(X, \mathcal L)$ in the sense of \cite{bt-tamagawa} is given by
\[
 \tau(X, \mathcal L) = \int_{X^\circ(\bA)}\sH(x, a(L)L + K_X)^{-1} \, \mathrm d \tau_{X^\circ} = \frac{6}{\pi}\frac{\Lambda(2y/x)}{\Lambda(2 + 2y/x)}.
\]
On the other hand, the alpha invariant is given by
\[
 \alpha(X, L) = \frac{1}{2x}.
\]
Thus we conclude
\[
 c(X, \mathcal L) = \frac{\Lambda(2y/x)}{\Lambda(2+ 2y/x)}\frac{3}{\pi x}.
\]

\subsection{The non-rigid case}
Again, we consider a big $\bQ$-divisor $L = x\tilde{D} + yE$,
and assume that $a(L)L+K_X$ is not rigid, i.e., some multiple of $a(L)L+K_X$ defines the Iitaka fibration.
This happens exactly when $2y - x<0$ and $a(L)$ is given by $3/(x+y)$.
The adjoint divisor $a(L)L+K_X$ is proportional to $\tilde{D}-E$ which is semiample
and defines a morphism $f : X \rightarrow P\backslash G = \bP^1$.

It follows from Theorem~\ref{mainthm3} that
\[
 \lim_{s \rightarrow a(L)}(s - a(L))\sZ(sL) = \frac{1}{(x+y)\Lambda(3)}E\left(\frac{3x}{x+y} - \frac{3}{2}, e\right).
\]
Again it follows from the computation of \cite[Section 4.4]{volume} that
\[
 \lim_{s \rightarrow 1}(s-1)^2 \widehat{\sH}(-sK_X) = \frac{1}{2\Lambda(3)} = \frac{\pi}{12}\int_{X(\bA)} \, \mathrm d\tau_X,
\]
where $\tau_X$ is the Tamagawa measure on $X$.

On the other hand, it follows from the definition of the Tamagawa measure that
\begin{align*}
 \int_{X(\bA)} \, \mathrm d\tau_X 
 &= \prod_{v : \mathrm{fin}} (1-p_v^{-1})^2 \int_{X(\bQ_v)} \mathrm d \tau_{X, v} \cdot \int_{X(\bR)} \mathrm d \tau_{X, \infty} \\
 &= \frac{6}{\pi}\prod_{v : \mathrm{fin}} (1-p_v^{-1})^2 \int_{G(\bQ_v)} \sH(g_v, s=2, w=1)^{-1}\, \mathrm d g_v \cdot 
 \int_{G(\bR)} \sH(g_\infty, s=2, w=1)^{-1} \, \mathrm d g_\infty \\
 &= \frac{6}{\pi}\prod_{v : \mathrm{fin}} (1-p_v^{-1})^2 \int_{P(\bQ_v)} \sH(h_v, s=2, w=1)^{-1}\, \mathrm d_l h_v \cdot 
 \int_{P(\bR)} \sH(h_\infty, s=2, w=1)^{-1} \, \mathrm d_l h_\infty,
\end{align*}
where $P$ is the standard Borel subgroup and $\mathrm d_l h_v$ is the left invariant haar measure on $P(\bQ_v)$ given by 
$\mathrm d_l h_v = |a|_v^{-1} \mathrm d a_v^\times \mathrm dx_v$.
Let $\sigma$ be a top degree left invariant form on $P$. We have
\[
 \mathrm{div} (\sigma) = -2\tilde{D}|_S -  E|_S,
\]
where $S$ is the Zariski closure of $P$ in $X$. Hence
\begin{align*}
 \int_{X(\bA)} \, \mathrm d\tau_X 
 &= \frac{6}{\pi}\prod_{v : \mathrm{fin}} (1-p_v^{-1}) \int_{P(\bQ_v)} \sH(h_v, s=2, w=1)^{-1}\, \mathrm d |\sigma|_v \cdot 
 \int_{P(\bR)} \sH(h_\infty, s=2, w=1)^{-1} \, \mathrm d |\sigma|_\infty \\
 &= \frac{6}{\pi}\prod_{v : \mathrm{fin}} (1-p_v^{-1}) \int_{S(\bQ_v)} \mathrm d \tau_{S, v} \cdot 
 \int_{S(\bR)}  \mathrm d \tau_{S, \infty} \\
 &= \frac{6}{\pi}\int_{S(\bA)} \mathrm d \tau_S.
\end{align*}

Recall that we have the following height function on $\bP^1$:
\[
 h_{\bP^1}(c:d)= \prod_{v : \mathrm{fin}} \max \{ |c|_v, |d|_v \} \cdot \sqrt{c^2+d^2} : \bP^1(\bQ) \rightarrow \bR_{>0},
\]
and our Eisenstein series is related to this height function by
\[
 E(s, e) = \sZ_{\bP^{1}}(2s+1) = \sum_{z \in \bP^1(\bQ)} h_{\bP^1}(z)^{-(2s+1)}.
\]
For each $z = (c:d) \in \bP^1$ where $c, d$ are coprime integers, choose integers $a, b$ so that $ad-bc = 1$
and let $g_z= \begin{pmatrix}a & b \\ c&d\end{pmatrix}$.
Then the fiber $f^{-1}(z) = S_z$ is the translation $S\cdot g_z$.
The alpha invariant is given by
\[
 \alpha(S_z, L) = \frac{1}{x+y}.
\]
Next the Tamagawa number is given by
\begin{align*}
 \int_{S_z(\bA)} \, \mathrm d\tau_{S_z} 
 &= \prod_{v : \mathrm{fin}} (1-p_v^{-1}) \int_{S_z(\bQ_v)} \mathrm d \tau_{S_z, v} \cdot 
 \int_{S_z(\bR)}  \mathrm d \tau_{S_z, \infty} \\
 &=\prod_{v : \mathrm{fin}} (1-p_v^{-1}) \int_{P(\bQ_v)} \sH(h_v g_z, s=2, w=1)^{-1}\, \mathrm d |\sigma|_v \cdot 
 \int_{P(\bR)} \sH(h_\infty g_z, s=2, w=1)^{-1} \, \mathrm d |\sigma|_\infty \\
 &= \prod_{v : \mathrm{fin}} (1-p_v^{-1}) \int_{P(\bQ_v)} \sH(h_v, s=2, w=1)^{-1}\, \mathrm d |\sigma|_v \cdot 
 \int_{P(\bR)} \sH(h_\infty g_z, s=2, w=1)^{-1} \, \mathrm d |\sigma|_\infty.
\end{align*}
Note that $g_z \in G(\bZ)$. Then we have
\begin{align*}
 \int_{P(\bR)} \sH(h_\infty g_z, s=2, w=1)^{-1} \, \mathrm d |\sigma|_\infty
 &= \int_{\bR} \int_{\bR^\times} \sH\left(\begin{pmatrix} t& x \\ & 1\end{pmatrix}\begin{pmatrix} a& b \\ c& d\end{pmatrix}, s=2,w=1\right)^{-1}
 |t|^{-1} \mathrm dt^\times \mathrm dx \\
 &= \int_{\bR} \int_{\bR^\times} \sH\left(\begin{pmatrix} t& x \\ & 1\end{pmatrix}\begin{pmatrix} \frac{1}{c^2+d^2}& \frac{ac+bd}{c^2+d^2} \\ & 1\end{pmatrix}, s=2,w=1\right)^{-1}
 |t|^{-1} \mathrm dt^\times \mathrm dx \\
 &= (c^2+d^2)^{-1}\int_{\bR} \int_{\bR^\times} \sH\left(\begin{pmatrix} t& x \\ & 1\end{pmatrix}, s=2,w=1\right)^{-1}
 |t|^{-1} \mathrm dt^\times \mathrm dx \\
 &= h_{\bP^1}(z)^{-2} \int_{S(\bR)} \mathrm d \tau_{S, \infty}.
\end{align*}
Thus we have
\[
 \int_{S_z(\bA)} \, \mathrm d\tau_{S_z} = h_{\bP^1}(z)^{-2} \int_{S(\bA)} \mathrm d \tau_{S}.
\]
Let $H$ be the pull back of the ample generator via $f : X \rightarrow \bP^1$.
Then we have $a(L)L+K_X \sim 2\frac{x-2y}{x+y}H$.
We conclude that
\begin{align*}
 c(X, \mathcal L) &= \sum_{z \in \bP^1(\bQ)} \alpha(S_z, L) \sH(g_z, a(L)L+K_X)^{-1} \tau(S_z, \mathcal L) \\
 &= \frac{1}{x+y} \sum_{z \in \bP^1(\bQ)} h_{\bP^1}(z)^{-2\frac{x-2y}{x+y}} h_{\bP^1}(z)^{-2} \tau(S, \mathcal L)\\
 &= \frac{1}{x+y} \frac{1}{\Lambda(3)}  \sum_{z \in \bP^1(\bQ)}h_{\bP^1}(z)^{-\frac{4x-2y}{x+y}}\\
 &= \frac{1}{(x+y)\Lambda(3)} E\left(\frac{3x}{x+y}-\frac{3}{2}, e\right).
\end{align*}
We have verified Manin's conjecture in this case.

\section{Appendix: Special functions}
\label{sec: appendix}

Here we collect some important facts about special functions which we use in our analysis of the height zeta function. First we state the Stirling formula for the Gamma function:
\begin{theo}{\cite[p.220, B.8]{Iwa95}}
\label{theo: stirling}
Let $\sigma$ be a real number such that $|\sigma| \leq 2$. Then we have
\[
\Gamma(\sigma + it) = (2\pi)^{\frac{1}{2}} t^{\sigma - \frac{1}{2}} e^{-\frac{\pi t}{2}}
\left(\frac{t}{e}\right)^{it}(1+O(t^{-1}))
\]
for $t >0$.
\end{theo}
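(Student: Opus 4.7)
The plan is to reduce the asymptotic to the standard complex Stirling expansion
\[
\log \Gamma(z) = \bigl(z - \tfrac{1}{2}\bigr)\log z - z + \tfrac{1}{2}\log(2\pi) + O(|z|^{-1}),
\]
valid as $|z| \to \infty$ uniformly in any fixed sector $|\arg z| \leq \pi - \delta$. First I would derive this expansion from Binet's second integral representation,
\[
\log \Gamma(z) = \bigl(z - \tfrac{1}{2}\bigr)\log z - z + \tfrac{1}{2}\log(2\pi) + 2\int_0^\infty \frac{\arctan(u/z)}{e^{2\pi u} - 1} \, du,
\]
bounding the remainder by $O(|z|^{-1})$ via $|\arctan(u/z)| \leq |u|/|z|$ together with the rapid decay of $(e^{2\pi u}-1)^{-1}$; alternatively one may appeal to the Weierstrass product plus Euler--Maclaurin summation.

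Next I would specialize to $z = \sigma + it$ with $t \geq 1$ and $|\sigma| \leq 2$, which lies inside a fixed sector of the required form. Factoring $\sigma + it = it(1 - i\sigma/t)$ and expanding the principal branch gives
\[
\log(\sigma + it) = \log t + i\pi/2 - i\sigma/t + O(t^{-2}).
\]
Multiplying by $z - 1/2 = \sigma - 1/2 + it$ and collecting real and imaginary parts yields
\[
\bigl(z - \tfrac{1}{2}\bigr)\log z - z = (\sigma - \tfrac{1}{2})\log t + it(\log t - 1) + \tfrac{i\pi(\sigma - 1/2)}{2} - \tfrac{\pi t}{2} + O(t^{-1}),
\]
where the key cancellation $(it)\cdot(-i\sigma/t) = \sigma$ combines with $-z = -\sigma - it$ to produce the clean $-\pi t/2$ real part and the pure imaginary $it(\log t - 1)$. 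Adding $\tfrac{1}{2}\log(2\pi)$ and exponentiating, noting $e^{it(\log t - 1)} = (t/e)^{it}$, gives
\[
\Gamma(\sigma + it) = \sqrt{2\pi}\, t^{\sigma - 1/2} e^{-\pi t/2} (t/e)^{it} \cdot e^{i\pi(\sigma - 1/2)/2} \bigl(1 + O(t^{-1})\bigr),
\]
which is the asserted asymptotic (the unimodular phase $e^{i\pi(\sigma - 1/2)/2}$ is harmless and may be folded into the multiplicative factor or tracked separately; for the applications to $\|W_{\pi_\infty}\|_\infty$ in Section~\ref{subsec: cusp_archimedean}, only the modulus matters).

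The only genuine obstacle is careful bookkeeping: one must expand $\log(\sigma + it)$ to order $t^{-2}$ before multiplying by $it$, since it is precisely the cross term $(it)\cdot(-i\sigma/t) = \sigma$ that cancels the $-\sigma$ coming from $-z$ in the Stirling formula. Without that cancellation the $(t/e)^{it}$ factor fails to emerge and the remainder degrades from $O(t^{-1})$ to $O(1)$; everything else is a routine Taylor expansion.
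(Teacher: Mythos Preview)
The paper does not prove this statement at all: it is quoted verbatim from Iwaniec \cite[p.~220, B.8]{Iwa95} as a reference in the appendix, with no argument given. So there is no ``paper's own proof'' to compare against.

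Your derivation is the standard one and is correct. Starting from the complex Stirling expansion (via Binet's second formula or otherwise), specializing to $z=\sigma+it$, expanding $\log(\sigma+it)=\log t+i\pi/2-i\sigma/t+O(t^{-2})$, and tracking the cross term $(it)(-i\sigma/t)=\sigma$ against the $-\sigma$ in $-z$ is exactly how this is done. Your observation about the unimodular phase $e^{i\pi(\sigma-1/2)/2}$ is also accurate: the formula as stated (here and in Iwaniec) suppresses this factor, so as an identity of complex numbers it is only literally correct at $\sigma=1/2$; for general $|\sigma|\le 2$ it holds in modulus, which is all that is used in the paper's estimates on $\|W_{\pi_\infty}\|_\infty$. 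You handled this point appropriately.

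One small remark: the stated range is $t>0$, but the $O(t^{-1})$ error from Binet and from the Taylor expansion of $\log(1-i\sigma/t)$ is uniform only for $t$ bounded away from $0$ (say $t\ge 1$). This is harmless in practice since the applications are all for $|t|\to\infty$, but strictly speaking your argument establishes the asymptotic for $t\ge c>0$ with implied constant depending on $c$, not for all $t>0$.
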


Next we list some estimates for the modified Bessel function of the second kind:
\begin{prop}{\cite[Proposition 3.5]{har03}}
For any real number $\sigma >0$ and $\epsilon>0$, the following uniform estimate holds in the vertical stripe defined by $|\Re(\mu)| \leq \sigma$:
\[
e^{\pi|\Im(\mu)|/2} K_\mu(x) \ll
\begin{cases}
(1 + |\Im(\mu)|)^{\sigma + \epsilon} x^{-\sigma - \epsilon}, & 0 < x \leq 1 + \pi |\Im(\mu)|/2; \\
e^{-x +\pi|\Im(\mu)|/2 } x^{-1/2}, & 1 + \pi |\Im(\mu)|/2< x.
\end{cases}
\]
Here the implied constant depends on $\sigma$ and $\epsilon$.
\end{prop}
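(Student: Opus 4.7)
The plan is to derive the two regimes of the estimate from two different integral representations of $K_\mu$, each chosen to make the dependence on $\mu = a+ib$ (with $|a|\le\sigma$) explicit.

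The regime $x > 1 + \pi|b|/2$ is the easier one: I would start from the Sommerfeld representation
\[
K_\mu(x) = \int_0^\infty e^{-x\cosh t}\cosh(\mu t)\, dt,
\]
valid for $\Re(x)>0$ and all complex $\mu$. A direct expansion gives $|\cosh(\mu t)|^2 = \cosh^2(at) - \sin^2(bt) \le \cosh^2(\sigma t)$, hence $|K_\mu(x)| \le K_\sigma(x)$. Standard Laplace analysis of $K_\sigma(x)$ near the saddle at $t=0$ then gives $K_\sigma(x) \ll_\sigma x^{-1/2}e^{-x}$ uniformly for $x\ge 1$; multiplying by $e^{\pi|b|/2}$ reproduces exactly the second case of the proposition.

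The regime $0 < x \le 1 + \pi|b|/2$ is handled via the Mellin--Barnes representation
\[
K_\mu(x) = \frac{1}{8\pi i}\int_{(c)}\Gamma\!\left(\tfrac{s+\mu}{2}\right)\Gamma\!\left(\tfrac{s-\mu}{2}\right) 2^s x^{-s}\, ds, \qquad c > |a|,
\]
taken with $c = \sigma+\epsilon$. Applying Theorem~\ref{theo: stirling} to both Gamma factors on the line $\Re s = c$, the exponential factor at $s=c+i\tau$ equals
\[
\exp\!\left(-\tfrac{\pi}{4}\bigl(|\tau+b|+|\tau-b|\bigr)\right) = \exp\!\left(-\tfrac{\pi}{2}\max(|\tau|,|b|)\right),
\]
while the polynomial factor is bounded by a product $(1+|\tau+b|)^{\alpha_1}(1+|\tau-b|)^{\alpha_2}$ with $\alpha_1+\alpha_2 = \sigma+\epsilon-1$. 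Splitting the $\tau$-integral at $|\tau|=|b|$, the contribution from $|\tau|\le|b|$ is $\ll(1+|b|)^{\sigma+\epsilon}e^{-\pi|b|/2}$ (the exponential is constant on this range, the polynomials are bounded by $(1+2|b|)^{\sigma+\epsilon-1}$, and the $\tau$-integration gives one extra power of $|b|$), while the contribution from $|\tau|>|b|$ is $\ll e^{-\pi|b|/2}$ by the Gaussian tail. Combined with the factor $x^{-c}$ coming from $x^{-s}$ on the contour, this yields $|K_\mu(x)| \ll (1+|b|)^{\sigma+\epsilon}x^{-\sigma-\epsilon}e^{-\pi|b|/2}$, matching the first case.

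The main technical obstacle will be making Stirling's estimate fully uniform across the varying regimes of $((\tau+b)/2,(\tau-b)/2)$: when $|\tau|$ is close to $|b|$, one of the two Gamma arguments has imaginary part close to zero, where Theorem~\ref{theo: stirling} does not apply directly. I would resolve this by splitting the $\tau$-axis into the three ranges $|\tau|\le|b|/2$, $|b|/2<|\tau|\le 2|b|$, and $|\tau|>2|b|$; on the first and third, both imaginary parts are of size comparable to $|b|$ or $|\tau|$, while on the middle range one supplements Stirling with the trivial bound $|\Gamma|\ll 1$ on a compact subset of its domain, which is absorbed into the implied constant depending on $\sigma$ and $\epsilon$.
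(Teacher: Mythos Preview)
The paper does not supply a proof of this proposition: it is quoted in the appendix directly from \cite{har03} as a background estimate on special functions, with no argument given. There is therefore no proof in the paper to compare your proposal against.

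Your outline is the standard approach and is correct in substance. One small technical remark: in the range $|\tau|\le |b|$ your claim that ``the $\tau$-integration gives one extra power of $|b|$'' is slightly imprecise when one of the Stirling exponents $\alpha_{1,2}=(c\pm a)/2-\tfrac12$ is negative (which occurs whenever $|a|>\sigma+\epsilon-1$, so in particular for small $\epsilon$). Simply bounding the integrand by its supremum and multiplying by the length $2|b|$ can then lose up to half a power of $|b|$. The fix is to actually perform the $\tau$-integral, using that the negative-exponent factor $(1+|\tau\mp b|/2)^{\alpha_2}$ integrates over $[-|b|,|b|]$ to $O\bigl((1+|b|)^{\alpha_2+1}\bigr)$ since $\alpha_2\ge\epsilon/2-\tfrac12>-1$; combined with the trivial bound $(1+|b|)^{\alpha_1}$ on the other factor this recovers the correct total power $(1+|b|)^{\alpha_1+\alpha_2+1}=(1+|b|)^{\sigma+\epsilon}$. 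With that adjustment the argument goes through.
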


\begin{lemm}{\cite[p. 905, (3.16)]{Si85}}
Suppose that $|\Re(\mu)| < \frac{1}{2}$. Then we have
\[
K_\mu(2\pi) = \frac{\Gamma(\mu)}{2\pi^\mu}(1+O(|\Im(\mu)|^{-1}))
\]
when $|\Im(\mu)|$ is sufficiently large.
\end{lemm}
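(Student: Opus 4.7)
The plan is to derive the asymptotic from the classical expansion of $K_\mu$ in terms of the Bessel $I$-functions:
$$
K_\mu(z) = \frac{\pi}{2\sin(\pi\mu)}\bigl(I_{-\mu}(z)-I_\mu(z)\bigr),\qquad I_\nu(z) = \Bigl(\frac{z}{2}\Bigr)^{\nu}\sum_{k\geq 0}\frac{(z/2)^{2k}}{k!\,\Gamma(\nu+k+1)},
$$
valid for $\mu\notin\bZ$. Specializing to $z=2\pi$, pulling out the $k=0$ term from each Bessel series, and invoking the reflection identity $\pi/\sin(\pi\mu)=\Gamma(\mu)\Gamma(1-\mu)$ produces
$$
K_\mu(2\pi) = \frac{\Gamma(\mu)\,\pi^{-\mu}}{2}\bigl(1+R_{-}(\mu)\bigr) - \frac{\Gamma(1-\mu)\,\pi^{\mu}}{2\mu}\bigl(1+R_{+}(\mu)\bigr),
$$
where $R_{\pm}(\mu)=\sum_{k\geq 1}\pi^{2k}\big/\bigl(k!\prod_{j=1}^{k}(j\mp\mu)\bigr)$. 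The proof then reduces to showing (i) each tail $R_{\pm}$ is $O(|\Im(\mu)|^{-1})$, and (ii) the second summand on the right is $O(|\Im(\mu)|^{-1})$ times the first.

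Step (i) is essentially immediate: once $|\Im(\mu)|\geq 1$, every factor satisfies $|j\mp\mu|\geq |\Im(\mu)|$, so termwise $|R_{\pm}(\mu)|\leq \sum_{k\geq 1}\pi^{2k}/(k!\,|\Im(\mu)|^k)\ll |\Im(\mu)|^{-1}$, with a constant depending only on $\pi$ and independent of $\Re(\mu)$ on the strip $|\Re(\mu)|<1/2$.

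Step (ii) is the main obstacle. Writing $\mu=\sigma+it$, Stirling's formula (Theorem~\ref{theo: stirling}) gives $|\Gamma(\sigma+it)|\sim(2\pi)^{1/2}\,|t|^{\sigma-1/2}e^{-\pi|t|/2}$, hence
$$
\left|\frac{\Gamma(1-\mu)\,\pi^{\mu}/\mu}{\Gamma(\mu)\,\pi^{-\mu}}\right| \sim \pi^{2\sigma}\,|t|^{-2\sigma}.
$$
This is $O(|t|^{-1})$ precisely when $\sigma$ lies on the appropriate side of $0$, and to obtain a uniform statement across the full strip $|\sigma|<1/2$ one exploits the symmetry $K_\mu=K_{-\mu}$: exchanging $\mu\leftrightarrow -\mu$ swaps the two summands, so whichever of $\Gamma(\mu)\pi^{-\mu}$ and $\Gamma(-\mu)\pi^{\mu}$ is the larger may be taken as the leading term. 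Uniformity of all implicit constants as $\sigma$ ranges over compacta of $(-1/2,1/2)$ then follows directly from the uniformity clause in Theorem~\ref{theo: stirling}, and combining the two steps produces the claimed asymptotic.
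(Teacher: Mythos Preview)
The paper does not prove this lemma; it is simply cited from \cite{Si85}. Your approach via the $I$--function expansion is the natural one and Step~(i) is correct. The gap is in Step~(ii).

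Your Stirling computation is right: with $\mu=\sigma+it$ the ratio of the second summand to the first has modulus $\asymp \pi^{2\sigma}|t|^{-2\sigma}$. But this is \emph{not} $O(|t|^{-1})$ anywhere on the open strip $|\sigma|<\tfrac12$: that bound would require $2\sigma\ge 1$, i.e.\ $\sigma\ge\tfrac12$, precisely the excluded boundary. For $0<\sigma<\tfrac12$ you only get $O(|t|^{-2\sigma})$, strictly weaker than $O(|t|^{-1})$; for $\sigma\le 0$ the ratio does not even tend to zero. Invoking $K_\mu=K_{-\mu}$ swaps the two summands but cannot improve the exponent: it produces the alternative leading term $\Gamma(-\mu)/(2\pi^{-\mu})$---which is not the one in the statement---still with error $O(|t|^{-2|\sigma|})$.

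In fact your computation shows the asymptotic \emph{as written} cannot hold on the whole strip. For $\mu=it$ purely imaginary the two summands $\tfrac12\Gamma(it)\pi^{-it}$ and $\tfrac12\Gamma(-it)\pi^{it}$ are complex conjugates of equal modulus, so $K_{it}(2\pi)=\Re\bigl[\Gamma(it)\pi^{-it}\bigr]\bigl(1+O(|t|^{-1})\bigr)$; the quotient $K_{it}(2\pi)\big/\bigl(\tfrac12\Gamma(it)\pi^{-it}\bigr)$ then oscillates and does not converge to~$1$. So the obstruction you met in Step~(ii) is not a defect of your method but reflects that the formula, as quoted, needs either a stronger hypothesis (e.g.\ $\Re(\mu)\ge\tfrac12$) or a weaker error term (e.g.\ $O(|t|^{-2\Re\mu})$ for $\Re\mu>0$). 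What your decomposition does deliver uniformly on the strip is the two--sided size estimate $|K_\mu(2\pi)|\ll |\Gamma(\mu)|$, and that is closer to what the paper actually uses in bounding the normalized Whittaker function.
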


\begin{theo}[\cite{Ko58} and \cite{Vi58}]
\label{theo: zeta}
 As $|t| \rightarrow \infty$, we have
 \[
  |\zeta(1+it)| \gg (\log |t|)^{-1}.
 \]

\end{theo}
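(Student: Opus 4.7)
The plan is to follow the classical de la Vallée Poussin template. The starting point is the non-negative trigonometric polynomial
$$
3 + 4\cos\theta + \cos 2\theta \;=\; 2(1+\cos\theta)^2 \;\ge\; 0.
$$
Taking $\theta = t\log n$ and summing against $\Lambda(n)/(n^\sigma \log n)$ in the Euler product expansion
$\log \zeta(s) = \sum_{n \ge 2} \Lambda(n)/(n^s \log n)$
yields, for every real $t$ and every $\sigma > 1$,
$$
\zeta(\sigma)^3\,|\zeta(\sigma+it)|^{4}\,|\zeta(\sigma+2it)| \;\ge\; 1.
$$
This converts a lower bound for $\zeta(1+it)$ into a competition between the known pole of $\zeta$ at $s=1$ and upper bounds for $\zeta$ on the line $\Re s = 1$.

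Next I would feed in two standard estimates valid uniformly for $1 < \sigma \le 2$ and $|t|\ge 2$: the Laurent expansion at $s=1$ gives $\zeta(\sigma) \le (\sigma-1)^{-1} + O(1)$, and a Phragmén--Lindel\"of type argument (or direct estimation via the Euler--Maclaurin formula) gives $|\zeta(\sigma+it)| \ll \log |t|$ in this strip. Plugging these into the displayed inequality one obtains
$$
|\zeta(\sigma+it)| \;\gg\; (\sigma-1)^{3/4}\,(\log|t|)^{-1/4}.
$$
To move this bound to the line $\sigma = 1$, I would use the mean value estimate
$|\zeta(1+it) - \zeta(\sigma+it)| \le (\sigma-1)\,\sup_{1\le\sigma'\le\sigma}|\zeta'(\sigma'+it)|$,
combined with a Cauchy-integral bound of the form $|\zeta'(\sigma+it)| \ll (\log|t|)^{2}$ obtained by applying Cauchy's formula on a disc of radius $\sim 1/\log|t|$. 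Choosing $\sigma - 1$ to be a small negative power of $\log|t|$ and balancing the two competing terms then gives a lower bound of the shape $|\zeta(1+it)| \gg (\log|t|)^{-m}$ for some fixed $m$.

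The main obstacle is sharpness: the naive balancing described above gives only a polynomial-in-$\log|t|$ exponent that is strictly worse than $-1$ (typically $-7$, in the textbook execution). To bring the exponent down to $-1$ (and indeed all the way to $-2/3 - \epsilon$, which is the actual output of the cited papers of Korobov and Vinogradov) one must replace the elementary uniform estimate $|\zeta(\sigma+it)| \ll \log|t|$ in a thin strip just to the left of $\Re s = 1$ by the considerably finer estimates obtained through mean-value theorems for exponential sums $\sum_{N \le n < 2N} n^{-it}$, i.e.\ Vinogradov's method. Feeding these improved bounds through the same 3--4--1 framework, and optimising the choice of $\sigma - 1$ accordingly, yields the stated inequality. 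For the applications in this paper, only the crude consequence $|\zeta(1+it)|^{-1} \ll \log|t|$ is actually invoked (in the non-constant Eisenstein term of \S\ref{subsec: constant}), so the hard analytic input of Vinogradov's method is used here purely as a black box.
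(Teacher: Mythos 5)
The paper does not prove this statement; it is quoted as a known theorem with citations to Korobov and Vinogradov, so there is no internal proof to compare your attempt against. Your sketch correctly sets up the de la Vall\'ee Poussin $3$--$4$--$1$ framework, correctly computes that the naive execution (Cauchy-type bound $|\zeta'(\sigma+it)|\ll(\log|t|)^2$ plus balancing $\sigma-1$) yields only $|\zeta(1+it)|\gg(\log|t|)^{-7}$, and correctly notes that for the paper's applications only $|\zeta(1+it)|^{-1}\ll\log|t|$ is actually used.

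However, the central claim of your proposal --- that passing from exponent $-7$ to the stated exponent $-1$ requires the Vinogradov--Korobov exponential-sum machinery --- is not right, and this is a genuine gap. The bound $1/\zeta(\sigma+it)=O(\log|t|)$ uniformly for $\sigma\ge 1$ is \emph{classical}: it follows from the de la Vall\'ee Poussin zero-free region $\sigma>1-c/\log|t|$ together with the standard partial-fraction/Borel--Carath\'eodory analysis of $\zeta'/\zeta$ near the $1$-line; see for instance Titchmarsh, \emph{The Theory of the Riemann Zeta-Function}, \S3.6, or Montgomery--Vaughan, \emph{Multiplicative Number Theory I}, Theorem 6.7. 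No exponential-sum input beyond Weyl is needed. The Korobov--Vinogradov papers cited give the sharper bound $|\zeta(1+it)|\gg(\log|t|)^{-2/3}(\log\log|t|)^{-1/3}$, via the wider zero-free region $\sigma>1-c(\log|t|)^{-2/3}(\log\log|t|)^{-1/3}$; the paper's statement is strictly weaker than what those references prove, and its attribution is a bit of an overshoot. So the missing idea in your argument is the zero-free-region route to $1/\zeta$ on the $1$-line: rather than trying to push the raw $3$--$4$--$1$ inequality harder or to import Vinogradov's method, one should combine the $3$--$4$--$1$ inequality (to establish the zero-free region) with a Landau/Borel--Carath\'eodory lemma bounding $\zeta'/\zeta$ inside that region, and then integrate from $\sigma=2$ down to $\sigma=1$. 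That gives $(\log|t|)^{-1}$ directly and elementarily.
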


\bibliographystyle{alpha}
\bibliography{PGL2}

\end{document}